\documentclass[a4paper]{article}

\usepackage[left=3cm, right=3cm, top=3cm, bottom=3cm]{geometry}

\usepackage[noadjust]{cite}
\usepackage{amsmath,amssymb,amsfonts,amsthm}
\usepackage{algorithmic}
\usepackage{graphicx}
\usepackage{textcomp}
\usepackage{stmaryrd}
\usepackage{mathtools}
\usepackage{xcolor}
\usepackage{mathtools}
\usepackage{hyperref}
\usepackage{mathrsfs}
\usepackage{bbm}
\usepackage{stmaryrd}
\usepackage{pgf}
\usepackage{xcolor}
\usepackage{mathtools}
\usepackage{hyperref}
\usepackage{bbold}
\usepackage{changepage}
\usepackage{csquotes}
\usepackage{array}
\usepackage{enumitem}
\usepackage[normalem]{ulem}

\usepackage{tikz}
\usetikzlibrary{arrows.meta}

\newcommand{\N}{\mathbb{N}}
\newcommand{\C}{\mathbb{C}}

\newcommand{\R}{\mathbb{R}}

\newtheorem{theorem}{Theorem}[section]
\newtheorem{lemma}[theorem]{Lemma}
\newtheorem{corollary}[theorem]{Corollary}
\newtheorem{proposition}[theorem]{Proposition}

\newtheorem{conjecture}[theorem]{Conjecture}
\theoremstyle{definition}
\newtheorem{definition}[theorem]{Definition}
\newtheorem{remark}[theorem]{Remark}
\newtheorem{example}[theorem]{Example}
\renewcommand{\Re}{\operatorname{Re}}
\renewcommand{\Im}{\operatorname{Im}}

\DeclareMathOperator{\Var}{Var}
\DeclareMathOperator{\I}{Id}
\DeclareMathOperator{\id}{id}

\renewcommand{\Re}{\operatorname{Re}}
\renewcommand{\Im}{\operatorname{Im}}
\DeclareMathOperator{\cl}{cl}
\DeclarePairedDelimiter{\norm}{\lVert}{\rVert}
\DeclarePairedDelimiter{\abs}{\lvert}{\rvert}

\numberwithin{figure}{section}

\begin{document}

\makeatletter
{\def\@thefnmark{}\def\hyper@@anchor{}\@footnotetext{This work was supported by the French National Research Agency (ANR) as part of the ``Investments for the Future'' Program, through the ``ADI 2023'' Project funded by IDEX Paris-Saclay under Grant ANR-11-IDEX-003.}}
\makeatother

\title{Strong Stability of Linear Functional Equations with Distributed Delays}
\author{Yacine Chitour\thanks{Université Paris-Saclay, CNRS, CentraleSupélec, Laboratoire des Signaux et Systèmes, 91190 Gif-sur-Yvette, France.} \thanks{Fédération de Mathématiques de CentraleSupélec, 91190 Gif-sur-Yvette, France.} \and Felipe Gonçalves Netto\thanks{Université Paris-Saclay, CNRS, CentraleSupélec, Inria, Laboratoire des Signaux et Systèmes, 91190 Gif-sur-Yvette, France.} \thanks{Departamento de Matemática, Universidade de Brasília, Brasília--DF 70910-900, Brazil.} \and Guilherme Mazanti\footnotemark[3] \footnotemark[2]}
\date{}
\maketitle

\abstract{This paper considers linear functional equations on $\mathbb R^d$ with distributed delays defined by matrix-valued measures of bounded variation. More precisely, we are interested in providing conditions to ensure that the exponential stability of these systems is preserved under small changes of the parameters which define them. In the special case of difference equations, it is known that exponential stability is preserved under small perturbations of the matrices defining the system, but not of the delays, and an additional condition for preservation of exponential stability under perturbation of the delays is given by the Hale--Silkowski criterion (HSC). In this paper, we extend the treatment of these issues to more general systems. For that purpose, we first put forward an appropriate definition of perturbation on the delays and then propose a conjecture in the spirit of (HSC). We prove several partial results related to that conjecture.

\bigskip

\noindent\textbf{Keywords.} Time-delay systems, functional equations, difference equations, characteristic function, strong stability, exponential stability, perturbation of parameters.

\noindent\textbf{Mathematics Subject Classification (2020).} 39A30, 39B72, 39B82, 45M10.
}




\tableofcontents

\section{Introduction}

In this paper, we consider the linear functional equation with distributed delays
\begin{equation}\label{sys:general}
\Sigma_{M}:\quad x(t)=\int_{-1}^0 d M(\theta)x(t+\theta), 
\quad t\geq 0,
\end{equation}
where the unknown function $x$ takes values in $\R^d$ for a given positive integer $d$ and $M$ is a matrix-valued function defined on $[-1, 0]$ with entries having bounded variation.

System~\eqref{sys:general} can be seen as a particular type of time-delay system, as the evolution of the state $x$ after a given time $t$ depends not only on the value of $x(t)$, but also on past values of $x$, namely those on $[t-1, t]$. Time-delay systems have been extensively studied in the literature, and we refer the interested reader to \cite{Hale1993Introduction, Michiels2014Stability, Diekmann1995Delay, Bellman1963Differential, Hale1977Theory} for general expositions on this class of systems. The general form of \eqref{sys:general}, with a matrix-valued function $M$, allows one to represent several phenomena, including pointwise delays, corresponding to discontinuity points of $M$, as well as a distributed delay, corresponding to the absolutely continuous part of $M$.

Applications for linear functional equations of the form \eqref{sys:general} include both the analysis of neutral delay equations and the study of hyperbolic partial differential equations (PDEs). Indeed, it has been shown (see, e.g., \cite{Henry1974Linear, Hale2002Strong, Hale1993Introduction}) that stability properties of linear neutral functional differential equations are related to similar properties for linear functional equations of the form \eqref{sys:general} and, in particular, the exponential stability of \eqref{sys:general} is a necessary condition for the exponential stability of some more general neutral systems. As for hyperbolic PDEs, it is well known since at least the 1960s that certain one-dimensional problems can be reformulated as functional equations of the form \eqref{sys:general} (see, e.g., \cite{Auriol2019Explicit, Chitour2016Stability, Cooke1968Differential, Baratchart2021Sufficient, Bastin2016Stability, Slemrod1971Nonexistence, Miranker1961Periodic, Brayton1966Bifurcation, Brayton1967Nonlinear, Greenberg1984Effect, Chitour2021One}). This connection allows properties of the PDE to be investigated via the corresponding delay system. Moreover, when the PDE involves in-domain coupling, the resulting representation may naturally incorporate both discrete and distributed delays, with the latter expressed through integral terms over past states, as detailed in \cite{Auriol2019Explicit}.

In many practical applications, systems are subject to uncertainties arising from modeling inaccuracies or physical limitations in measurement and implementation. In this context, small variations in the parameters can significantly affect the system dynamics. Therefore, it is essential to investigate how the system behavior changes under such variations, in order to ensure robustness, stability, and satisfactory performance even under non-ideal conditions. 

To provide a more precise approach to the problem described previously in the context of robustness of stability, we must consider the asymptotic behavior of trajectories of System~\eqref{sys:general}. To define the latter, one first needs to define a state space and then ensure existence of such trajectories for all nonnegative times. In this paper, the state space is the Banach space of continuous functions defined on $[-1, 0]$ equipped with the supremum norm and satisfying a suitable compatibility condition (see \eqref{eq:compatibility} below). Then, under a mild condition on $M$ (see Proposition~\ref{prop:wellposed} and \eqref{eq:exp-estimate}), we obtain existence of solutions for all nonnegative times and also a bound for trajectories with an exponential rate independent of a particular trajectory. The least of these exponential rates is referred to as the \emph{exponential growth} associated with System~\eqref{sys:general} (see Definition~\ref{def:big-def}). The above mentioned robustness issue can now be rephrased as the continuity (in a suitable sense) of the exponential growth with respect to $M$.

These issues have been systematically addressed for the 
class of linear difference equations (cf.\ \cite{Hale1993Introduction, Avellar1980Zeros, Henry1974Linear, Hale2002Strong, Hale2003Stability, Michiels2009Strong, Chitour2016Stability, Melvin1974Stability, Cruz1970Stability, Silkowski1976Star, Avellar1990Difference}), which consists of the particular instance of System~\eqref{sys:general} defined by
\begin{equation}\label{sys:difference}
\Sigma(\tau)\colon \qquad 
x(t)=\sum_{k=1}^N A_k x(t-\tau_k),
\end{equation}
where $x(t)\in \mathbb R^d$ and, for $k \in \{1, \dotsc, N\}$, $A_k$ is a $d\times d$ matrix and $\tau_k>0$ is a delay. Note that, assuming $\tau_k \in (0, 1)$ for all $k \in \{1, \dotsc, N\}$, \eqref{sys:difference} corresponds to \eqref{sys:general} with $M$ being a suitable sum of indicator functions, namely
\begin{equation}
\label{eq:M-finite-sum}
M = \sum_{k = 1}^N A_k \mathbbm 1_{[-\tau_k, 0]}.
\end{equation}

Variations of the parameters of \eqref{sys:difference} correspond to variations in the matrices $A_1, \dotsc, A_N$ and in the delays $\tau_1, \dotsc, \tau_N$. It can be shown that, if \eqref{sys:difference} is exponentially stable, then, for small changes in the matrices, the system remains exponentially stable \cite[Section~9.6]{Hale1993Introduction}. However, the situation is different with respect to perturbation in delays, as illustrated in \cite{Silkowski1976Star} (see also \cite{Avellar1980Zeros} or \cite[Section~9.6]{Hale1993Introduction}), which provides an example in dimension $1$ showing that, for arbitrarily small changes in the delays, the system may lose the property of exponential stability. Thus, in order to preserve exponential stability under small variations in the delays (a property known as \emph{local strong stability}), one must consider additional hypotheses on the matrices $A_1, \dotsc, A_N$ and the delays $\tau_1, \dotsc, \tau_N$ than the mere exponential stability of the corresponding system \eqref{sys:difference}. The Hale--Silkowski criterion (see \cite{Silkowski1976Star, Hale1993Introduction, Avellar1980Zeros} and also Theorem~\ref{HS-Theorem} below) provides a complete answer to that issue. It establishes that, if
\begin{equation}\label{eq:HScriterion}
\max_{(\theta_1, \dotsc,\theta_N)\in [0,2\pi]^N}
\rho\!\left(\sum_{k=1}^N A_k e^{i\theta_k}\right) < 1,
\end{equation}
where $\rho(\cdot)$ denotes the spectral radius, then the system preserves exponential stability not only under small perturbations in the delays, but, surprisingly, under any perturbation in the delays (a property known as \emph{strong stability}). This criterion goes further and also asserts that condition \eqref{eq:HScriterion} is equivalent to the exponential stability of System~\eqref{sys:difference} under (small or arbitrary) delay perturbations, and to its exponential stability for one single set of rationally independent delays. As a consequence, note that robustness of exponential stability of System~\eqref{sys:difference} with respect to delay perturbations is a sole property of the matrices $A_k$.

The main goal of the present paper consists in extending these robustness results, and in particular the Hale--Silkowski criterion, to the class of systems defined in \eqref{sys:general}. For that purpose, the first step is to properly define what should be considered as a perturbation of $M$ in \eqref{sys:general}. Note that, for the particular case of \eqref{sys:difference} in which $M$ is given by \eqref{eq:M-finite-sum}, the matrices $A_1, \dotsc, A_N$ and the delays $\tau_1, \dotsc, \tau_N$ play different roles. In particular, a perturbation of $A_1, \dotsc, A_N$ yields a perturbation of $M$ in the total variation norm, but this is not the case for a variation of the delays $\tau_1, \dotsc, \tau_N$ (see Example~\ref{expl:var-delay} below). 

The first main result we prove in this paper concerns perturbations of $M$ in total variation norm, and can be seen as the counterpart of the fact that exponential stability of \eqref{sys:difference} is preserved under perturbations of the matrices $A_1, \dotsc, A_N$. In particular, we prove that, if $M$ is such that \eqref{sys:general} is exponentially stable, then \eqref{sys:general} will remain exponentially stable for all other matrix-valued functions close enough to $M$ in total variation norm (see Corollary~\ref{coro:tvstab} below), at least for an important class of functions $M$, those without a singular part.

In order to extend the Hale--Silkowski criterion to \eqref{sys:general}, one needs to generalize the notion of perturbation in delays for \eqref{sys:difference} to \eqref{sys:general}. Note that, in \eqref{sys:general}, the delay comes from the fact that the unknown function $x$ is evaluated at $t + \theta$ in the right-hand side of the equation, for $\theta \in [-1, 0]$. The variable $\theta$ can be seen as a ``distributed delay'' in \eqref{sys:general}, 
and hence a natural candidate for being a ``perturbation of the delays'' of System~\eqref{sys:general} 
consists in replacing $\theta$ by $\varphi(\theta)$, where $\varphi\colon [-1, 0] \to [-1, 0]$ is a perturbation of the identity map. System~\eqref{sys:general} is then replaced by the perturbed system given by
\begin{equation}
\label{sys:perturbed-intro}
x(t) = \int_{-1}^0 d M(\theta) x(t + \varphi(\theta)).
\end{equation}
Note that, in the case of System~\eqref{sys:difference}, System~\eqref{sys:perturbed-intro} corresponds exactly to perturbing the delays $\tau_1, \dotsc, \tau_N$ to $-\varphi(-\tau_1), \dotsc, -\varphi(-\tau_N)$. System~\eqref{sys:perturbed-intro} can also be seen as an instance of System~\eqref{sys:general} where $M$ is replaced by the pushforward by $\varphi$ of the measure associated with $M$ (see Section~\ref{sec:main-strong-stab} for more details).

The question we are interested in is thus to provide a necessary and sufficient condition on $M$ for the exponential stability of \eqref{sys:perturbed-intro} for any $\varphi$ close enough to the identity map. To proceed, the first step consists in modifying the definitions of (local) strong stability in accordance with the new concept of perturbation in the delays provided in \eqref{sys:perturbed-intro}. Then, in a second step, we generalize \eqref{eq:HScriterion} by replacing its left-hand side by the nonnegative number $\rho_{\mathrm{HS}}(M)$ given by
\begin{equation}
\label{eq:rhoHS-intro}
\rho_{\mathrm{HS}}(M) = \sup_{\xi}  \rho \left(\int_{-1}^0 e^{i\xi(\theta)}dM(\theta)\right),
\end{equation}
where the supremum is taken over Borel-measurable functions $\xi\colon [-1, 0] \to \mathbb R$. Note that the integral in the above expression reduces to the sum in \eqref{eq:HScriterion} when $M$ is given by \eqref{eq:M-finite-sum}, with $\theta_k = \xi(-\tau_k)$.

We can now state the following conjecture: $\rho_{\mathrm{HS}}(M) < 1$ if and only if strong stability of System~\eqref{sys:general} (local or not) holds true. This conjecture was shown to be true in the scalar case in \cite{GoncalvesNetto2025Strong}, in which case $\rho_{\mathrm{HS}}(M)$ reduces to the total variation of $M$. In this paper, we provide a partial result towards the proof of this conjecture in any dimension $d$, by showing (see Theorem~\ref{GHS-Theorem}) that $\rho_{\mathrm{HS}}(M) < 1$ is a necessary and sufficient condition for strong stability (local or not) of System~\eqref{sys:general} \emph{with uniform rate}, a slightly stronger notion (see Definition~\ref{def:stron-stab-UR}). As a consequence, we also obtain that $\rho_{\mathrm{HS}}(M) < 1$ is a sufficient condition for (local) strong stability, and that the latter, in turns, implies $\rho_{\mathrm{HS}}(M) \leq 1$ (see the diagram in Figure~\ref{fig:results}). As our strategy relies partly on spectral methods, we restrict our attention only to functions $M$ which satisfy the spectrum-determined growth condition (see Definition~\ref{def:big-def} below), a condition that is satisfied in particular for functions $M$ without singular part.

The paper is organized as follows. Section~\ref{sec:preliminaries} introduces the main notations used in the paper, recalls a well-posedness result for System~\eqref{sys:general}, and provides the main definitions used for the sequel. Section~\ref{sec:tv-norm} is interested in the problem of robustness of the exponential stability of System~\eqref{sys:general} with respect to perturbations of $M$ in total variation norm. In Section~\ref{sec:strong-stability}, we consider the perturbations of System~\eqref{sys:general} given by \eqref{sys:perturbed-intro}. We first recall the classical Hale--Silkowski criterion for System~\eqref{sys:difference}, before providing the definition (local) strong stability for System~\eqref{sys:general} and stating and proving our main result on this problem, Theorem~\ref{GHS-Theorem}. Appendix~\ref{sec:appendix} completes the paper with some useful measure-theoretical or technical results.

\section{Notations, definitions, and basic results}\label{sec:preliminaries}

\subsection{Elementary notations}

Let us provide some notation used throughout this paper. The set of nonnegative and positive real numbers are denoted by $\mathbb R_+$ and $\mathbb R_+^\ast$, respectively. We denote by $\mathcal M_d(\mathbb R)$ the set of all $d \times d$ real matrices, and by $\I$ the identity matrix in $\mathcal M_d(\mathbb R)$. The symbol $\lvert \cdot \rvert$ stands for an arbitrary norm on $\mathbb R^d$ (also used for the absolute value of numbers in $\R$ and $\C$), while $\lVert \cdot \rVert$ denotes the corresponding induced matrix norm on $\mathcal M_d(\mathbb R)$. For a function $f$ defined on a subset of the real line, the one-sided limits at a point $a$, when they exist, are written as $f(a^+)$ (right limit) and $f(a^-)$ (left limit). The indicator function of a set $A$ is denoted by $\mathbbm{1}_A$, and the identity map on a given set is denoted by $\id$. The space of continuous functions taking values in $\R^d$ and defined on the interval $[a,b]$ is denoted by $\mathcal C([a, b], \R^d)$, and $\lVert \cdot \rVert_\infty$ stands for the supremum norm on this space. Finally we use $\mathcal{B}_{[a,b]}$ for the Borel sets contained in $[a,b]$.

Although we consider in this paper real solutions of \eqref{sys:general} only, due to our use of spectral methods, we also need to deal with complex vectors and matrices. Given a norm $\abs{\cdot}_{\C^d}$ on $\C^d$ and its restriction $\abs{\cdot}_{\R^d}$ on $\R^d$, it is easy to find examples so that the respective induced matrix norms on $\mathcal{M}_d(\C)$ and $\mathcal M_d (\R)$ do not coincide for real matrices. For this reason, given an arbitrary norm $\abs{\cdot}_{\R^d}$ in $\R^d$, we extend this norm to the space $\C^d$ by setting, for $x \in \C^d$,
\[\abs{x}_{\C^d}=\max_{z\in \C, \abs{z}\leq 1} \abs{\Re(zx) }_{\R^d},\]
where the real part of a complex vector should be understood as the componentwise real part. This is indeed an extension because $\abs{\Re(zx)}_{\R^d}=\abs{\Re(z)}\abs{x}_{\R^d}$ for $x\in \R^d$. Consider $\norm{\cdot}_{\mathcal M_d(\R)}$ and $\norm{\cdot}_{\mathcal M_d(\C)}$, the induced matrix norms by the vector norms $\abs{\cdot}_{\R^d}$ and $\abs{\cdot}_{\C^d}$, respectively. Then one can check that $\norm{A}_{\mathcal M_d(\C)}=\norm{A}_{\mathcal M_d(\R)}$ for every $A \in \mathcal M_d (\R)$. From now on, we use $\abs{\cdot}$ to denote both $\abs{\cdot}_{\R^d}$ and $\abs{\cdot}_{\C^d}$ and we use $\norm{\cdot}$ to denote both $\norm{\cdot}_{\mathcal M_d(\R)}$ and $\norm{\cdot}_{\mathcal M_d(\C)}$.

\subsection{Main system and its well-posedness}

In this paper, we consider systems defined in \eqref{sys:general} where the entries of $M$ have bounded variation and are 
assumed to be continuous from the right on $(-1,0)$, and the integral in the right-hand side of \eqref{sys:general} is taken in the Lebesgue--Stieltjes sense. Note that the lower bound of integration in \eqref{sys:general} being equal to $-1$ is just a normalization convention, and the results of this paper also apply to systems where $-1$ is replaced by $-r$ for some $r > 0$ by a standard time rescaling.

Let us start by providing the definition of solution of System~\eqref{sys:general} that we will use in this paper.

\begin{definition}
A function $x$ is said to be a \emph{local solution} of \eqref{sys:general} if there is $\delta > 0$ such that $x\in \mathcal C([-1,\delta), \R^d)$ and $x$ satisfies \eqref{sys:general} for all $t \in [0, \delta)$. We say that $x$ is a \emph{global solution} of \eqref{sys:general} if it satisfies the definition of local solution with $\delta = +\infty$. In both cases, the restriction of $x$ to the interval $[-1, 0]$ is called the \emph{initial condition} of $x$.
\end{definition}

As usual with time-delay systems, given a function $x \in \mathcal C([-1, \delta), \R^d)$ for some $\delta \in \R_+^\ast \cup \{+\infty\}$ and $t \in [0, \delta)$, the \emph{history function} of $x$ at time $t$ is the function $x_t \in \mathcal C([-1, 0], \R^d)$ defined by
\[
x_t(\theta) = x(t+\theta), \quad \theta \in [-1, 0].
\]
With this notation, the initial condition of a solution $x$ of \eqref{sys:general} is simply $x_0$. 


Note that, if $x$ is a solution of \eqref{sys:general}, then the initial condition $x_0$ necessarily satisfies
\begin{equation}
\label{eq:compatibility}
x_0(0) = \int_{-1}^0 d M(\theta) x_0(\theta).
\end{equation}
We will thus consider in the sequel only initial conditions in the set $\mathcal C_0$ defined by
\[
\mathcal C_0 = \left\{\eta \in \mathcal C([-1, 0], \R^d) : \eta(0) = \int_{-1}^0 d M(\theta) \eta(\theta)\right\}.
\]
Moreover, if $x$
is a solution of \eqref{sys:general}, then $x_t \in \mathcal C_0$ for every $t \geq 0$ in the interval of definition of $x$.

System~\eqref{sys:general} involves the integral of a continuous function $x$ with respect to a matrix-valued function $M$, which can be understood as in the usual Riemann--Stieltjes sense, but also as the integral with respect to a matrix-valued measure associated with $M$, in the usual Lebesgue--Stieltjes sense. Later, we will deal with functions that may not be integrable in the Riemann--Stieltjes sense, making it necessary to consider integrals in the Lebesgue--Stieltjes sense. The use of these two notions of integration is consistent, since if a function is integrable in the Riemann–Stieltjes sense, then it is also integrable in the Lebesgue–Stieltjes sense, and the two integrals coincide. As both such interpretations are useful for our analysis, we next detail these points of view, starting by providing the definitions of total variation of a matrix-valued function and of a matrix-valued Borel measure.

\begin{definition}
\label{def:variations}
Given a matrix-valued function $M\colon [a,b] \to \mathcal M_d (\R)$ and a matrix-valued Borel measure $\mu\colon \mathcal{B}_{[a,b]}\to \mathcal M_d (\R)$ (i.e., each entry is a Borel signed measure), we define the \emph{total variation} of $M$ and $\mu$ in $[a, b]$ as
\[\Var (M)_{\rvert[a,b]}=\sup\left\{\sum_{i=1}^k \norm{M(t_{i+1})-M(t_{i})}: k\in \N \text{ and }a=t_1<t_2<\dotsb<t_{k+1}=b \right\} \]
and
\[\norm{\mu}([a,b])=\sup\left\{\sum_{i=1}^k \norm{\mu(E_i)}: k\in \N, E_1, \dotsc,E_{k}\in \mathcal{B}_{[a,b]} \text{ are disjoint and} \bigcup_{j=1}^{k}E_j =[a,b]\right\}, \]
respectively.
We say that $M$ is of \emph{bounded variation} in $[a, b]$ if $\Var (M)_{\rvert[a, b]} < +\infty$.
\end{definition}

\begin{definition}
A matrix-valued function of bounded variation $M\colon [a,b]\to \mathcal M_d (\R)$ is called \emph{normalized} if $M(a)=0$ and $M$ is continuous from the right on $(a,b)$. The space of normalized bounded variation functions is denoted by $\mathrm{NBV}([a,b],\mathcal M_d (\R))$.
\end{definition}

\begin{remark}\label{remk:M-mu}
There is a one-to-one correspondence (see e.g.\ \cite[Appendix~I]{Diekmann1995Delay}) between normalized bounded variation functions $M\colon [a,b]\to \mathcal M_d(\R)$ and matrix-valued Borel measures $\mu_M$ with finite total variation, expressed by
\[M(t)=\mu_M([a,t]), \quad t \in (a, b].\]
In addition, as shown in Proposition~\ref{proposition:tvcoincides} in Appendix~\ref{app:measure}, we have 
$\Var(M)_{\rvert [a, b]} = \norm{\mu_M}([a, b])$. Note also that, from the results in \cite[Appendix~I]{Diekmann1995Delay}, thanks to the normalization $M(a)=0$ for $M\in \mathrm{NBV}([a,b],\mathcal M_d (\R))$, we have that $\norm{\cdot}_{TV}:=\Var (\cdot)_{\rvert[a, b]}$ is a norm in the space $\mathrm{NBV}([a,b],\mathcal M_d (\R))$, which renders it a Banach space.

We also remark that, in the context of \eqref{sys:general}, the normalization of $M$ is not a restriction, since the integral in \eqref{sys:general} does not change if a constant is added to $M$ or if $M$ is modified in all its discontinuity points in $(-1, 0)$ in order to become right-continuous.
\end{remark}

\begin{remark}
Given a matrix-valued Borel measure of finite total variation $\mu \colon \mathcal{B}_{[a,b]}\to \mathcal M_d(\R)$, applying \cite[Theorem~19.61]{Hewitt1975Real} componentwise, we can decompose  $\mu$ as 
    \[\mu=\mu_{ac}+\mu_{sc}+\mu_d,\]
    where $\mu_{ac}$ is absolute continuous with respect to the Lebesgue measure, $\mu_{sc}$ is continuous and singular with respect to the Lebesgue measure and $\mu_d$ is a discrete measure. In view of Remark~\ref{remk:M-mu}, the function $M$ associated with $\mu$ is decomposed as
    \[M(t)=N(t)+S(t)+D(t),\quad t\in [a,b],\]
    where $N$ is absolutely continuous, $S$ is continuous and satisfies $S^\prime(t) = 0$ for a.e.\ $t \in [-1, 0]$, and $D$ is a countable sum of indicator functions multiplied by matrices. The functions $N$, $S$, and $D$ are called the \emph{absolutely continuous}, \emph{singular}, and \emph{purely discontinuous} parts of $M$, respectively.
\end{remark}

In the sequel, given $M \in \mathrm{NBV}([a, b], \mathcal M_d(\mathbb R))$, we denote by $\mu_M$ the unique Borel measure associated with $M$ in the sense of Remark~\ref{remk:M-mu}, or simply by $\mu$ when $M$ is clear from the context. In addition, when the interval $[a, b]$ is clear from the context, we denote $\Var (M)_{\rvert[a, b]}$ simply by $\Var M$.

The next result, taken from \cite{GoncalvesNetto2025Strong}, provides a sufficient condition for System~\eqref{sys:general} to be well-posed.

\begin{proposition}[{\cite[Proposition~1]{GoncalvesNetto2025Strong}}]\label{prop:wellposed}
Let $M\colon [-1, 0] \to \mathcal M_d(\mathbb R)$ be of bounded variation, define $A_M = M(0) - M(0^-)$, and suppose that $\det (\I-A_M) \ne 0$. Then, for every $\phi\in \mathcal C_0$, there exists a unique global solution $x\colon [-1,+\infty)\to \R^d$ of \eqref{sys:general} with initial condition $x_0 = \phi$.
\end{proposition}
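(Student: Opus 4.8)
The plan is to reduce \eqref{sys:general} to a Volterra-type fixed-point equation and then close it by a contraction argument whose step length is uniform, so that global existence and uniqueness follow by continuation. The starting point is to isolate the atom of the measure $\mu_M$ at $\theta = 0$. Since $\mu_M(\{0\}) = M(0) - M(0^-) = A_M$, one splits the integral in \eqref{sys:general} as
\[
\int_{-1}^0 dM(\theta)\, x(t+\theta) = A_M\, x(t) + \int_{[-1,0)} dM(\theta)\, x(t+\theta),
\]
so that \eqref{sys:general} becomes $(\I - A_M)x(t) = \int_{[-1,0)} dM(\theta)\, x(t+\theta)$. Using the hypothesis $\det(\I - A_M) \neq 0$, this is equivalent to
\[
x(t) = (\I - A_M)^{-1} \int_{[-1,0)} dM(\theta)\, x(t+\theta), \qquad t \geq 0,
\]
whose right-hand side now depends only on the values $x(s)$ for $s \in [t-1, t)$, that is, on the strict past of $t$.

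First I would establish local existence and uniqueness on a short interval $[0, \varepsilon]$. Given $\phi \in \mathcal C_0$, I would define on $\mathcal C([0, \varepsilon], \R^d)$ the operator $\mathcal T$ sending $x$ to the right-hand side above, with $x(t+\theta)$ replaced by $\phi(t+\theta)$ whenever $t + \theta < 0$. Splitting the integral at $\theta = -t$ shows that $\mathcal T x$ is a fixed term depending only on $\phi$ plus a Volterra term depending on the restriction of $x$ to $[0, t)$, and for two functions $x_1, x_2$ one gets
\[
\norm{\mathcal T x_1 - \mathcal T x_2}_\infty \leq \norm{(\I - A_M)^{-1}}\, \norm{\mu_M}\big([-\varepsilon, 0)\big)\, \norm{x_1 - x_2}_\infty .
\]
The key point is that the sets $[-\varepsilon, 0)$ decrease to the empty set as $\varepsilon \to 0^+$, so by downward continuity of the finite positive measure $\norm{\mu_M}$ one has $\norm{\mu_M}([-\varepsilon, 0)) \to 0$; hence $\varepsilon$ can be chosen, depending only on $M$, so that $\mathcal T$ is a contraction. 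The Banach fixed-point theorem then yields a unique fixed point, and the compatibility condition \eqref{eq:compatibility} guarantees $(\mathcal T x)(0) = \phi(0)$, so the fixed point glues continuously to $\phi$ at $t = 0$, producing a genuine local solution.

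I would then pass to a global solution by the method of steps. Because the same estimate holds on any interval $[\tau, \tau + \varepsilon]$ — the Volterra part there involves only the mass $\norm{\mu_M}([\tau - t, 0)) \leq \norm{\mu_M}([-\varepsilon, 0))$ — the admissible step length $\varepsilon$ is uniform, independent of the step and of the data. Repeatedly applying the local result, using the already-constructed solution on $[\tau - 1, \tau]$ as the new history, produces a unique solution on $[0, +\infty)$, and local uniqueness on each step upgrades to global uniqueness by induction.

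The routine but delicate point I expect to require care is the continuity of the map $t \mapsto \int_{[-1,0)} dM(\theta)\, y(t+\theta)$ for a continuous function $y$, needed to ensure that $\mathcal T$ genuinely maps continuous functions to continuous functions; this follows from the uniform continuity of $y$ on compact sets together with the finiteness of $\norm{\mu_M}$ via a dominated-convergence argument. The conceptually essential step, however, is the vanishing of $\norm{\mu_M}([-\varepsilon, 0))$ as $\varepsilon \to 0^+$, which makes the contraction constant small and is exactly where the absence of an atom at $\theta = 0$ in the restricted measure — secured by splitting off $A_M$ and inverting $\I - A_M$ — enters.
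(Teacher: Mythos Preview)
The paper does not prove this proposition; it is simply quoted from \cite{GoncalvesNetto2025Strong}. Your argument is correct and is the standard one for functional equations of this type: split off the atom of $\mu_M$ at $\theta=0$, use the invertibility of $\I-A_M$ to rewrite \eqref{sys:general} in fixed-point form depending only on the strict past, and obtain a contraction on a short interval via the estimate with constant $\norm{(\I-A_M)^{-1}}\,\norm{\mu_M}\big([-\varepsilon,0)\big)\to 0$; the step length depends only on $M$, so the method of steps yields the global solution.

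One small refinement worth making explicit: to guarantee that $\mathcal T$ maps into $\mathcal C([0,\varepsilon],\R^d)$ when $\mu_M$ has atoms in $(-\varepsilon,0)$, you should work on the closed affine subset $\{x\in\mathcal C([0,\varepsilon],\R^d):x(0)=\phi(0)\}$, so that the glued history $\theta\mapsto x(t+\theta)$ (equal to $\phi$ for $t+\theta<0$) is genuinely continuous across $0$. You already note that $(\mathcal T x)(0)=\phi(0)$ follows from the compatibility condition \eqref{eq:compatibility}, so $\mathcal T$ preserves this subset and the Banach fixed-point theorem applies there without change.
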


Motivated by Proposition~\ref{prop:wellposed}, we introduce the class
\[
\mathcal W = \{M \in \mathrm{NBV}([-1, 0], \mathcal M_d(\mathbb R)) : \det (\I-A_M) \ne 0 \text{ where } A_M = M(0) - M(0^-)\}.
\]
In the sequel, with a slight abuse of notation, we will also say that a measure $\mu$ belongs to $\mathcal W$ if the function of normalized bounded variation $M$ associated with $\mu$ in the sense of Remark~\ref{remk:M-mu} belongs to $\mathcal{W}$. We shall only work from now on with System~\eqref{sys:general} under the assumption that $M \in \mathcal W$.

\subsection{Exponential behavior of solutions}

Given $M \in \mathcal W$, it is well-known (see, e.g., \cite[Section~12.3]{Hale1977Theory}) that solutions of \eqref{sys:general} satisfy an exponential bound, i.e., there exist positive constants $K,\alpha>0$ such that, for every $\phi\in \mathcal{C}_0$,
\begin{equation}
\label{eq:exp-estimate}
\norm{x_t}_\infty \leq K e^{\alpha t}\norm{\phi}_\infty,\quad  t\geq 0,
\end{equation}
where $x$ is the unique solution of \eqref{sys:general} with initial condition $\phi$. For sake of completeness, we provide a proof of \eqref{eq:exp-estimate} in Appendix~\ref{app:other}.

In order to study the stability properties of System~\eqref{sys:general}, we recall the definitions of exponential stability, exponential rate, characteristic function, and spectral abscissa.



\begin{definition}
\label{def:big-def}
Let $M \in \mathrm{NBV}([-1, 0], \mathcal M_d(\mathbb R))$.
\begin{enumerate}
\item\label{item:def:ER} For $M \in \mathcal W$, we define the \emph{exponential rate} $\omega_M$ of $M$ by
\begin{equation}
\label{eq:def-omega-M}
\begin{aligned}
\omega_M = \inf \{&\alpha \in \R: \exists K>0 \text{ s.t.\ }\norm{x_t}_\infty\leq K e^{\alpha t}\norm{\phi}_\infty, t\geq 0,\forall \phi \in \mathcal{C}_0\\
&\text{where $x$ is the unique solution of \eqref{sys:general} with } x_0=\phi\}.
\end{aligned}
\end{equation}
We say that $M$ (or $\Sigma_M$) \emph{exponentially stable} if $\omega_M < 0$.

\item We define the \emph{characteristic function} of $M$ as the function $\Delta_M \colon \mathbb C \to \mathbb C$ given, for $s \in \mathbb C$, by
\[\Delta_M(s)=\det \left(\I - \int_{-1}^0 e^{s\theta}dM(\theta)\right).\]

\item We define the \emph{spectral abscissa} $S_M$ of $M$ by
\[S_M=\sup \{\Re s : \Delta_M(s) = 0\}.\]

\item For $M \in \mathcal W$, we say that $M$ satisfies the \emph{spectrum-determined growth condition} if we have the equality
\[\omega_M = S_M.\]
\end{enumerate}
\end{definition}

A series of remarks are now in order.

\begin{remark}
\label{remk:big-remk}
\begin{enumerate}
\item Although we have defined exponential rate, characteristic function, spectral abscissa, and the spectrum-determined growth condition as properties relating to some $M \in \mathrm{NBV}([-1, 0], \mathcal M_d(\mathbb R))$, we shall often talk about these properties also for a matrix-valued measure $\mu$ on $[-1, 0]$, and in this case they should be understood as the corresponding property for the function $M$ associated with $\mu$ in the sense of Remark~\ref{remk:M-mu}.

\item As a consequence of \eqref{eq:exp-estimate}, one deduces that the set over which the infimum is computed in the definition of $\omega_M$ is nonempty, and in particular we have $\omega_M < +\infty$.

\item As the notion of exponential rate is related to solutions of \eqref{sys:general}, we require in its definition that $M \in \mathcal W$ in order to ensure the well-posedness of \eqref{sys:general} from Proposition~\ref{prop:wellposed}. However, the assumption that $M \in \mathcal W$ is not necessary in order to define the characteristic function and the spectral abscissa.

\item The inequality $S_M \leq \omega_M$ is always satisfied since, for every $s \in \mathbb C$ such that $\Delta_M(s) = 0$, denoting by $v \in \mathbb C^d$ a nonzero element in the kernel of $\I - \int_{-1}^0 e^{s\theta} dM(\theta)$, we obtain that $t \mapsto e^{s t} v$ is a ($\mathbb C^d$-valued) solution of \eqref{sys:general}, so its real and imaginary parts are $\mathbb R^d$-valued solutions of \eqref{sys:general}, and the definition of exponential rate yields that $\Re s \leq \omega_M$.

\item From \cite[Chapter~9, Theorem~4.4]{Hale1993Introduction}, it follows that a sufficient condition for a function $M \in \mathrm{NBV}([-1, 0], \mathcal M_d(\mathbb R))$ to satisfy the spectrum-determined growth condition is that the entries $M_{ij}$ of $M$ have an atom before they become constant, i.e., there is a $t_{ij} \in [-1, 0)$ with $M_{ij}(t) = M_{ij}(t_{ij}^+)$ for $t \geq t_{ij}$ and $M_{ij}(t_{ij}^-) \neq M_{ij}(t_{ij}^+)$. Note that, in this case, we necessarily have $M \in \mathcal W$.

\item\label{item:N-subset-G} Another sufficient condition for $M \in \mathrm{NBV}([-1, 0], \mathcal M_d(\mathbb R))$ to satisfy the spectrum-determined growth condition, provided in \cite[Chapter~12, Theorem~3.4]{Hale1977Theory}, is when $M$ is given by
\begin{equation}\label{eq:NS}
M(\theta) = \sum_{k=1}^{+\infty} A_k \mathbbm{1}_{[-\tau_k,0]}(\theta) + N(\theta), \quad \theta \in [-1, 0],
\end{equation}
where $\tau_k \in (0, 1)$ and $A_k\in \mathcal M_d (\R)$ for $k \in \N$ and $N$ is absolutely continuous. In this case, we also have $M \in \mathcal W$.

The discontinuity points of $M$ given by \eqref{eq:NS} are the values $-\tau_k$, $k \in \mathbb N$, which all belong to the open interval $(-1, 0)$. One can also add a discontinuity point at $-1$ in $M$ and still obtain the exact same results that we will prove in the sequel in this more general framework (namely Remark~\ref{remk:explicit-variation} and Propositions~\ref{prop:tvnosingular} and \ref{prop:positiveinf}). However, note that, since $M$ is defined only over $[-1, 0]$, such a discontinuity can only be achieved by adding a term of the form $A_0 \mathbbm 1_{(-1, 0]}$, i.e., with the indicator function of the interval $(-1, 0]$ open at $-1$.
\end{enumerate}
\end{remark}

We define the following sets of functions which will be used in the subsequent sections.
\begin{equation}
\label{eq:def-N-L-G}
\begin{aligned}
\mathcal{N} & = \{M \in \mathrm{NBV}([-1,0],\mathcal M_d (\R)) : M \text{ is given by } \eqref{eq:NS}\}, \\
\mathcal L & = \left\{M\in\mathrm{NBV}([-1,0], \mathcal M_d (\R)) : S_M < +\infty \text{ and } \inf_{\Re s \ge \alpha} \abs{\Delta_M(s)} > 0 \text{ for every } \alpha>S_M \right\}, \\
\mathcal{G} & = \{M \in \mathcal W : S_M = \omega_M \}.
\end{aligned}
\end{equation}
By Remark~\ref{remk:big-remk}.\ref{item:N-subset-G}, we have $\mathcal N \subset \mathcal G \subset \mathcal W$. In addition, we have $\mathcal N \subset \mathcal L \subset \mathcal W$. Indeed, Proposition~\ref{prop:positiveinf} in Appendix~\ref{app:other} shows the left inclusion, while, for the other one, it suffices to note that, as $s \to +\infty$ with $s \in \mathbb R$, we have $\Delta_M(s) \to \det(\I - A_M)$, with $A_M = M(0) - M(0^-)$.

With a slight abuse of notation, we will sometimes say that a measure $\mu$ belongs to one of the classes $\mathcal N$, $\mathcal L$, or $\mathcal G$ if the function of normalized bounded variation $M$ associated with $\mu$ in the sense of Remark~\ref{remk:M-mu} belongs to the same class.

\section{Continuity of the spectral abscissa in TV norm}
\label{sec:tv-norm}

In this section, we will prove that the spectral abscissa of $M$ is continuous in the total variation norm when $M$ belongs to $\mathcal L$. As a consequence, we will obtain that exponential stability of \eqref{sys:general} for some $M$ with no singular part implies exponential stability of \eqref{sys:general} for small enough perturbations of $M$ in total variation norm, still with no singular part.

We start with the following technical result relating the characteristic functions of two elements $M$ and $N$ from $\mathrm{NBV}([-1, 0], \mathcal M_d(\mathbb R))$ on a right half-plane.

\begin{lemma}\label{lemma:tvineq}
    Given $\alpha\in \R$ and $R>0$, there exists $C = C(\alpha, R) > 0$ such that
    \begin{equation}
        \sup_{\Re s \geq \alpha} \abs{\Delta_M(s)-\Delta_N(s)} \leq C \Var (M-N),
    \end{equation}
    for every $M,N\in \mathrm{NBV}([-1,0],\mathcal M_d (\R))$ such that $\Var M \leq R$ and $\Var N \leq R$.
 \end{lemma}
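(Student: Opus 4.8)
The plan is to write each characteristic function as the determinant of an affine expression in a matrix-valued integral, and then to exploit the smoothness of the determinant together with a uniform bound on the integrals. Set $P_M(s) = \int_{-1}^0 e^{s\theta}\,dM(\theta)$ and $P_N(s) = \int_{-1}^0 e^{s\theta}\,dN(\theta)$, so that $\Delta_M(s) = \det(\I - P_M(s))$ and $\Delta_N(s) = \det(\I - P_N(s))$. The first step is to control the matrices themselves. Since the total variation in Definition~\ref{def:variations} is built directly from the matrix norm $\norm{\cdot}$, the estimate $\norm{\int_{-1}^0 f\,d\mu} \le (\sup\abs{f})\,\norm{\mu}([-1,0])$ holds with no extra constant for any scalar $f$, as one sees by bounding Riemann--Stieltjes sums $\sum f(\theta_i)\,\mu(E_i)$ and passing to the limit; combined with $\Var(\cdot) = \norm{\mu_{(\cdot)}}([-1,0])$ from Remark~\ref{remk:M-mu} and Proposition~\ref{proposition:tvcoincides}, this reduces everything to controlling $\sup\abs{e^{s\theta}}$.

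The key elementary observation is that, for $\theta \in [-1,0]$ and $\Re s \ge \alpha$, one has $\abs{e^{s\theta}} = e^{\theta \Re s} \le \max(1, e^{-\alpha}) =: B(\alpha)$. Indeed, since $\theta \le 0$ the exponent $\theta\Re s$ is maximized over $\Re s \ge \alpha$ at $\Re s = \alpha$, and then $e^{\theta\alpha}$ is maximized over $\theta \in [-1,0]$ either at $\theta = 0$ (if $\alpha \ge 0$) or at $\theta = -1$ (if $\alpha < 0$). Applying the integral estimate to $M - N$, $M$, and $N$ gives the uniform bounds
\[
\sup_{\Re s \ge \alpha}\norm{P_M(s) - P_N(s)} \le B(\alpha)\,\Var(M-N), \qquad \sup_{\Re s \ge \alpha}\max\bigl(\norm{P_M(s)}, \norm{P_N(s)}\bigr) \le B(\alpha)\,R,
\]
the second using $\Var M, \Var N \le R$. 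In particular, both $\I - P_M(s)$ and $\I - P_N(s)$ remain in the fixed ball $\{A \in \mathcal M_d(\C) : \norm{A} \le 1 + B(\alpha)R\}$ for all $s$ with $\Re s \ge \alpha$.

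It then remains to use that the determinant is Lipschitz on any bounded set of matrices. I would obtain this either from multilinearity of $\det$ in the columns, writing $\det(\I - P_M) - \det(\I - P_N)$ as a telescoping sum in which one column at a time is replaced by the corresponding column of $P_N - P_M$, or equivalently by integrating the identity $\frac{d}{dt}\det\bigl((1-t)(\I - P_N) + t(\I - P_M)\bigr) = \operatorname{tr}\bigl(\operatorname{adj}(\cdots)(P_N - P_M)\bigr)$ over $t \in [0,1]$ and bounding the adjugate by a power of the matrix norm on the ball above. Either route yields a constant $L = L(d, 1 + B(\alpha)R)$, depending only on the dimension and the radius, such that $\abs{\Delta_M(s) - \Delta_N(s)} \le L\,\norm{P_M(s) - P_N(s)}$ for every $s$ with $\Re s \ge \alpha$. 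Combining with the previous step gives $\abs{\Delta_M(s) - \Delta_N(s)} \le L\,B(\alpha)\,\Var(M-N)$ uniformly in such $s$, so the claim follows with $C = L\,B(\alpha)$, which depends only on $\alpha$, $R$, and $d$.

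The estimate is essentially routine, and there is no genuine obstacle; the only point I would be careful about is keeping the Lipschitz constant $L$ \emph{uniform} in $s$. This is precisely what the second displayed bound secures: the radius $1 + B(\alpha)R$ bounding $\norm{\I - P_M(s)}$ and $\norm{\I - P_N(s)}$ is independent of $s$, so a single Lipschitz constant works along the whole half-plane $\Re s \ge \alpha$, and passing to the supremum over $s$ costs nothing.
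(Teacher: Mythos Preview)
Your proof is correct and follows essentially the same route as the paper: bound $\sup_\theta|e^{s\theta}|$ by $e^{\alpha_-}=\max(1,e^{-\alpha})$ to confine $\I-P_M(s)$ and $\I-P_N(s)$ to a fixed ball of radius $1+e^{\alpha_-}R$, then invoke the Lipschitz property of $\det$ on that ball. The only cosmetic difference is that the paper obtains the Lipschitz constant via the mean value theorem for the $C^1$ map $\det$, whereas you mention the equivalent telescoping/adjugate arguments.
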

 
\begin{proof}
Let $\alpha_-=\max\{0,-\alpha\}$ and $r = 1 + e^{\alpha_-} R$. We set $K_r=\{A\in \mathcal M_d(\R): \norm{A}\leq r\}$. Since $\det(\cdot)$ is continuously differentiable, by the mean value theorem there exists $\widetilde{C}=\widetilde{C}(r)>0$ such that 
 \begin{equation}\label{ineq:detlipschitz}
     \abs{\det (A)-\det (B)} \leq \widetilde{C} \norm{A - B}, \quad \forall A,B\in K_r. 
 \end{equation}
Note now that $\I - \int_{-1}^0 e^{s\theta}dM(\theta)$ and $\I - \int_{-1}^0 e^{s\theta}dN(\theta)$ belong to $K_r$ for all $s \in \mathbb C$ with $\Re s \ge \alpha$, since, using Proposition~\ref{proposition:tvcoincides} from Appendix~\ref{app:measure}, we have
 \[\left\lVert\I - \int_{-1}^0 e^{s\theta}dM(\theta)\right\rVert\leq \norm{\I} + \left\lVert \int_{-1}^0 e^{s\theta}dM(\theta)\right\rVert\leq 1 + \max_{\theta \in [-1,0]}\abs{e^{s\theta}} \Var M \leq 1 + e^{\alpha_-} \Var M\leq r,\]
and similarly for $N$. Using \eqref{ineq:detlipschitz} we have that, for all $s \in \mathbb C$ with $\Re s \ge \alpha$,
\[
     \abs{\Delta_M(s)-\Delta_N(s)}\leq \widetilde{C} \left\lVert \int_{-1}^0 e^{s\theta} d(N-M)(\theta) \right\rVert \leq \widetilde{C} e^{\alpha_-} \Var (M-N),
\]
and the conclusion follows with $C = \widetilde{C} e^{\alpha_-}$.
\end{proof}
    
We are now ready to prove the main result of this section.

\begin{theorem}
\label{thm:spectral-abscissa-continuous}
The spectral abscissa map $\mathcal L \ni M \mapsto S_M\in \mathbb R \cup \{- \infty\}$ is continuous on $\mathcal L$ with respect to the total variation norm. 
\end{theorem}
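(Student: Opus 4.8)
The plan is to prove continuity at a fixed $M \in \mathcal L$ by establishing upper and lower semicontinuity of $S_{(\cdot)}$ separately, both driven by Lemma~\ref{lemma:tvineq}, which bounds $\Delta_N - \Delta_M$ uniformly on any right half-plane by $\Var(N-M)$. Fix $\epsilon > 0$ and restrict attention to $N \in \mathcal L$ with $\Var(N-M) < 1$, so that $\Var N \le R := \Var M + 1$; this guarantees that Lemma~\ref{lemma:tvineq} applies with the single variation bound $R$ throughout, yielding constants that depend only on $M$ and the chosen half-plane.

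For upper semicontinuity I would exploit the defining gap condition of $\mathcal L$. When $S_M$ is finite, the choice $\alpha := S_M + \epsilon/2 > S_M$ gives $m := \inf_{\Re s \ge \alpha}\abs{\Delta_M(s)} > 0$. Lemma~\ref{lemma:tvineq} furnishes $C = C(\alpha, R)$ with $\sup_{\Re s \ge \alpha}\abs{\Delta_N - \Delta_M} \le C\,\Var(N-M)$, so once $\Var(N-M) < m/(2C)$ one has $\abs{\Delta_N(s)} \ge m/2 > 0$ for every $\Re s \ge \alpha$. Hence $\Delta_N$ has no zero in $\{\Re s \ge \alpha\}$ and $S_N \le \alpha = S_M + \epsilon/2$. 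This argument applies verbatim when $S_M = -\infty$ by taking $\alpha$ to be an arbitrary real number: it shows $S_N$ can be forced below any prescribed level for $N$ close to $M$, which is exactly continuity into $\R \cup \{-\infty\}$ at such $M$ (neighborhoods of $-\infty$ being the rays $[-\infty, a)$), and in that case nothing further is needed.

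For lower semicontinuity, required only when $S_M > -\infty$, I would localize a zero of $\Delta_M$ and invoke Rouché's theorem. Since $\mathcal L \subset \mathcal W$, we have $\Delta_M(s) \to \det(\I - A_M) \ne 0$ as $s \to +\infty$ along the reals, so $\Delta_M$ is entire and not identically zero, and its zeros are isolated. By the definition of $S_M$ as a supremum, there is a zero $s_0$ with $\Re s_0 > S_M - \epsilon/2$. I would pick $r \in (0, \epsilon/2)$ small enough that $\overline{D(s_0,r)}$ contains no other zero of $\Delta_M$, and set $m' := \min_{s \in \partial D(s_0,r)}\abs{\Delta_M(s)} > 0$. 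Applying Lemma~\ref{lemma:tvineq} with $\alpha' = \Re s_0 - r$, so that $\partial D(s_0,r) \subset \{\Re s \ge \alpha'\}$, produces a constant $C'$; then for $\Var(N-M) < m'/C'$ one gets $\sup_{\partial D}\abs{\Delta_N - \Delta_M} < m' \le \min_{\partial D}\abs{\Delta_M}$. As both $\Delta_M$ and $\Delta_N$ are entire, Rouché's theorem forces $\Delta_N$ to have a zero $s_1 \in D(s_0,r)$, and $\Re s_1 > \Re s_0 - r > S_M - \epsilon$, whence $S_N \ge S_M - \epsilon$.

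Taking $\delta$ to be the minimum of the two thresholds then gives $\abs{S_N - S_M} < \epsilon$ whenever $\Var(N-M) < \delta$, completing the proof. I expect the lower bound to be the main obstacle: it cannot be obtained from the global gap condition of $\mathcal L$, because the zeros of $\Delta_M$ may accumulate on the line $\Re s = S_M$ with unbounded imaginary part, so one must instead track the persistence of a single isolated zero under perturbation, and it is precisely the \emph{uniform} half-plane estimate of Lemma~\ref{lemma:tvineq} (as opposed to a merely pointwise or compact-set bound) that makes Rouché's theorem applicable here. The secondary points requiring care are aligning the half-plane in Lemma~\ref{lemma:tvineq} with the relevant contour, and ensuring the $-\infty$ value of the codomain is correctly absorbed into the upper-semicontinuity argument alone.
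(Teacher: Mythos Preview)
Your proposal is correct and follows essentially the same approach as the paper: both use Lemma~\ref{lemma:tvineq} together with the gap condition of $\mathcal L$ for the upper bound, and track the persistence of a single zero of $\Delta_M$ under perturbation for the lower bound. The only differences are cosmetic---you work in $\epsilon$-$\delta$ form and invoke Rouch\'e's theorem directly, whereas the paper argues sequentially and cites Hurwitz's theorem (which is just Rouch\'e applied to a uniformly convergent sequence).
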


\begin{proof}
    We only treat here the case $S_M \in \mathbb R$, since the case $S_M = -\infty$ can be treated exactly as in the first part of the proof below, up to replacing $S_M + \epsilon$ by $-K$ for an arbitrary $K > 0$.
    
    Let $M\in \mathcal L$ and $(N_n)_{n\in \N}\subset \mathcal L$ be a sequence such that $\lim_{n\to +\infty} \Var (M-N_n)=0$. Given $\epsilon>0$, by definition of $\mathcal L$ we have 
    \[\delta := \inf_{\Re s \geq S_M+\epsilon}\abs{\Delta_M(s)}>0.\]
    Since $\lim_{n\to +\infty} \Var (M-N_n)=0$, we may assume without loss of generality that $\Var N_n\leq 1+\Var M$ for every $n\in \N$. Let $\alpha = S_M + \epsilon$ and $R=1+\Var M$. By Lemma \ref{lemma:tvineq}, there exists $C(\alpha,R) > 0$ such that, for all $s \in \mathbb C$ with $\Re s \ge \alpha$,
    \[\delta-\abs{\Delta_{N_n}(s)}\leq \abs{\Delta_M(s)}-\abs{\Delta_{N_n}(s)}\leq \abs{\Delta_M(s)-\Delta_{N_n}(s)} \leq C(\alpha,R) \Var (M-N_n), \quad \forall n\in \N.\]
    Hence,
    \[\inf_{\Re s \ge S_M + \epsilon} \abs{\Delta_{N_n}(s)} \ge \delta - C(\alpha, R) \Var (M-N_n), \quad \forall n\in \N.\]
    Since $\Var (M-N_n)\to 0$ as $n\to +\infty$, there exists $n_1$ such that, for all $n\ge n_1$,
    \[\inf_{\Re s \ge S_M + \epsilon} \abs{\Delta_{N_n}(s)} > 0,\]
    and consequently $S_{N_n} \leq S_M+ \epsilon$ for every $n\ge n_1$.

   On the other hand, by the same argument as above, for $\beta=S_M-\epsilon$ and $R= 1+\Var M$ there exists $C(\beta,R)>0$ such that
    \[\sup_{\Re s \ge S_M-\epsilon } \abs{\Delta_M(s) - \Delta_{N_n}(s)} \leq C(\beta,R)\Var(M-N_n).\]
    That is, $\Delta_{N_n}(s)\to \Delta_M (s)$ as $n\to +\infty$, uniformly on $\Re s \ge S_M-\epsilon$. By the definition of $S_M$, there exists $s^*\in \C$ with $\Re s^* > S_M-\epsilon$ and $\Delta_M(s^*)=0$. Take $\eta>0$ small enough so that the ball described by $\abs{s-s^*}<\eta$ is contained in the half-plane described by the inequality $\Re s \ge S_M-\epsilon$. By Hurwitz's Theorem, there exists $n_2$ such that, for all $n\geq n_2$, $\Delta_{N_n}(\cdot)$ has at least one zero inside the ball $\abs{s - s^*} < \eta$, and hence $S_{N_n} \geq S_M-\epsilon$ for all $n\geq n_2$. Therefore, for every $n\geq \max \{n_1,n_2\}$ we have, 
    \[S_M-\epsilon \leq S_{N_n} \leq S_M+\epsilon,\]
    which implies that $\lim_{n\to +\infty}S_{N_n}=S_M$, completing the proof.
\end{proof}


We next deduce a consequence of the previous theorem for matrix-valued functions in $\mathcal L \cap \mathcal G$. 

\begin{corollary}\label{coro:tvstab}
If $M\in \mathcal L \cap \mathcal G$ is exponentially stable, then there exists $\epsilon>0$ such that $\widetilde M$ is exponentially stable for every $\widetilde M \in \mathcal L \cap \mathcal G$ with $\Var(M - \widetilde M) < \epsilon$.
\end{corollary}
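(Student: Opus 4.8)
The plan is to reduce the corollary to Theorem~\ref{thm:spectral-abscissa-continuous} (continuity of the spectral abscissa in total variation norm) combined with the spectrum-determined growth condition that defines $\mathcal G$. The whole content of the statement lies in the continuity theorem; the corollary is the translation of that continuity into a stability statement, and the only point requiring care is that the theorem controls $S_{\widetilde M}$ whereas stability is a property of $\omega_{\widetilde M}$.

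First I would use the hypotheses on $M$ to rephrase exponential stability spectrally. Since $M \in \mathcal G$, the spectrum-determined growth condition (Definition~\ref{def:big-def}) gives $\omega_M = S_M$, and exponential stability means $\omega_M < 0$; hence $S_M < 0$, where $S_M \in \mathbb R \cup \{-\infty\}$. Next, since $M \in \mathcal L$, Theorem~\ref{thm:spectral-abscissa-continuous} applies and the map $\mathcal L \ni \widetilde M \mapsto S_{\widetilde M}$ is continuous at $M$ with respect to $\Var(\cdot)$. I would then choose $\epsilon > 0$ small enough so that $S_{\widetilde M} < 0$ for every $\widetilde M \in \mathcal L$ with $\Var(M - \widetilde M) < \epsilon$: when $S_M \in \mathbb R$ this follows by taking, say, $\abs{S_{\widetilde M} - S_M} < -S_M$, and the case $S_M = -\infty$ is handled directly by the definition of continuity towards $-\infty$ used in the theorem (one simply asks that $S_{\widetilde M}$ eventually lie below any fixed negative threshold).

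Finally I would restrict to $\widetilde M \in \mathcal L \cap \mathcal G$, as in the statement. For such $\widetilde M$ the spectrum-determined growth condition again gives $\omega_{\widetilde M} = S_{\widetilde M}$, so the bound $S_{\widetilde M} < 0$ obtained in the previous step yields $\omega_{\widetilde M} < 0$, i.e.\ $\widetilde M$ is exponentially stable. This completes the argument.

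The main (and essentially only) obstacle is conceptual rather than technical: the continuity theorem lives entirely at the level of the spectral abscissa $S_{\widetilde M}$, and by itself says nothing about the exponential rate $\omega_{\widetilde M}$, which is what governs stability. It is precisely the assumption $\widetilde M \in \mathcal G$ that bridges this gap, by forcing $\omega_{\widetilde M} = S_{\widetilde M}$ for the perturbed functions as well; this is why both $M$ and the admissible perturbations $\widetilde M$ are required to lie in $\mathcal L \cap \mathcal G$.
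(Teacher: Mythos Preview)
Your proposal is correct and follows essentially the same approach as the paper: use $M\in\mathcal G$ to pass from $\omega_M<0$ to $S_M<0$, invoke Theorem~\ref{thm:spectral-abscissa-continuous} to get $S_{\widetilde M}<0$ for $\widetilde M\in\mathcal L$ close to $M$ in total variation, and then use $\widetilde M\in\mathcal G$ to conclude $\omega_{\widetilde M}=S_{\widetilde M}<0$. Your explicit handling of the case $S_M=-\infty$ and your remark on why both memberships $\mathcal L$ and $\mathcal G$ are needed are minor elaborations but do not change the argument.
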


\begin{proof}
Since $M$ is exponentially stable, we have $\omega_M < 0$.  
From the fact that $M \in \mathcal{G}$, it follows that $S_M = \omega_M < 0$. 
Hence, by Theorem~\ref{thm:spectral-abscissa-continuous}, there exists $\epsilon > 0$ such that, if $\widetilde M \in \mathcal L$ and $\Var(\widetilde{M} - M) < \epsilon$, then $S_{\widetilde{M}} < 0$. Consequently, $\widetilde{M}$ is exponentially stable for every $\widetilde{M} \in \mathcal L \cap \mathcal G$ satisfying $\Var(M - \widetilde M) < \epsilon$.
\end{proof}

\begin{remark}
\label{remk:explicit-variation}
Recalling that $\mathcal N \subset \mathcal L \cap \mathcal G$, we deduce that the previous corollary applies in particular to matrix-valued functions $M$ in $\mathcal N$, i.e., without singular part. In this case, we can explicitly compute the quantity $\Var (M - \widetilde M)$ from the statement of the corollary. Indeed, take $M, \widetilde M\in  \mathcal{N}$ and write
\[ M=\sum_{k=1}^{+\infty} A_k \mathbbm{1}_{[-\tau_k,0]}+N, \quad \widetilde M= \sum_{k=1}^{+\infty}  B_k \mathbbm{1}_{[-\widetilde \tau_k,0]}+\widetilde N,\]
where $A_k, \tau_k, N, B_k, \widetilde \tau_k, \widetilde N$ are as in \eqref{eq:NS}, assuming further, with no loss of generality, that $\tau_k \neq \tau_j$ and $\widetilde \tau_k \neq \widetilde \tau_j$ if $k \neq j$. We rewrite $M$ and $\widetilde M$ as 
\[M=\sum_{k=1}^{+\infty} \hat A_k \mathbbm{1}_{[-h_k,0]}+N, \quad \widetilde M=\sum_{k=1}^{+\infty} \hat B_k \mathbbm{1}_{[-h_k,0]}+\widetilde N,\]
where $\{h_k:k\ge 1\}=\{\tau_k:k\ge 1\} \cup \{\widetilde\tau_k: k\ge 1\}$ and
\[\hat A_k = \begin{cases}
    A_j, \text{ if } h_k = \tau_j \text{ for some $j\ge 1$,}\\
    0, \text{ otherwise},
\end{cases} \hat B_k = \begin{cases} B_j, \text{ if } h_k = \widetilde\tau_j \text{ for some $j\ge 1$,}\\
    0, \text{ otherwise}.
\end{cases}\]
So, from Proposition \ref{prop:tvnosingular} in Appendix~\ref{app:measure}, we have
\[\Var (M - \widetilde M) =  \sum_{k=1}^{+\infty} \norm{\hat A_k - \hat B_k } + \int_{-1}^0 \norm{N'(\theta)-\widetilde N' (\theta)}d\theta. \]
\end{remark}

Despite the interest of Corollary~\ref{coro:tvstab}, it cannot be applied to the case where $M$ is purely discontinuous and one perturbs the discontinuity points of $M$, as the total variation norm may increase discontinuously under such perturbations. This is illustrated by the following example.
 
\begin{example}
\label{expl:var-delay}
Let $0<\tau_1<\tau_2<1$, $A$ be a non zero element of $\mathcal M_d (\R)$ and $M,N\colon [-1,0] \to \mathcal M_d (\R)$ given by $M=A\mathbbm{1}_{[-\tau_1,0]}$ and $N=A\mathbbm{1}_{[-\tau_2,0]}$. Then
\[\Var(M-N)=2\norm{A}.\]
In this way, no matter how close $\tau_1$ is to $\tau_2$, $N$ is at a constant distance of $2 \norm{A}$ from $M$ in the sense of total variation, and in particular Corollary~\ref{coro:tvstab} does not guarantee the exponential stability of $N$ given the exponential stability of $M$.
\end{example}

The above example shows that the total variation norm is not adapted to understand if exponential stability is preserved or not with respect to perturbations in delays. That fact motivates the next section.

\section{Strong stability}
\label{sec:strong-stability}

\subsection{Strong stability of \texorpdfstring{\eqref{sys:general}}{(\ref{sys:general})} and statement of the main result}
\label{sec:main-strong-stab}

In the special case where $M$ is given by \eqref{eq:M-finite-sum} with $A = (A_1, \dotsc, A_N)\in \mathcal M_d(\R)^N$ and $\tau = (\tau_1, \dotsc, \tau_N)\in (0, 1)^N$, \eqref{sys:general} becomes the system $\Sigma(\tau)$ from \eqref{sys:difference}. As recalled in the introduction, this case was extensively studied in the literature, including the question of robustness of exponential stability with respect to perturbations in the delays. In that sense, we recall the following definition from \cite{Hale1993Introduction}.

\begin{definition}[Strong stability, \cite{Hale1993Introduction}]\label{def:strong-stab-delays}
Let $A=(A_1,\allowbreak \dotsc,\allowbreak A_N)\in \mathcal M_d (\R)^N$ and $\tau=(\tau_1,\allowbreak \dotsc,\allowbreak\tau_N) \in (0, 1)^N$.
\begin{enumerate}
\item We say that $\Sigma(\tau)$ is \emph{locally strongly stable} if there exists $\epsilon>0$ such that $\Sigma (\gamma)$ is exponentially stable for every $\gamma\in (0, 1)^N$ with $\lvert\gamma - \tau \rvert<\epsilon$.
\item We say that $\Sigma(\tau)$ is \emph{strongly stable} if $\Sigma(\gamma)$ is exponentially stable for every $\gamma\in (0, 1)^N$.
\end{enumerate}
\end{definition}

The following theorem gives a characterization of strong stability.

\begin{theorem}[Hale--Silkowski Criterion, \cite{Avellar1980Zeros, Silkowski1976Star, Hale1977Theory, Hale1993Introduction}]
\label{HS-Theorem}
Let $A=(A_1, \dotsc,A_N)\in \mathcal M_d (\R)^N$, $\widetilde \tau=(\widetilde \tau_1, \dotsc,\widetilde \tau_N)\in (0, 1)^N$ with rationally independent components, and $\tau=(\tau_1, \dotsc,\tau_N)\in (0, 1)^N$. The following statements are equivalent.

\begin{enumerate}
    \item System~$\Sigma(\widetilde{\tau})$ is exponentially stable.
    \item The Hale--Silkowski condition from \eqref{eq:HScriterion} is satisfied.
    
    
    \item System~$\Sigma(\tau)$ is locally strongly stable.
    \item System~$\Sigma(\tau)$ is strongly stable.
\end{enumerate}
\end{theorem}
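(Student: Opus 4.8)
The statement to prove is the Hale--Silkowski criterion (Theorem~\ref{HS-Theorem}), establishing the equivalence of four conditions for the difference equation $\Sigma(\tau)$. This is a classical result, so my plan is to reconstruct its proof following the structure laid out in the literature, organizing the equivalences around the natural logical cycle. The characteristic equation of $\Sigma(\tau)$ is $\det\left(\I - \sum_{k=1}^N A_k e^{-s\tau_k}\right) = 0$, and exponential stability of $\Sigma(\tau)$ is equivalent (since difference equations of this form satisfy the spectrum-determined growth condition, being in $\mathcal{N}$) to the requirement that all characteristic roots lie in a left half-plane $\{\Re s \le -c\}$ for some $c > 0$.

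The plan is to prove the chain of implications $(2) \Rightarrow (4) \Rightarrow (3) \Rightarrow (1) \Rightarrow (2)$, which closes the cycle and yields all equivalences. First I would treat $(2) \Rightarrow (4)$: assuming the spectral-radius condition \eqref{eq:HScriterion}, I show $\Sigma(\gamma)$ is exponentially stable for \emph{every} $\gamma \in (0,1)^N$. The key observation is that for a purely imaginary candidate root $s = i\omega$, the matrix $\sum_k A_k e^{-i\omega\tau_k}$ has all its terms of the form $A_k e^{i\theta_k}$ with $\theta_k = -\omega\tau_k \in [0, 2\pi]$ (modulo $2\pi$), so condition \eqref{eq:HScriterion} forces $\rho\left(\sum_k A_k e^{-i\omega\tau_k}\right) < 1$, hence $\det(\I - \sum_k A_k e^{-i\omega\tau_k}) \neq 0$: there are no roots on the imaginary axis. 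A compactness and uniform-continuity argument over the torus then upgrades this to a uniform bound pushing all roots into a left half-plane, uniformly in $\gamma$, giving exponential stability for all $\gamma$. The implications $(4) \Rightarrow (3)$ is immediate (strong stability trivially implies local strong stability), and $(3) \Rightarrow (1)$ requires choosing a rationally independent $\widetilde\tau$ within the neighborhood guaranteed by local strong stability, which exists by density of rationally independent tuples.

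The substantive and hardest step is $(1) \Rightarrow (2)$: exponential stability for one rationally independent delay vector $\widetilde\tau$ implies the spectral-radius condition \eqref{eq:HScriterion}. This is where the rational independence is essential and is the crux of the whole theorem. I would argue by contraposition: suppose \eqref{eq:HScriterion} fails, so there exists $(\theta_1, \dotsc, \theta_N) \in [0, 2\pi]^N$ with $\rho\left(\sum_k A_k e^{i\theta_k}\right) \ge 1$, meaning this matrix has an eigenvalue $\lambda$ with $\abs{\lambda} \ge 1$. The goal is to produce characteristic roots of $\Sigma(\widetilde\tau)$ with real part arbitrarily close to (or exceeding) zero, contradicting exponential stability. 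The mechanism is the Kronecker--Weyl equidistribution theorem: because $\widetilde\tau_1, \dotsc, \widetilde\tau_N$ are rationally independent, the orbit $\{(-\omega\widetilde\tau_1, \dotsc, -\omega\widetilde\tau_N) \bmod 2\pi : \omega \in \mathbb{R}\}$ is dense in the torus $[0,2\pi]^N$. Hence one can find a sequence $\omega_n \to \infty$ such that $e^{-i\omega_n\widetilde\tau_k} \to e^{i\theta_k}$ for each $k$, so the matrices $\sum_k A_k e^{-i\omega_n \widetilde\tau_k}$ converge to $\sum_k A_k e^{i\theta_k}$, whose spectral radius is at least $1$.

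The technical heart is then to convert this limiting spectral behavior into actual characteristic roots near the imaginary axis. I expect this to be the main obstacle. The idea is to track roots $s$ of $\det(\I - \sum_k A_k e^{-s\widetilde\tau_k}) = 0$ as a function near the points $i\omega_n$: one studies the perturbation of the eigenvalue $\lambda$ (with $\abs{\lambda}\ge 1$) of the limiting matrix and uses a continuity or implicit-function/Rouché argument to locate a genuine root $s_n$ with $\Re s_n$ close to $0$ (indeed $\Re s_n \to 0^-$ or with $\Re s_n \ge -\epsilon_n$ with $\epsilon_n \to 0$), so that $S_{\widetilde\tau} \ge 0$, contradicting exponential stability. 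Handling the case $\abs{\lambda} > 1$ is easier (roots strictly in the right half-plane via an intermediate-value/degree argument), while the boundary case $\abs{\lambda} = 1$ requires more care and is where one genuinely uses that a strict inequality $\rho < 1$ fails, together with the equidistribution giving approximation from the correct side. For a clean self-contained treatment I would either invoke the known analysis of the zero set of such exponential polynomials (citing \cite{Silkowski1976Star, Avellar1980Zeros, Hale1993Introduction}) or carry out the Rouché argument explicitly on small disks around the approximating points.
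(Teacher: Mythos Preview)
The paper does not prove Theorem~\ref{HS-Theorem}; it is stated with citations to \cite{Avellar1980Zeros, Silkowski1976Star, Hale1977Theory, Hale1993Introduction} and used as a black box. So there is no ``paper's own proof'' to compare against; what you have written is a reconstruction of the classical argument, and its outline is essentially the one found in those references.

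Two points in your reconstruction deserve tightening. First, your implication $(3)\Rightarrow(1)$ is not well-posed as written: in the theorem $\widetilde\tau$ is a \emph{given} rationally independent vector, not one you are free to choose inside the $\epsilon$-neighborhood of $\tau$. The clean fix is to route through~(2): from local strong stability at $\tau$ pick a rationally independent $\tau'$ in the neighborhood, apply your already-proven implication ``exponential stability at a rationally independent delay $\Rightarrow$~(2)'' with $\tau'$ in place of $\widetilde\tau$, and then use $(2)\Rightarrow(4)\Rightarrow(1)$ for the original $\widetilde\tau$. Second, in $(2)\Rightarrow(4)$ you only rule out roots on the imaginary axis and then appeal vaguely to ``compactness and uniform continuity'' to push roots into a left half-plane; this does not by itself exclude roots with $\Re s>0$. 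The missing ingredient is precisely the maximum-principle step (Proposition~\ref{prop:max_principle} in the paper): condition~\eqref{eq:HScriterion} over the torus implies $\rho\bigl(\sum_k z_k A_k\bigr)<1$ for all $\lvert z_k\rvert\le 1$, hence $\rho\bigl(\sum_k A_k e^{-s\tau_k}\bigr)<1$ whenever $\Re s\ge 0$, which rules out all roots in the closed right half-plane. With that in hand, your uniform-continuity argument on a thin strip $\{-\nu\le\Re s\le 0\}$ then gives the uniform negative bound. The hard direction $(1)\Rightarrow(2)$ via Kronecker density and Rouch\'e/Hurwitz is correctly sketched and is indeed where the substantive work lies.
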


\begin{remark}
The standard Hale--Silkowski criterion stated in the aforementioned references considers any delays in $\mathbb R_+^\ast$, and not only in $(0, 1)$, as we do here. As one considers only finitely many delays, it is not hard to verify that both formulations are equivalent, up to a suitable time rescaling.
\end{remark}

We now want to generalize the notion of perturbations in the delays to the more general system \eqref{sys:general}. For that purpose, define the set 
\[\mathbf{B}=\{\varphi\colon[-1,0]\to [-1,0]: \varphi \text{ is Borel-measurable}\}.\]
As stated in the introduction, our notion of perturbation of \eqref{sys:general} is System~\eqref{sys:perturbed-intro} for a perturbation $\varphi \in \mathbf B$ close enough to the identity map. Using the correspondence between matrix-valued functions $M$ of normalized bounded variation and matrix-valued Borel measures with finite total variation from Remark~\ref{remk:M-mu}, one can rewrite the perturbed system \eqref{sys:perturbed-intro} in terms of a pushforward of the measure $\mu_M$ corresponding to $M$ as
\begin{equation}
\label{sys:perturbed}
\Sigma_{\varphi_\ast\mu_M}\colon \quad x(t)=\int_{-1}^0 d(\varphi_\ast\mu_M) (\theta)x(t+\theta), \quad t\ge 0,
\end{equation}
where $\varphi_\ast\mu_M$ is the pushforward of $\mu_M$ by $\varphi\in \mathbf{B}$, i.e., the measure defined by $\varphi_\ast\mu_M(A)=\mu_M(\varphi^{-1}(A))$ for $A \in \mathcal{B}_{[-1,0]}$. Indeed, for every $x \in \mathcal C([-1, +\infty), \mathbb R^d)$ and $t\ge 0$, $x(t+\varphi(\cdot))$ is Borel-measurable and bounded, hence it is integrable with respect to $\mu_M$ and we have
\begin{equation*}
\int_{-1}^0 d(\varphi_\ast\mu_M) (\theta)x(t+\theta)=\int_{-1}^0 d\mu_M(\theta)x(t+\varphi(\theta))=\int_{-1}^0 dM(\theta)x(t+\varphi(\theta)),\quad t\ge 0.
\end{equation*}


As discussed in the introduction, in the hope of extending Theorem~\ref{HS-Theorem} for System~\eqref{sys:general}, we propose the generalization of  \eqref{eq:HScriterion} given, in terms of the quantity defined in \eqref{eq:rhoHS-intro}, by
\begin{equation}
\label{GHSC}
\rho_{\mathrm{HS}}(M) := \sup_{\xi\in \mathfrak C}  \rho \left(\int_{-1}^0 e^{i\xi(\theta)}dM(\theta)\right) < 1,
\end{equation}
where $\mathfrak C=\{\xi\colon [-1,0]\to \R : \xi\text{ is Borel-measurable}\}$.

\begin{remark}
By Proposition~\ref{proposition:tvcoincides} in Appendix~\ref{app:measure}, we immediately have from \eqref{GHSC} that $\rho_{\mathrm{HS}}(M) \leq \Var(M)$. In addition, one has equality in dimension $d = 1$ since, in that case, denoting by $P$ and $N$ the positive and negative sets of $\mu_M$ in the sense of the Hahn decomposition with $P \cup N = [-1, 0]$ and $P \cap N = \emptyset$ (see, e.g., \cite[Theorem~3.3]{Folland1999}), we have the equality $\Var(M) = \int_{-1}^0 e^{i\xi(\theta)}dM(\theta)$ for the function $\xi\colon [-1,0]\to \R$ given by $\xi(\theta) = 0$ if $\xi \in P$ and $\xi(\theta) = \pi$ if $\xi \in N$.
\end{remark}

Before continuing, we must also adapt Definition \ref{def:strong-stab-delays} of strong stability to the case of System \eqref{sys:general}.

\begin{definition}[Strong stability]
\label{def:strong-stab}
Let $M \in \mathcal G$ and $\mu$ be the matrix-valued Borel measure associated with $M$ in the sense of Remark~\ref{remk:M-mu}.
\begin{enumerate}[ref={\arabic*)}]
\item We say that $\Sigma_M$ is \emph{locally strongly stable} if there exists $\epsilon>0$ such that $\varphi_\ast \mu$ is exponentially stable for all $\varphi \in \mathbf B$ with $\lVert \varphi - \id \rVert_\infty < \epsilon$  and $\varphi_\ast \mu \in \mathcal G$.
\item We say that $\Sigma_M$ is \emph{strongly stable} if $\varphi_\ast \mu$ is exponentially stable for all $\varphi\in\mathbf{B}$ such that $\varphi_\ast \mu \in \mathcal G$.
\end{enumerate}
\end{definition}

We then conjecture the following general result.
\begin{conjecture}
\label{GHS-Conjecture}
Let $M \in \mathcal G$. Then the following statements are equivalent.
    \begin{enumerate}
        \item We have $\rho_{\mathrm{HS}}(M) <1$.
        \item System~$\Sigma_M$ is strongly stable.
        \item System~$\Sigma_M$ is locally strongly stable.
    \end{enumerate}
\end{conjecture}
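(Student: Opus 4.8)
The structure of the statement suggests establishing the cycle $(1)\Rightarrow(2)\Rightarrow(3)\Rightarrow(1)$, and two of these links are essentially already in hand. Theorem~\ref{GHS-Theorem} yields $(1)\Rightarrow(2)$, since strong stability with uniform rate is in particular strong stability, while $(2)\Rightarrow(3)$ is immediate: local strong stability (Definition~\ref{def:strong-stab}) only asks exponential stability of $\varphi_\ast\mu$ for $\varphi$ near $\id$, a subset of the hypotheses of strong stability. Moreover, the partial results already obtained provide the bound $\rho_{\mathrm{HS}}(M)\le 1$ under local strong stability. Hence the whole remaining content of the conjecture is the implication $(3)\Rightarrow(1)$, and, given $\rho_{\mathrm{HS}}(M)\le 1$, it amounts to excluding the critical value. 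My plan is therefore to prove the contrapositive: \emph{if $\rho_{\mathrm{HS}}(M)=1$, then $\Sigma_M$ is not locally strongly stable.}

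To destabilize, recall from \eqref{sys:perturbed} that $\Delta_{\varphi_\ast\mu}(s)=\det\bigl(\I-\int_{-1}^0 e^{s\varphi(\theta)}\,dM(\theta)\bigr)$, so producing a characteristic root on the imaginary axis for the perturbed system means finding $\omega\in\R$ and $\varphi$ near $\id$ for which $\int_{-1}^0 e^{i\omega\varphi(\theta)}\,dM(\theta)$ admits the eigenvalue $1$. Assume first that the supremum in \eqref{GHSC} is attained at some $\xi_0\in\mathfrak C$, so that $B_0:=\int_{-1}^0 e^{i\xi_0(\theta)}\,dM(\theta)$ has an eigenvalue $e^{i\beta}$ of modulus $\rho_{\mathrm{HS}}(M)=1$. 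A global phase shift then applies: setting $\xi_1:=\xi_0-\beta$, the matrix $\int_{-1}^0 e^{i\xi_1(\theta)}\,dM(\theta)=e^{-i\beta}B_0$ has eigenvalue exactly $1$. It remains to realize $\xi_1$ through a genuine delay perturbation. For a large frequency $\omega>0$ I would set $\varphi(\theta)=\frac{1}{\omega}\bigl(\xi_1(\theta)+2\pi n(\theta)\bigr)$, where $n(\theta)\in\mathbb Z$ is a nearest integer to $\frac{\omega\theta-\xi_1(\theta)}{2\pi}$; this $\varphi$ is Borel-measurable, satisfies $\omega\varphi\equiv\xi_1\pmod{2\pi}$, and obeys $\norm{\varphi-\id}_\infty\le \pi/\omega$. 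Consequently $\Delta_{\varphi_\ast\mu}(i\omega)=\det(\I-e^{-i\beta}B_0)=0$, so $S_{\varphi_\ast\mu}\ge 0$; provided $\varphi_\ast\mu\in\mathcal G$, this forces $\omega_{\varphi_\ast\mu}=S_{\varphi_\ast\mu}\ge 0$, i.e.\ loss of exponential stability. Letting $\omega\to+\infty$ makes $\varphi$ arbitrarily close to $\id$, contradicting local strong stability.

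Two technical points must be cleared to turn this sketch into a proof. First, $\varphi$ must take values in $[-1,0]$; since $\varphi(\theta)=\theta+O(1/\omega)$, the only possible violations occur near the endpoints, which one repairs by adjusting $n(\theta)$ (or clamping $\varphi$) on a small set chosen, if necessary, to avoid atoms of $\mu$ at $-1$ and $0$, such atoms being treated separately. Second, and more seriously, one must guarantee $\varphi_\ast\mu\in\mathcal G$, so that the spectrum-determined growth condition transfers the imaginary root into genuine instability; even for $M\in\mathcal N$ the pushforward of an absolutely continuous part need not remain absolutely continuous, so the perturbation must be designed (or the admissible class slightly enlarged) to preserve membership in $\mathcal G$.

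The decisive obstacle, however, is the standing assumption that the supremum defining $\rho_{\mathrm{HS}}(M)$ is \emph{attained}. In the finite-delay setting of Theorem~\ref{HS-Theorem} this is automatic, as the maximization runs over the compact torus $[0,2\pi]^N$; here it ranges over the infinite-dimensional set of measurable phase functions $\xi\colon[-1,0]\to\R$, and attainment is unclear. The natural route is to identify $e^{i\xi}$ with an element of the weak-$\ast$ compact ball $\{u\in L^\infty:\abs{u}\le 1\}$ (for the absolutely continuous part of $M$, where $u\mapsto\int u\,dM$ is weak-$\ast$ continuous), invoke continuity of the spectral radius $\rho$ to obtain a maximizer with $\abs{u}\le 1$, and then prove a bang-bang principle showing the maximum is actually attained at $\abs{u}=1$ by aligning each infinitesimal contribution $u(\theta)\,w^\ast M'(\theta)v$ with the phase of the dominant eigenvalue (with left/right eigenvectors $w,v$). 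Combined with the torus compactness coming from the atomic part of $M$, this would furnish an attaining $\xi_0$ and close the argument. I expect this attainment/bang-bang step, together with the preservation of the $\mathcal G$ condition under the pushforward, to be the genuinely hard part—presumably the reason the full equivalence is stated here only as a conjecture rather than a theorem.
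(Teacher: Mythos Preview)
The statement you are addressing is presented in the paper as a \emph{conjecture}, not a theorem: the paper does not prove it, and Theorem~\ref{GHS-Theorem} (the version with uniform rate) is offered explicitly as only a partial result, with Figure~\ref{fig:results} summarizing which implications are actually established. You have correctly identified that $(1)\Rightarrow(2)\Rightarrow(3)$ follow from Theorem~\ref{GHS-Theorem} and the definitions, and that the paper (via Lemma~\ref{lemma:roots} and the discussion of the $\star$ arrow) shows local strong stability forces $\rho_{\mathrm{HS}}(M)\le 1$. So the entire remaining content of the conjecture is the critical case $\rho_{\mathrm{HS}}(M)=1$, exactly as you say; there is no proof in the paper to compare your attempt against on that point.

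Your sketch for this critical case has genuine gaps, which you yourself flag. Two comments on them. First, on attainment: the paper does prove (in the unnumbered proposition following Proposition~\ref{prop:max_principle1}) that the supremum over $\mathfrak D$ is attained, via weak-$\ast$ compactness, but your construction needs attainment over $\mathfrak C$, i.e., by a function of unit modulus, and the bang-bang step you outline is not carried out anywhere. Second, on $\varphi_\ast\mu\in\mathcal G$: this is essential, since Definition~\ref{def:strong-stab} only quantifies over such $\varphi$, and your $\varphi(\theta)=\tfrac{1}{\omega}(\xi_1(\theta)+2\pi n(\theta))$ produces a pushforward with no evident structure. It is worth noting that the paper's own construction in Lemma~\ref{lemma:roots} avoids this second difficulty entirely by taking $\varphi$ piecewise constant, so that $\varphi_\ast\mu$ is a finite discrete measure and hence lies in $\mathcal N\subset\mathcal G$ automatically; the cost is that, working through approximation and the classical Hale--Silkowski theorem, one only obtains characteristic roots with $\Re s\ge -\delta$ when $\rho_{\mathrm{HS}}(M)=1$, which is enough to defeat a \emph{uniform} rate but not plain local strong stability. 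In short, your diagnosis of where the difficulty lies is accurate, and your proposal does not resolve it---nor does the paper.
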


\begin{remark}
Conjecture~\ref{GHS-Conjecture} was proved in the one-dimensional case in \cite{GoncalvesNetto2025Strong} in the slightly modified setting in which $\mathcal G$ is replaced by $\mathcal W$ in the statement of the conjecture and in the definitions of strong stability. The same argument from the proof of \cite[Theorem~2]{GoncalvesNetto2025Strong} also shows that, if $M \in \mathcal W$ and $\Var(M) < 1$, then $M$ is strongly stable (and hence also locally strongly stable), changing once again $\mathcal G$ for $\mathcal W$ in the definition of strong stability.
\end{remark}

We shall prove in the sequel a partial result on Conjecture~\ref{GHS-Conjecture}, in which we replace the strong stability of $\Sigma_M$ by slightly stronger notions which take into account a uniform exponential rate. Recall that, according to Definition~\ref{def:big-def}, given \(M \in \mathrm{NBV}([-1, 0], \mathcal M_d(\mathbb R))\), the exponential rate \(\omega_M\) associated with System~\(\Sigma_M\) from \eqref{sys:general} is the quantity defined in \eqref{eq:def-omega-M}, which represents the least upper bound on the exponential behavior of solutions of \eqref{sys:general}.

\begin{definition}[Strong stability with uniform rate]
\label{def:stron-stab-UR}
Let $M \in \mathcal G$ and $\mu$ be the matrix-valued Borel measure associated with $M$ in the sense of Remark~\ref{remk:M-mu}.
\begin{enumerate}    
\item We say that
$\Sigma_M$ is \emph{locally strongly stable with uniform rate} if there exist $\alpha > 0$ and $\epsilon > 0$ such that $\omega_{\varphi_\ast\mu}\leq -\alpha$ for all $\varphi \in \mathbf B$ with $\lVert \varphi - \id \rVert_\infty < \epsilon$  and $\varphi_\ast \mu \in \mathcal G$. 
\item We say that $\Sigma_M$ is \emph{strongly stable with uniform rate} if there exists $\alpha>0$ such that $\omega_{\varphi_\ast\mu}\leq -\alpha$ for all $\varphi\in\mathbf{B}$ with $\varphi_\ast \mu \in \mathcal G$.
\end{enumerate}
\end{definition}

The above definition is well justified in view of the following theorem, which shows that \eqref{GHSC} is equivalent to strong stability with uniform rate and local strong stability with uniform rate.

\begin{theorem}\label{GHS-Theorem}
Let $M \in \mathcal G$. Then the following statements are equivalent.
    \begin{enumerate}
        \item We have $\rho_{\mathrm{HS}}(M) < 1$.
        \item System~$\Sigma_M$ is strongly stable with uniform rate.
        \item System~$\Sigma_M$ is locally strongly stable with uniform rate.
    \end{enumerate}
\end{theorem}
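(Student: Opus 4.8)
The plan is to prove the cycle of implications $(1) \Rightarrow (2) \Rightarrow (3) \Rightarrow (1)$. The implication $(2) \Rightarrow (3)$ is immediate from the definitions, since strong stability with uniform rate (a uniform bound $\omega_{\varphi_\ast\mu} \le -\alpha$ over \emph{all} admissible $\varphi$) trivially restricts to the neighborhood $\lVert \varphi - \id \rVert_\infty < \epsilon$ required by local strong stability with uniform rate. So the substance lies in $(1) \Rightarrow (2)$ and $(3) \Rightarrow (1)$.

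For $(1) \Rightarrow (2)$, the key observation is that $\rho_{\mathrm{HS}}$ controls the characteristic function uniformly over all pushforwards. First I would note that for any $\varphi \in \mathbf B$, the pushforward satisfies $\int_{-1}^0 e^{s\theta}\,d(\varphi_\ast\mu)(\theta) = \int_{-1}^0 e^{s\varphi(\theta)}\,d\mu(\theta)$. For $s = \beta + i\omega$ with $\beta \ge 0$, writing $e^{s\varphi(\theta)} = e^{\beta\varphi(\theta)}e^{i\omega\varphi(\theta)}$ and using $\varphi(\theta) \in [-1,0]$ so that $e^{\beta\varphi(\theta)} \le 1$, the goal is to bound $\rho\bigl(\int_{-1}^0 e^{s\varphi(\theta)}\,d\mu(\theta)\bigr)$. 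The difficulty is that the factor $e^{\beta\varphi(\theta)}$ is real and varies with $\theta$, so the integrand is not simply of the form $e^{i\xi(\theta)}\,d\mu$ and one cannot directly invoke the definition of $\rho_{\mathrm{HS}}$. I expect one must argue that on the closed right half-plane, the operator $\I - \int_{-1}^0 e^{s\varphi(\theta)}\,d\mu(\theta)$ stays invertible: since $\rho_{\mathrm{HS}}(M) < 1$, and the real attenuation $e^{\beta\varphi(\theta)} \le 1$ only shrinks the contribution, one should derive $\rho\bigl(\int e^{s\varphi}\,d\mu\bigr) \le \rho_{\mathrm{HS}}(M) < 1$ for all $\Re s \ge 0$, possibly after reducing to $\beta = 0$ by a monotonicity or a perturbation argument in $\beta$. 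This would yield $\Delta_{\varphi_\ast\mu}(s) \neq 0$ for $\Re s \ge 0$, uniformly in $\varphi$; combined with the spectrum-determined growth hypothesis ($\varphi_\ast\mu \in \mathcal G$) and a uniform lower bound $\inf_{\Re s \ge -\alpha}\lvert\Delta_{\varphi_\ast\mu}(s)\rvert > 0$ independent of $\varphi$, one concludes $\omega_{\varphi_\ast\mu} = S_{\varphi_\ast\mu} \le -\alpha$.

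For $(3) \Rightarrow (1)$, I would argue by contraposition: assume $\rho_{\mathrm{HS}}(M) \ge 1$ and construct, for perturbations $\varphi$ arbitrarily close to the identity, a pushforward $\varphi_\ast\mu \in \mathcal G$ that is not exponentially stable with rate bounded away from zero. By definition of $\rho_{\mathrm{HS}}$ as a supremum over $\xi \in \mathfrak C$, choose a near-optimal $\xi$ so that $\rho\bigl(\int_{-1}^0 e^{i\xi(\theta)}\,d\mu(\theta)\bigr)$ is close to or exceeds $1$; let $v$ be a corresponding near-eigenvector with eigenvalue of modulus $\ge 1 - \epsilon'$. The task is then to realize the phase profile $\xi(\theta)$ as coming from a genuine delay perturbation: I would seek $\varphi$ near $\id$ and a frequency $\omega$ so that $\omega\varphi(\theta) \approx \xi(\theta) \pmod{2\pi}$ on the support of $\mu$, making $s = i\omega$ a near-root of $\Delta_{\varphi_\ast\mu}$. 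The hard part—and the main obstacle—is precisely this realization: matching an \emph{arbitrary} Borel phase function $\xi$ by the rescaled displacement $\omega\varphi(\theta)$ while keeping $\lVert\varphi - \id\rVert_\infty$ small and ensuring $\varphi_\ast\mu$ still lies in $\mathcal G$. This is delicate because large $\omega$ is needed to wrap small displacements into arbitrary phases, and one must control the measure-theoretic behavior of the pushforward (e.g.\ avoiding collapse of the discrete part or creation of a singular part outside $\mathcal G$); I anticipate this step requires a careful approximation argument, reducing first to finitely supported or piecewise-constant $\xi$ and $\mu$, and then perturbing the delays by small rational-independent increments to produce an actual zero of the characteristic function on the imaginary axis, contradicting local strong stability with uniform rate.
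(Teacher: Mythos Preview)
Your cycle $(1)\Rightarrow(2)\Rightarrow(3)\Rightarrow(1)$ is exactly the paper's structure, and your handling of $(2)\Rightarrow(3)$ is fine. But both of the nontrivial implications have a real gap relative to what the paper does.

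\medskip

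\textbf{On $(1)\Rightarrow(2)$.} Your intuition that ``the real attenuation $e^{\beta\varphi(\theta)}\le 1$ only shrinks the contribution'' is precisely the point, but it is \emph{not} a proof: the spectral radius of $\int \eta(\theta)\,dM(\theta)$ is not monotone in $|\eta|$ in any elementary sense, so ``reducing to $\beta=0$ by a monotonicity or perturbation argument'' is where the actual work lies. The paper fills this gap with a maximum-principle step (its Proposition~\ref{prop:max_principle1}): it shows
\[
\rho_{\mathrm{HS}}(M)=\sup_{\xi\in\mathfrak C}\rho\!\left(\int_{-1}^0 e^{i\xi(\theta)}\,dM(\theta)\right)
=\sup_{\eta\in\mathfrak D}\rho\!\left(\int_{-1}^0 \eta(\theta)\,dM(\theta)\right),
\]
where $\mathfrak D$ consists of Borel maps into the closed unit disk. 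This is proved by approximating $\eta$ by simple functions and invoking a matrix maximum-modulus principle for $\rho$ (via Gelfand's formula). Once you have this, the argument you want is immediate: for $\Re s\ge 0$ the function $\eta(\theta)=e^{s\varphi(\theta)}$ lies in $\mathfrak D$, so $\rho\bigl(\int e^{s\varphi}\,dM\bigr)\le\rho_{\mathrm{HS}}(M)<1$ and $\Delta_{\varphi_\ast\mu}(s)\neq 0$. To push into a uniform strip $\Re s\ge -\alpha$ (independent of $\varphi$), the paper does \emph{not} use a uniform lower bound on $|\Delta|$ as you suggest; it instead uses uniform continuity of $\rho$ on the bounded set $\{\int e^{s\varphi}\,dM:\varphi\in\mathbf B,\ |\Re s|\le 1\}$ to show that $\rho(\cdot)<1$ persists on a thin vertical strip (Lemma~\ref{lemma:noroots}).

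\medskip

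\textbf{On $(3)\Rightarrow(1)$.} Your primary plan---realize an arbitrary Borel phase $\xi$ as $\omega\varphi(\theta)$ for some large $\omega$ and $\varphi$ close to $\id$---is not the mechanism, and it is unclear it can be made to yield an actual zero (not just a near-zero) while keeping $\varphi_\ast\mu\in\mathcal G$. The paper's construction (Lemma~\ref{lemma:roots}) is closer to what you list as your ``fallback'': approximate the near-optimizer by a simple function $\eta_0=\sum_{k=1}^N\xi_k\mathbbm 1_{E_k}$ with each $E_k$ of diameter $<\epsilon/2$, then choose \emph{rationally independent} $\tau_k\in(0,1)$ with $-\tau_k$ within $\epsilon$ of $E_k$, and set $\varphi|_{E_k}\equiv -\tau_k$. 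The pushforward $\varphi_\ast\mu$ is then a finite sum of Dirac masses $\sum A_k\delta_{-\tau_k}$, automatically in $\mathcal N\subset\mathcal G$, and the existence of a characteristic root with $\Re s\ge \ln\rho_{\mathrm{HS}}(M)-\delta$ follows from the \emph{classical} Hale--Silkowski criterion (together with Proposition~\ref{prop:max_principle} to pass from the disk-valued $\xi_k$ back to unimodular ones). So the reduction to finitely many rationally independent delays is not a preliminary approximation step but the whole argument; no phase-matching via large $\omega$ is needed.
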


Before we proceed with the proof of Theorem~\ref{GHS-Theorem}, let us provide a brief illustrative example of its application.

\begin{example}
Let us consider \eqref{sys:general} with \(M\) given by \eqref{eq:NS} under the assumption that \(N\) piecewise affine. More precisely, we fix a sequence \((A_k)_{k \in \mathbb N}\) in \(\mathcal M_d(\mathbb R)\), a sequence \((\tau_k)_{k \in \mathbb N}\) of pairwise distinct real numbers in \((0, 1)\), a positive integer \(N \in \mathbb N\), \(N\) matrices \(B_1, \dotsc, B_N\) in \(\mathcal M_d(\mathbb R)\), and real numbers \(h_0, \dotsc, h_N\) such that \(0 = h_0 < h_1 < \dotsb < h_N = 1\). We set \(\Delta_j = h_j - h_{j - 1}\) for \(j \in \{1, \dotsc, N\}\) and we consider the system
\begin{equation}
\label{eq:expl-two-matrices}
x(t) = \sum_{k = 1}^{+\infty} A_k x(t - \tau_k) + \sum_{j = 1}^N B_j \int_{-{h_j}}^{-h_{j-1}} x(t + \theta) d\theta,
\end{equation}
which is of the form \eqref{sys:general} with
\[M(\theta) = \sum_{k = 1}^{+\infty} A_k \mathbbm 1_{[-\tau_k, 0]}(\theta) + \sum_{j = 1}^N B_j  \min(\Delta_j, \max(\theta + h_j, 0)).\]
From \eqref{GHSC}, we have
\[
\rho_{\mathrm{HS}}(M) = \sup_{\xi \in \mathfrak C} \rho\left(\sum_{k = 1}^{+\infty} A_k e^{i \xi(-\tau_k)} + \sum_{j = 1}^N B_j \int_{-h_j}^{-h_{j-1}} e^{i \xi(\theta)} d\theta\right).
\]
Since the integrals in the above expression do not change when \(\xi\) is modified in a set of Lebesgue measure zero, one may regard the values \(\xi(-\tau_k)\) as independent variables, that is, we have
\[
\rho_{\mathrm{HS}}(M) = \sup_{((\theta_k)_{k \in \mathbb N}, \xi) \in [0, 2\pi]^{\mathbb N} \times \mathfrak C} \rho\left(\sum_{k = 1}^{+\infty} A_k e^{i \theta_k} + \sum_{j = 1}^N B_j \int_{-h_j}^{-h_{j-1}} e^{i \xi(\theta)} d\theta\right).
\]
Any complex number of the form \(\int_{-h_j}^{-h_{j-1}} e^{i \xi(\theta)} d\theta\) for \(\xi \in \mathfrak C\) belongs necessarily to the closed disk \(\overline B(0, \Delta_j)\) centered at \(0\) and with radius \(\Delta_j\) and, conversely, any complex number in this disk can be written under the previous integral form through a suitable choice of Borel-measurable function \(\xi\colon (-h_j, -h_{j-1}) \to \mathbb R\). Hence
\[
\rho_{\mathrm{HS}}(M) = \sup_{((\theta_k)_{k \in \mathbb N}, \zeta_1, \dotsc, \zeta_N) \in [0, 2\pi]^{\mathbb N} \times \overline B(0, \Delta_1) \times \dotsb \times \overline B(0, \Delta_N)} \rho\left(\sum_{k = 1}^{+\infty} A_k e^{i \theta_k} + \sum_{j = 1}^N B_j \zeta_j\right).
\]
Using Lemma~\ref{lemma:max_principle} in Appendix~\ref{app:other}, we deduce that
\[
\rho_{\mathrm{HS}}(M) = \sup_{((\theta_k)_{k \in \mathbb N}, (\vartheta_j)_{j = 1}^N) \in [0, 2\pi]^{\mathbb N} \times [0, 2 \pi]^N} \rho\left(\sum_{k = 1}^{+\infty} A_k e^{i \theta_k} + \sum_{j = 1}^N B_j \Delta_j e^{i\vartheta_j}\right).
\]
Hence, by Theorem~\ref{GHS-Theorem}, System~\eqref{eq:expl-two-matrices} is strongly stable with uniform rate if and only if
\[\sup_{((\theta_k)_{k \in \mathbb N}, (\vartheta_j)_{j = 1}^N) \in [0, 2\pi]^{\mathbb N} \times [0, 2 \pi]^N} \rho\left(\sum_{k = 1}^{+\infty} A_k e^{i \theta_k} + \sum_{j = 1}^N B_j \Delta_j e^{i\vartheta_j}\right) < 1.\]
\end{example}


The remainder of this section is dedicated to the proof of Theorem~\ref{GHS-Theorem}.

\subsection{Properties of \texorpdfstring{$\rho_{\mathrm{HS}}$}{rhoHS}}


We start by proving some properties of the quantity $\rho_{\mathrm{HS}}$ defined in \eqref{eq:rhoHS-intro}. Our first result concerns the particular case in which $M$ is given by \eqref{eq:M-finite-sum} and $\rho_{\mathrm{HS}}$ reduces to the left-hand side of \eqref{eq:HScriterion}. In this case, as $\theta_1, \dotsc, \theta_N$ are only used in $e^{i \theta_1}, \dotsc, e^{i \theta_N}$ in \eqref{eq:HScriterion}, the maximum in this expression can be seen as a maximum over $N$ complex numbers in the unit circle. Using a suitable version of the maximum principle for the spectral radius matrix-valued functions, one can replace the maximum in \eqref{eq:HScriterion} for a maximum over $N$ complex numbers in the unit disk. More precisely, we have the following result.

\begin{proposition}
\label{prop:max_principle}
    For $A=(A_1, \dotsc,A_N)\in \mathcal M_d (\R)^N$, we have
    \[\max_{t\in \overline B(0, 1)^N} \rho \left( \sum_{k=1}^N t_k A_k  \right) = \max_{\theta\in[0,2\pi]^N} \rho \left( \sum_{k=1}^N A_k e^{i\theta_k} \right),\]
    where $\overline B(0, 1)=\{s\in \C : \abs{s}\leq 1\}$.
\end{proposition}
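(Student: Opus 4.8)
The plan is to prove the two inequalities separately. The inequality in which the left-hand side dominates is immediate: the distinguished boundary of the polydisk, i.e.\ the torus $\{t \in \mathbb{C}^N : \abs{t_k} = 1 \text{ for all } k\}$ parametrized by $t_k = e^{i\theta_k}$, is contained in the closed polydisk $\bar{B}_1(0)^N$, so the maximum over the former cannot exceed the maximum over the latter. The whole content is the reverse inequality, which I would obtain from a maximum principle argument for the spectral radius.

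First I would set $u(t) = \rho\!\left(\sum_{k=1}^N t_k A_k\right)$ for $t = (t_1, \dotsc, t_N) \in \mathbb{C}^N$. Since the spectral radius is continuous on $\mathcal{M}_d(\mathbb{C})$ and $t \mapsto \sum_k t_k A_k$ is continuous (indeed linear), $u$ is continuous, so its maximum over the compact set $\bar{B}_1(0)^N$ is attained. The key analytic input is that, for any fixed matrix $C$ and any fixed index $k$, the map $z \mapsto \rho(z A_k + C)$ is subharmonic on $\mathbb{C}$; this is an instance of Vesentini's theorem, asserting that the spectral radius of a holomorphic Banach-algebra-valued map is subharmonic, applied to the entire affine map $z \mapsto z A_k + C$. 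Consequently, fixing all coordinates of $t$ except the $k$-th at arbitrary values in $\bar{B}_1(0)$, the restriction $t_k \mapsto u(t)$ is subharmonic and continuous on $\bar{B}_1(0)$, so by the maximum principle for subharmonic functions it attains its maximum over the closed disk on the circle $\abs{t_k} = 1$.

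With this coordinatewise maximum principle in hand, I would conclude by iterating over the coordinates. Let $t^\ast \in \bar{B}_1(0)^N$ be a global maximizer of $u$. Applying the one-variable maximum principle in the first coordinate produces $s_1$ with $\abs{s_1} = 1$ and $u(s_1, t_2^\ast, \dotsc, t_N^\ast) \ge u(t^\ast)$; by maximality of $t^\ast$ this point is again a maximizer. Repeating in the second coordinate yields $s_2$ with $\abs{s_2} = 1$ and $u(s_1, s_2, t_3^\ast, \dotsc, t_N^\ast) \ge u(s_1, t_2^\ast, \dotsc, t_N^\ast)$, and so on. After $N$ steps one reaches a torus point $(s_1, \dotsc, s_N)$ with $u(s_1, \dotsc, s_N) \ge u(t^\ast) = \max_{\bar{B}_1(0)^N} u$, which gives the reverse inequality and hence equality; writing $s_k = e^{i\theta_k}$ recovers exactly the right-hand side of the claimed identity.

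The only nontrivial ingredient, and the step I expect to be the main obstacle if one insists on a self-contained argument, is the subharmonicity of $z \mapsto \rho(z A_k + C)$. One can either invoke Vesentini's theorem directly, or argue by hand: the eigenvalues of $z A_k + C$ are the roots of a monic polynomial in $\lambda$ whose coefficients are entire in $z$, and away from the isolated branch points each eigenvalue branch $\lambda_j(z)$ is holomorphic, so $\log\abs{\lambda_j(z)}$ is harmonic where $\lambda_j \neq 0$ and equals $-\infty$ otherwise; the pointwise maximum $\log \rho = \max_j \log\abs{\lambda_j}$ is then subharmonic (the negligible set of branch points being removable by continuity), and $\rho = \exp(\log \rho)$ is subharmonic as the composition of the convex increasing function $\exp$ with a subharmonic one. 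Rigorously handling the branch points is the delicate point, so citing Vesentini's theorem is the economical route.
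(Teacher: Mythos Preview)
Your proof is correct, and the coordinatewise iteration you describe matches the overall architecture of the paper's argument. The difference lies in how the one-variable maximum principle for the spectral radius is obtained. You invoke Vesentini's theorem to get subharmonicity of $z \mapsto \rho(zA_k + C)$ directly, then apply the maximum principle for subharmonic functions. The paper instead proves a general lemma (its Lemma~\ref{lemma:max_principle} in the appendix) in two stages: first it establishes the analogous identity for the \emph{norm} $\norm{f(z)}$ via the vector-valued maximum modulus principle, and then it transfers this to the spectral radius by Gelfand's formula $\rho(A) = \lim_n \norm{A^n}^{1/n}$, using that the convergence is uniform on compact sets of matrices and applying the norm result to $f(z)^M$ for $M$ large. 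Your route is conceptually more direct since it never leaves the spectral radius; the paper's route is more elementary in that it avoids Vesentini's theorem (and the branch-point subtleties you flag) in favor of standard tools. Both are perfectly valid.
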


\begin{proof}
  We take $f(z_1, \dotsc,z_N)=\sum_{k=1}^N z_k A_k $ and apply Lemma~\ref{lemma:max_principle} from Appendix~\ref{app:other}.
\end{proof}

Our next aim is to generalize Proposition~\ref{prop:max_principle} by proving that the function $e^{i \xi(\cdot)}$ in \eqref{eq:rhoHS-intro} can be replaced by a function taking values in the unit disk. For that purpose, we define the set
\begin{equation}
\label{eq:def-mathfrak-D}
\mathfrak D = \{\eta\colon [-1,0]\to \bar{B}_1(0) : \eta \text{ is Borel-measurable}\}.
\end{equation}
Our first result in that direction is the following.

    \begin{lemma}
    \label{lemm:partial_max_principle}
    Let $M\in \mathrm{NBV}([-1, 0],\mathcal M_d (\R))$ and suppose that 
   \[\sup_{\xi\in \mathfrak C}  \rho \left(\int_{-1}^0 e^{i\xi(\theta)} dM(\theta)\right) <1.\]
    Then 
    \[\sup_{\eta\in \mathfrak D} \rho \left(\int_{-1}^0 \eta(\theta)dM(\theta)\right) \leq 1.\]
\end{lemma}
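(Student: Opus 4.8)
The plan is to reduce the disk-valued supremum to the finite maximum principle already established in Proposition~\ref{prop:max_principle}, by approximating the measurable map $\eta$ by simple functions, and then to pass to the limit using continuity of the spectral radius. Throughout I write $\mu = \mu_M$ for the finite (real) matrix-valued measure associated with $M$, and abbreviate $\rho_{\mathrm{HS}}(M) = \sup_{\xi\in\mathfrak C}\rho\bigl(\int_{-1}^0 e^{i\xi}\,d\mu\bigr)$, which is $<1$ by hypothesis.

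First I would settle the case of a simple function $\eta = \sum_{j=1}^m z_j \mathbbm 1_{E_j} \in \mathfrak D$, with $z_j \in \bar{B}_1(0)$ and $E_1,\dots,E_m$ a Borel partition of $[-1,0]$. Here $\int_{-1}^0 \eta\,d\mu = \sum_{j=1}^m z_j A_j$ with $A_j := \mu(E_j) \in \mathcal M_d(\R)$, so Proposition~\ref{prop:max_principle} applied to $(A_1,\dots,A_m)$ gives
\[
\rho\Bigl(\textstyle\sum_j z_j A_j\Bigr) \le \max_{t\in\bar{B}_1(0)^m}\rho\Bigl(\textstyle\sum_j t_j A_j\Bigr) = \max_{\theta\in[0,2\pi]^m}\rho\Bigl(\textstyle\sum_j A_j e^{i\theta_j}\Bigr).
\]
The key observation is that each matrix on the right is itself of the form $\int_{-1}^0 e^{i\xi}\,d\mu$: for $\theta\in[0,2\pi]^m$, the simple real-valued function $\xi = \sum_j \theta_j \mathbbm 1_{E_j}$ belongs to $\mathfrak C$ and satisfies $\int_{-1}^0 e^{i\xi}\,d\mu = \sum_j A_j e^{i\theta_j}$. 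Hence the right-hand side is bounded by $\rho_{\mathrm{HS}}(M)$, so $\rho\bigl(\int_{-1}^0 \eta\,d\mu\bigr) \le \rho_{\mathrm{HS}}(M) < 1$ for every simple $\eta \in \mathfrak D$.

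Next I would approximate an arbitrary $\eta\in\mathfrak D$ uniformly by such simple functions. Since $\bar{B}_1(0)$ is compact, for each $\varepsilon>0$ I can partition it into finitely many Borel sets $D_1,\dots,D_m$ of diameter at most $\varepsilon$, pick $w_j \in D_j$, and set $\eta_\varepsilon = \sum_j w_j \mathbbm 1_{\eta^{-1}(D_j)}$; this is a simple function in $\mathfrak D$ (the preimages $\eta^{-1}(D_j)$ are Borel and partition $[-1,0]$) with $\norm{\eta - \eta_\varepsilon}_\infty \le \varepsilon$. Using the standard estimate $\bigl\lVert \int_{-1}^0 (\eta-\eta_\varepsilon)\,d\mu\bigr\rVert \le \norm{\eta-\eta_\varepsilon}_\infty\,\norm{\mu}([-1,0])$ (with $\norm{\mu}([-1,0]) = \Var M$ by Proposition~\ref{proposition:tvcoincides}), the matrices $\int_{-1}^0 \eta_\varepsilon\,d\mu$ converge to $\int_{-1}^0 \eta\,d\mu$ as $\varepsilon\to 0$. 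Since the spectral radius is a continuous function of the matrix entries and each approximant obeys $\rho\bigl(\int \eta_\varepsilon\,d\mu\bigr) \le \rho_{\mathrm{HS}}(M)$ by the previous step, I conclude $\rho\bigl(\int_{-1}^0 \eta\,d\mu\bigr) \le \rho_{\mathrm{HS}}(M)$, and taking the supremum over $\eta\in\mathfrak D$ yields the claim (in fact with the sharper bound $\rho_{\mathrm{HS}}(M) < 1$ in place of $1$).

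I do not anticipate a genuine obstacle, since the substance is carried by Proposition~\ref{prop:max_principle}; the only points requiring care are bookkeeping. First, the approximants must genuinely take values in $\bar{B}_1(0)$, which is why I partition the target disk (choosing $w_j \in D_j \subset \bar{B}_1(0)$) rather than approximating the real and imaginary parts of $\eta$ separately, a procedure that could stray outside the disk. Second, Proposition~\ref{prop:max_principle} requires \emph{real} matrices, which is exactly why it matters that $A_j = \mu(E_j)\in\mathcal M_d(\R)$. The closest thing to a pitfall is the transfer back to the admissible class: one must check that every point on the distinguished torus produced by Proposition~\ref{prop:max_principle} corresponds to an element $e^{i\xi}$ with $\xi\in\mathfrak C$, which is precisely what the simple function $\xi=\sum_j\theta_j\mathbbm 1_{E_j}$ achieves.
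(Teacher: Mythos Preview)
Your proof is correct and follows essentially the same strategy as the paper: reduce to simple functions, invoke Proposition~\ref{prop:max_principle}, and pass to the limit via continuity of the spectral radius. The differences are purely cosmetic in the logic but worth noting for what they buy. The paper argues by contradiction and uses a pointwise simple-function approximation together with dominated convergence, while you argue directly and approximate \emph{uniformly} by partitioning the target disk $\bar B_1(0)$; your construction has the small advantage of making it transparent that the approximants stay in $\mathfrak D$. More interestingly, because you never throw away the quantitative bound coming from Proposition~\ref{prop:max_principle}, you actually establish $\sup_{\eta\in\mathfrak D}\rho\bigl(\int_{-1}^0\eta\,dM\bigr)\le \rho_{\mathrm{HS}}(M)$, which is the full content of the paper's subsequent Proposition~\ref{prop:max_principle1}; the paper instead proves only the weaker $\le 1$ here and then bootstraps to the equality via a separate rescaling argument.
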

\begin{proof}
    By contradiction, suppose that 
    \[\sup_{\eta\in \mathfrak D} \rho \left(\int_{-1}^0 \eta(\theta)dM(\theta)\right) > 1.\]
    Then there exists $\eta \in \mathfrak D$ such that $\rho \left(\int_{-1}^0 \eta(\theta)dM(\theta)\right) > 1$. From \cite[Theorem~2.10]{Folland1999} there exists a sequence $(\eta_n)_{n \in \mathbb N}$ of simple functions such that $\abs{\eta_n(\theta)} \leq \abs{\eta(\theta)}$ for every $\theta\in [-1,0]$ and $n \in \mathbb N$ and $\eta_n \to \eta$ pointwise as $n \to +\infty$. Then, from Lebesgue's dominated convergence theorem, we obtain
    \[\lim_{n \to +\infty} \int_{-1}^0 \eta_n d\mu_M = \int_{-1}^0 \eta d\mu_M. \]
    By the continuity of the spectral radius, we obtain that there exists a simple function $\widetilde\eta \in \mathfrak D$ satisfying $\rho \left(\int_{-1}^0 \widetilde\eta(\theta)dM(\theta)\right) > 1$. Let us write $\widetilde\eta = \sum_{k=1}^m\alpha_k \mathbbm{1}_{E_k}$ with $m \in \mathbb N$, $\alpha_k \in \bar{B}_1(0)$ and $E_k \in \mathcal B_{[-1,0]}$ for $k \in \{1, \dotsc, m\}$ such that $\bigcup_{k=1}^m E_k=[-1,0]$ and $E_j\cap E_k=\emptyset$ for $j\ne k$. Using Proposition~\ref{prop:max_principle}, we have
    \begin{align*}
        1<\rho \left(\int_{-1}^0 \widetilde\eta(\theta)dM(\theta)\right) &=\rho \left(\sum_{k=1}^m \alpha_k \mu_M(E_k) \right)\leq \max_{t\in \bar{B}_1(0)^m} \rho \left( \sum_{k=1}^m t_k \mu_M(E_k)  \right)\\
        &=  \max_{\theta\in[0,2\pi]^m} \rho \left( \sum_{k=1}^m \mu_M(E_k) e^{i\theta_k} \right)\leq \sup_{\xi\in \mathfrak C}  \rho \left(\int_{-1}^0 e^{i\xi(\theta)} dM(\theta)\right)<1,
    \end{align*}
    which gives the desired contradiction. 
    \end{proof} 

We are now in position to generalize Proposition~\ref{prop:max_principle} to $\rho_{\mathrm{HS}}$.

\begin{proposition}\label{prop:max_principle1}
For $M \in \mathrm{NBV}([-1, 0],\mathcal M_d (\R))$, we have
\begin{equation}
\label{eq:supC-equal-supD}
\rho_{\mathrm{HS}}(M) = \sup_{\xi\in \mathfrak C}  \rho \left(\int_{-1}^0 e^{i\xi(\theta)} dM(\theta)\right)=\sup_{\eta\in \mathfrak D} \rho \left(\int_{-1}^0 \eta(\theta)dM(\theta)\right).
\end{equation}
\end{proposition}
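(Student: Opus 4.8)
The plan is to note first that the left-hand equality in \eqref{eq:supC-equal-supD} is nothing but the definition of $\rho_{\mathrm{HS}}(M)$ recorded in \eqref{GHSC}, so the entire content of the proposition lies in the second equality. To this end, for $M \in \mathrm{NBV}([-1,0], \mathcal M_d(\R))$ abbreviate
\[
A(M) = \sup_{\xi\in \mathfrak C} \rho\left(\int_{-1}^0 e^{i\xi(\theta)}dM(\theta)\right), \qquad B(M) = \sup_{\eta\in \mathfrak D} \rho\left(\int_{-1}^0 \eta(\theta)dM(\theta)\right),
\]
and the goal is to prove $A(M) = B(M)$.

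The inequality $A(M) \le B(M)$ is immediate: for each $\xi \in \mathfrak C$ the function $\eta := e^{i\xi(\cdot)}$ satisfies $\abs{\eta(\theta)} = 1 \le 1$ for all $\theta \in [-1,0]$, so $\eta \in \mathfrak D$; thus the supremum defining $A(M)$ ranges over a subfamily of that defining $B(M)$.

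For the reverse inequality, the idea is to strip the normalization hypothesis ``$<1$'' from Lemma~\ref{lemm:partial_max_principle} by a scaling argument, exploiting that both $A$ and $B$ are positively homogeneous of degree one in $M$. Indeed, for $\lambda > 0$ one has $\int_{-1}^0 \eta \, d(\lambda M) = \lambda \int_{-1}^0 \eta \, dM$, whence $\rho$ scales by $\lambda$ and consequently $A(\lambda M) = \lambda A(M)$ and $B(\lambda M) = \lambda B(M)$; moreover $\lambda M \in \mathrm{NBV}([-1,0], \mathcal M_d(\R))$ since this space is a vector space. Fix $\epsilon > 0$ and set $M_\epsilon = \frac{1}{A(M) + \epsilon}\, M$, which is legitimate because $A(M) + \epsilon > 0$. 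Then $A(M_\epsilon) = \frac{A(M)}{A(M)+\epsilon} < 1$, so Lemma~\ref{lemm:partial_max_principle} applies to $M_\epsilon$ and gives $B(M_\epsilon) \le 1$, that is, $B(M) \le A(M) + \epsilon$. Letting $\epsilon \to 0^+$ yields $B(M) \le A(M)$, completing the proof.

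I expect no genuine obstacle at this stage: the substantive work — approximation of $\eta \in \mathfrak D$ by simple functions, passage to the limit via dominated convergence and continuity of the spectral radius, and the finite-dimensional maximum principle (Proposition~\ref{prop:max_principle}) — has already been carried out inside Lemma~\ref{lemm:partial_max_principle}, so the only addition here is the homogenization step, which is routine. The single point deserving a check is that the argument also covers the degenerate case $A(M) = 0$, where $A(M_\epsilon) = 0 < 1$ still holds and forces $B(M) \le \epsilon$ for every $\epsilon > 0$, hence $B(M) = 0 = A(M)$.
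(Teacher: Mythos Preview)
Your proof is correct and follows essentially the same approach as the paper: both rescale $M$ by $1/(\rho_{\mathrm{HS}}(M)+\epsilon)$ to place oneself in the hypothesis of Lemma~\ref{lemm:partial_max_principle}, deduce $B(M)\le \rho_{\mathrm{HS}}(M)+\epsilon$, and let $\epsilon\to 0^+$. Your explicit treatment of the degenerate case $A(M)=0$ is a nice addition that the paper leaves implicit.
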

    
    \begin{proof}
        Set $\rho_0= \rho_{\mathrm{HS}}(M)$. Note that, for every $\epsilon > 0$,
    \[\sup_{\xi\in \mathfrak C}  \rho \left(\int_{-1}^0 e^{i\xi(\theta)} d\left( \frac{M(\theta)}{\rho_0+\epsilon}\right)\right) = \frac{\rho_0}{\rho_0+\epsilon} < 1.\]
    Therefore, by Lemma~\ref{lemm:partial_max_principle}, we have
    \[\sup_{\eta\in \mathfrak D}  \rho \left(\int_{-1}^0 \eta(\theta) d\left( \frac{M(\theta)}{\rho_0+\epsilon}\right)\right) \leq 1,\]
    hence
    \[\sup_{\eta\in \mathfrak D} \rho \left(\int_{-1}^0 \eta(\theta)dM(\theta)\right) \leq \rho_0 + \epsilon,\quad \forall \epsilon > 0.\]
    Therefore, as $\epsilon > 0$ is arbitrary, we deduce that
    \[\sup_{\xi\in \mathfrak C}  \rho \left(\int_{-1}^0 e^{i\xi(\theta)} dM(\theta)\right)\leq \sup_{\eta\in \mathfrak D} \rho \left(\int_{-1}^0 \eta(\theta)dM(\theta)\right) \leq \rho_0 = \sup_{\xi\in \mathfrak C}  \rho \left(\int_{-1}^0 e^{i\xi(\theta)} dM(\theta)\right), \]
    as required.
    \end{proof}

Our next result shows that the supremum over $\mathfrak D$ in the right-hand side of \eqref{eq:supC-equal-supD} is attained. Although this result is not directly used in the proof of Theorem~\ref{GHS-Theorem}, it can be obtained as a byproduct of our analysis and we provide it here since we deem that it has an interest on itself.

    \begin{proposition}
        If $M\in \mathrm{NBV}([-1, 0],\mathcal M_d (\R))$, then there exists $\eta_\ast\in \mathfrak D$ such that
        \[\sup_{\eta\in \mathfrak D} \rho \left(\int_{-1}^0 \eta(\theta)dM(\theta)\right)=\rho \left(\int_{-1}^0 \eta_\ast(\theta)dM(\theta)\right).\]
    \end{proposition}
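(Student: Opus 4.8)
The plan is to show that the set of matrices
\[
\mathcal A := \left\{\int_{-1}^0 \eta(\theta)\, d\mu_M(\theta) : \eta \in \mathfrak D\right\} \subset \mathcal M_d(\C)
\]
is compact, after which the conclusion is immediate: the spectral radius $\rho$ is continuous on $\mathcal M_d(\C)$, so it attains its supremum over the compact set $\mathcal A$ at some matrix of the form $\int_{-1}^0 \eta_\ast(\theta)\, dM(\theta)$ with $\eta_\ast \in \mathfrak D$, which is exactly the claim.

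To prove compactness of $\mathcal A$, I would pass to the language of $L^\infty$. Since the integral $\int_{-1}^0 \eta\, d\mu_M$ depends only on the $\norm{\mu_M}$-a.e.\ equivalence class of $\eta$, and since every class in the closed unit ball of $L^\infty(\norm{\mu_M};\C)$ admits a representative bounded by $1$ everywhere (truncate a given representative radially onto $\bar B_1(0)$), the set $\mathcal A$ coincides with the image of the closed unit ball $B$ of $L^\infty(\norm{\mu_M};\C)$ under the linear map $T(\eta) = \int_{-1}^0 \eta\, d\mu_M$. Next, because each scalar measure $(\mu_M)_{ij}$ is absolutely continuous with respect to the total variation measure $\norm{\mu_M}$, the Radon--Nikodym theorem yields densities $g_{ij} \in L^1(\norm{\mu_M})$ with $d(\mu_M)_{ij} = g_{ij}\, d\norm{\mu_M}$, so that the $(i,j)$ entry of $T(\eta)$ equals $\int_{-1}^0 \eta\, g_{ij}\, d\norm{\mu_M}$. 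Each such entry is therefore the pairing of $\eta \in L^\infty = (L^1)^\ast$ against a fixed element of $L^1$, hence weak-$\ast$ continuous in $\eta$.

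The compactness of $\mathcal A$ now follows from the Banach--Alaoglu theorem: the unit ball $B$ is weak-$\ast$ compact, and $T\colon B \to \mathcal M_d(\C)$ is continuous for the weak-$\ast$ topology on $B$ and the (finite-dimensional) norm topology on $\mathcal M_d(\C)$, since every entry of $T$ is weak-$\ast$ continuous; thus $\mathcal A = T(B)$ is compact, as desired.

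The main obstacle is establishing that $\mathcal A$ is closed, not merely bounded. Boundedness is immediate, since $\norm{T(\eta)} \le \int_{-1}^0 \abs{\eta}\, d\norm{\mu_M} \le \Var M$ for every $\eta \in \mathfrak D$; but because $\mathfrak D$ is infinite-dimensional, a naive extraction from a maximizing sequence $(\eta_n)$ only produces a limit matrix $A_\ast$ via finite-dimensional compactness of $\{T(\eta_n)\}$, and one must still exhibit an $\eta_\ast \in \mathfrak D$ with $\int_{-1}^0 \eta_\ast\, d\mu_M = A_\ast$. This is exactly what weak-$\ast$ compactness of $B$ supplies: passing to a weak-$\ast$ convergent subsequence of $(\eta_n)$ produces a limit $\eta_\ast$ in $B$, and the weak-$\ast$ continuity of $T$ guarantees $T(\eta_\ast) = A_\ast$. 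Verifying that this weak-$\ast$ limit indeed has a representative in $\mathfrak D$ is handled entirely by the $L^\infty$--$L^1$ duality described above.
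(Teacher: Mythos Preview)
Your proof is correct and takes a genuinely different, cleaner route than the paper. The paper's argument picks a maximizing sequence $(\eta_n)$ and, for each of the $2d^2$ positive scalar measures $\mu_{ij}^{\pm}$ obtained by decomposing $\mu_M$ entrywise, extracts via Banach--Alaoglu a weak-$\ast$ limit in $L^\infty(\mu_{ij}^{\pm})$ separately; since these limits could a priori disagree, it then invokes a dedicated technical result (Proposition~\ref{prop:measure2} in the Appendix) to glue them into a single Borel function $\eta_\ast$ that coincides with each limit a.e.\ for the corresponding measure. Your argument avoids this gluing entirely by choosing from the outset a single dominating positive measure $\nu$ (you take $\nu = \norm{\mu_M}$; the sum $\sum_{i,j}\abs{(\mu_M)_{ij}}$ works just as well and is unambiguously a finite Borel measure). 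Since each $(\mu_M)_{ij}\ll\nu$, the entries of $T$ are weak-$\ast$ continuous functionals on $L^\infty(\nu)$, so $T$ is weak-$\ast$-to-norm continuous and $\mathcal A = T(B)$ is compact by Banach--Alaoglu; no coordination across several $L^\infty$ spaces is needed. The payoff is that your proof is shorter and renders Proposition~\ref{prop:measure2} (and its preparatory Lemma~\ref{lemma:measure1}) unnecessary for this result. One small remark: the paper only defines $\norm{\mu_M}$ as a number on the full interval, not as a set function on $\mathcal B_{[-1,0]}$; the extension to a positive measure is standard (and implicitly present via $\mu_{T_M}$ in the proof of Proposition~\ref{proposition:tvcoincides}), but using $\nu=\sum_{i,j}\abs{(\mu_M)_{ij}}$ sidesteps the issue.
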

    \begin{proof}
        Consider a sequence $(\eta_n)_{n \in \mathbb N}$ in $\mathfrak D$ such that 
        \[
        \lim_{n\to +\infty}\rho \left(\int_{-1}^0 \eta_n(\theta)dM(\theta)\right)=\sup_{\eta\in \mathfrak D} \rho \left(\int_{-1}^0 \eta(\theta)dM(\theta)\right).
        \]
        Let $\mu$ be the matrix-valued measure associated with $M$. We know that each coordinate $\mu_{ij}$ of $\mu$ can be written as $\mu_{ij}=\mu_{ij}^+-\mu_{ij}^-$, where $\mu_{ij}^+$ and $\mu_{ij}^-$ are positive measures. For $K=2 d^2$, we consider $\mu_1, \dotsc,\mu_K$ an enumeration of these $\mu_{ij}^+,\mu_{ij}^-$. Denoting the measurable space $\mathcal X=([-1,0],\mathcal{B}_{[-1,0]})$, we have $\eta_n\in L^\infty(\mu_k):=L^\infty(\mathcal X,\C,\mu_k)$ and $\norm{\eta_n}_{L^\infty(\mu_k)}\leq 1$ for every $k\in\{1, \dotsc,K\}$. By using the fact that the dual space $(L^1(\mu_k))^*$ is equal to $L^\infty(\mu_k)$, considering Banach--Alaoglu--Bourbaki theorem and after taking subsequences, we deduce the existence of $n_1, \dotsc,n_K\in \mathfrak D$ such that
        \[\eta_n\xrightharpoonup[n \to +\infty]{*} n_k \text{ in } L^\infty(\mu_k),\quad \forall k\in\{1, \dotsc,K\}.\]
        By Proposition~\ref{prop:measure2} in Appendix~\ref{app:measure}, there exists $\eta_\ast\in \mathfrak D$ such that for all $ k\in\{1, \dotsc,K\}$, we have $\eta_n\xrightharpoonup{*} \eta_\ast$ in $L^\infty(\mu_k)$ as $n \to +\infty$, that is,
        \[\lim_{n\to +\infty}\int_{-1}^0\eta_n\phi d\mu_k =\int_{-1}^0\eta_\ast\phi d\mu_k,\quad \forall \phi \in L^1(\mu_k).\]
        Taking $\phi\equiv1$ and using the continuity of $\rho (\cdot)$, we obtain
        \[ \rho\left(\int_{-1}^0 \eta_\ast(\theta)d\mu(\theta)\right)=\lim_{n \to +\infty}\rho \left(\int_{-1}^0 \eta_n(\theta)d\mu(\theta)\right)=\sup_{\eta\in \mathfrak D} \rho \left(\int_{-1}^0 \eta(\theta)dM(\theta)\right),\]
        as required.
        \end{proof}
        

\subsection{Characteristic function of perturbed systems}
\label{sec:Delta}

Out next preparatory results for the proof of Theorem~\ref{HS-Theorem} concern properties of the characteristic functions of perturbed systems of the form \eqref{sys:perturbed-intro}. We first prove that the condition $\rho_{\mathrm{HS}}(M) < 1$ ensures that such characteristic functions admit no roots in a neighborhood of the imaginary axis, whose size is independent of the perturbation $\varphi$.

\begin{lemma}\label{lemma:noroots}
Let $M\in \mathrm{NBV}([-1, 0],\mathcal M_d (\R))$, denote by $\mu$ the matrix-valued Borel measure associated with $M$, and suppose that $\rho_{\mathrm{HS}}(M)<1$. Then there exists $\nu>0$ such that, for all $\varphi \in \mathbf B$, there are no roots of the characteristic function of $\varphi_\ast\mu$
on the vertical strip $\{a+ bi\in \C : -\nu \leq a\leq \nu,\, b\in \R\}$.
\end{lemma}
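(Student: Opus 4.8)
The plan is to reduce the statement to a spectral radius estimate and then invoke Proposition~\ref{prop:max_principle1}. First I would record that, by the change-of-variables formula for the pushforward measure (the same computation carried out just after \eqref{sys:perturbed}, applied to the function $\theta \mapsto e^{s\theta}$), the characteristic function of $\varphi_\ast\mu$ is
\[
\Delta_{\varphi_\ast\mu}(s) = \det\left(\I - \int_{-1}^0 e^{s\varphi(\theta)}\,d\mu(\theta)\right), \qquad s \in \C.
\]
Writing $A_s^\varphi := \int_{-1}^0 e^{s\varphi(\theta)}\,d\mu(\theta)$, a number $s$ is a root of $\Delta_{\varphi_\ast\mu}$ if and only if $1$ is an eigenvalue of $A_s^\varphi$, which forces $\rho(A_s^\varphi) \ge 1$. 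Hence it suffices to produce $\nu > 0$, independent of $\varphi$, such that $\rho(A_s^\varphi) < 1$ for every $\varphi \in \mathbf B$ and every $s$ with $\abs{\Re s} \le \nu$.

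The key step is a uniform modulus bound on the integrand. Writing $s = a + ib$ with $\abs{a} \le \nu$ and using $\varphi(\theta) \in [-1, 0]$, one has $\abs{e^{s\varphi(\theta)}} = e^{a\varphi(\theta)}$ with $a\varphi(\theta) \in [-\nu, \nu]$, so that $\abs{e^{s\varphi(\theta)}} \le e^\nu$ for every $\theta \in [-1, 0]$, every $b \in \R$, and every $\varphi \in \mathbf B$. The crucial point is that this bound depends neither on $b = \Im s$ nor on $\varphi$. Setting $\eta_s := e^{-\nu} e^{s\varphi(\cdot)}$, which is Borel-measurable since $\varphi$ is, one gets $\eta_s \in \mathfrak D$, and the scaling property $\rho(cB) = c\,\rho(B)$ for $c > 0$ together with Proposition~\ref{prop:max_principle1} yields
\[
\rho(A_s^\varphi) = e^\nu\, \rho\left(\int_{-1}^0 \eta_s(\theta)\,d\mu(\theta)\right) \le e^\nu\, \rho_{\mathrm{HS}}(M).
\]

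To conclude, since $\rho_{\mathrm{HS}}(M) < 1$, I would choose $\nu > 0$ small enough that $e^\nu \rho_{\mathrm{HS}}(M) < 1$ (any $\nu > 0$ works if $\rho_{\mathrm{HS}}(M) = 0$, and otherwise one takes any $\nu < -\ln \rho_{\mathrm{HS}}(M)$). With this $\nu$, the displayed estimate gives $\rho(A_s^\varphi) < 1$ throughout the strip $\abs{\Re s} \le \nu$, so $1$ is never an eigenvalue of $A_s^\varphi$ and $\Delta_{\varphi_\ast\mu}$ has no root there, for any $\varphi \in \mathbf B$. I do not expect a genuine obstacle here: the substantive work is already contained in Proposition~\ref{prop:max_principle1}, which identifies $\rho_{\mathrm{HS}}(M)$ with the supremum of $\rho(\int_{-1}^0 \eta\,d\mu)$ over $\eta \in \mathfrak D$. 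The only point deserving care is the uniformity in $\varphi$ and in $\Im s$, which is secured precisely because the modulus $\abs{e^{s\varphi(\theta)}}$ depends only on $\Re s$ and on $\varphi(\theta) \in [-1,0]$, and is therefore controlled by $e^\nu$ simultaneously for all perturbations.
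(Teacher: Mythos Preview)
Your proof is correct, but it follows a different route from the paper's. The paper does not invoke Proposition~\ref{prop:max_principle1}; instead it works only with the original definition of $\rho_{\mathrm{HS}}$ as a supremum over $\mathfrak C$. Concretely, the paper observes that for $s = ib$ purely imaginary one has $e^{ib\varphi(\cdot)} = e^{i\xi(\cdot)}$ with $\xi = b\varphi \in \mathfrak C$, so $\rho\bigl(\int_{-1}^0 e^{ib\varphi(\theta)}\,dM(\theta)\bigr) \le \rho_{\mathrm{HS}}(M)$ directly. It then handles the real part perturbatively: the matrices $T_{\varphi,b}(a) = \int_{-1}^0 e^{(a+ib)\varphi(\theta)}\,dM(\theta)$ lie in a fixed bounded set for $\abs{a}\le 1$, the spectral radius is uniformly continuous on its closure, and $\lVert T_{\varphi,b}(a)-T_{\varphi,b}(0)\rVert \le \Var(M)\,\lvert e^{-a}-1\rvert$ uniformly in $b$ and $\varphi$, so $\rho(T_{\varphi,b}(a))$ stays below $\tfrac{1+\rho_{\mathrm{HS}}(M)}{2}$ for $\abs{a}$ small.

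Your scaling trick $\eta_s = e^{-\nu} e^{s\varphi(\cdot)} \in \mathfrak D$ is more direct and yields an explicit bound $\nu < -\ln\rho_{\mathrm{HS}}(M)$, whereas the paper's $\nu$ is implicit through a modulus of continuity. The price you pay is the dependence on Proposition~\ref{prop:max_principle1}, which is itself nontrivial (it rests on the matrix maximum principle of Lemma~\ref{lemma:max_principle}); the paper's argument for this lemma is self-contained in the sense that it uses only the bare definition of $\rho_{\mathrm{HS}}$.
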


\begin{proof}
Note that, for a given $\varphi \in \mathbf B$, using the properties of a pushforward measure, we have that the characteristic function $\Delta_\varphi$ of $\varphi_\ast \mu$ is given by
\[\Delta_\varphi(s) = \det\left(\I-\int_{-1}^0 e^{s\varphi(\theta)}dM(\theta) \right), \qquad s \in \mathbb C.\]

Consider the set 
\[\mathcal{T} = \left\{T_{\varphi,b}(a)\in \mathcal{M}_d(\R): -1 \leq a \leq 1,\, b\in \R,\text{ and } \varphi \in \mathbf B\right\},\]
where
\[
T_{\varphi,b}(a)=\int_{-1}^0 e^{(a + i b)\varphi(\theta)}dM(\theta).
\]
        Note that $\mathcal{T}$ is a bounded set since, for any $T_{\varphi,b}(a)\in \mathcal{T}$,
        \[\norm{T_{\varphi,b}(a)}\leq \left(\Var(M)_{\rvert [-1,0]}\right)\sup_{\theta\in[-1,0]}\abs{e^{(a+ib)\varphi(\theta)}} \leq \left(\Var(M)_{\rvert [-1,0]}\right) e.\]
        Therefore, the closure $\cl\mathcal{T}$ of $\mathcal{T}$ is compact and the restriction $\rho_{\rvert \cl\mathcal{T}}$ of the spectral radius function to $\cl\mathcal T$ is uniformly continuous. Let
        \[\epsilon=\frac{1-\rho_{\mathrm{HS}}(M)}{2}.\]
        Then there exists $\delta>0$ such that, for any $A,B\in \cl \mathcal{T}$ such that $\norm{A-B}<\delta$, we have
        \[\rho(A)<\rho(B)+\epsilon.\]
        On the other hand, for $T_{\varphi,b}(a),T_{\varphi,b}(0)\in \mathcal{T}$,
     \begin{align*}
            \lVert T_{\varphi,b}(a)-T_{\varphi,b}(0)\rVert&=\left\lVert\int_{-1}^0(e^{a\varphi(\theta)}-1)e^{ib\varphi(\theta)}dM(\theta)\right\rVert\leq \Var(M)_{\rvert [-1,0]}\sup_{\theta\in [-1,0]}\abs{e^{a \varphi(\theta)} - 1}\\
            &\leq \Var(M)_{\rvert [-1,0]} \abs{e^{-a} - 1}.
        \end{align*}  
        Let $\nu \in (0, 1]$ be such that, for every $a\in [-\nu,\nu]$, it holds $\Var(M)_{\rvert [-1,0]} \abs{e^{-a} - 1} < \delta$. Thus 
        \[\rho(T_{\varphi,b}(a))<\rho(T_{\varphi,b}(0))+\epsilon\leq
        \frac{\rho_{\mathrm{HS}}(M)+1}{2}<1, \]
        for every $a\in [-\nu,\nu]$, $b\in \R$, and $\varphi \in \mathbf B$. Therefore, $\I - T_{\varphi,b}(a)$ is invertible for all $a\in [-\nu,\nu]$, $b\in \R$, and $\varphi \in \mathbf B$, resulting in
        \[\det \left(\I - T_{\varphi,b}(a)\right) = \det\left(\I- \int_{-1}^0 e^{(a + i b) \varphi(\theta)} dM(\theta)\right)\ne 0. \qedhere\]
        \end{proof}

The last technical result on characteristic functions of perturbed systems of the form \eqref{sys:perturbed-intro} that we shall need is the following.

\begin{lemma}\label{lemma:roots}
Let $M \in \mathrm{NBV}([-1, 0],\mathcal M_d (\R))$, denote by $\mu$ the matrix-valued Borel measure associated with $M$, and suppose that $\rho_{\mathrm{HS}}(M) \geq 1$.
Then, for every $\epsilon > 0$ and $\delta > 0$, there exists $\varphi \in \mathbf B$ such that $\norm{\varphi-\id}_\infty < \epsilon$, $\varphi_\ast\mu\in \mathcal G$, and such that the characteristic function of $\varphi_\ast \mu$
admits a root $s$ with $\Re s \geq \ln \rho_{\mathrm{HS}}(M) - \delta$.
\end{lemma}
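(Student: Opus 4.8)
The plan is to reduce the statement to a purely algebraic question about the matrices $\int_{-1}^0 e^{a\theta}e^{i\beta(\theta)}\,dM(\theta)$ and then realize the desired configuration by a high-frequency perturbation of the identity. Write $\rho_0 = \rho_{\mathrm{HS}}(M)\ge 1$ and recall that the characteristic function of $\varphi_\ast\mu$ is $\Delta_\varphi(s) = \det(\I - \int_{-1}^0 e^{s\varphi(\theta)}\,dM(\theta))$. The key realization step is the following. Fix $a\ge 0$ and a measurable $\beta\colon[-1,0]\to\R$, and for large $b>0$ set $\varphi_b(\theta) = \theta + c_b(\theta)/b$, where $c_b(\theta)$ is the representative modulo $2\pi$ of $\beta(\theta)-b\theta$ chosen (with a sign depending on whether $\theta$ is near $0$ or near $-1$) so that $\varphi_b$ stays valued in $[-1,0]$. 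Then $\norm{\varphi_b-\id}_\infty\le 2\pi/b$ and $e^{(a+ib)\varphi_b(\theta)} = e^{a\varphi_b(\theta)}e^{i\beta(\theta)}\to e^{a\theta}e^{i\beta(\theta)}$ boundedly, so by dominated convergence $\int_{-1}^0 e^{(a+ib)\varphi_b}\,dM\to \int_{-1}^0 e^{a\theta}e^{i\beta(\theta)}\,dM(\theta)$. Thus, if the latter matrix admits $1$ as a (simple) eigenvalue, analytic perturbation of that eigenvalue together with the continuity-of-roots argument already used in the proof of Theorem~\ref{thm:spectral-abscissa-continuous} produces, for $b$ large and $\varphi=\varphi_b$, a genuine root $s$ of $\Delta_\varphi$ with $\Re s$ arbitrarily close to $a$. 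It therefore suffices to find $a\ge \ln\rho_0-\delta$ and $\beta\in\mathfrak C$ such that $\int_{-1}^0 e^{a\theta}e^{i\beta(\theta)}\,dM(\theta)$ has $1$ as an eigenvalue. Since relocating atoms does not change $\rho_{\mathrm{HS}}$, I first move any atoms of $\mu$ at $0$ or $-1$ slightly into $(-1,0)$ by a near-identity map and absorb it into the final $\varphi$; hence I assume $\norm{\mu}(\{0\}) = 0$.

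The heart of the matter is a lower bound for $F(a):=\sup_{\beta\in\mathfrak C}\rho(\int_{-1}^0 e^{a\theta}e^{i\beta(\theta)}\,dM(\theta))$, which by Proposition~\ref{prop:max_principle1} applied to the measure $e^{a\theta}\mu$ equals $\sup_{\eta\in\mathfrak D}\rho(\int_{-1}^0 \eta(\theta)e^{a\theta}\,dM(\theta))$. Let $\eta_0\in\mathfrak D$ be a maximizer of $\rho_0 = \sup_{\eta\in\mathfrak D}\rho(\int\eta\,dM)$, which exists by the preceding proposition. For $a\ge 0$ the function $\tilde\eta:=\eta_0\,e^{-a(\theta+1)}$ satisfies $\abs{\tilde\eta(\theta)}\le e^{-a(\theta+1)}\le 1$, so $\tilde\eta\in\mathfrak D$, and since $\tilde\eta(\theta)e^{a\theta} = e^{-a}\eta_0(\theta)$ we get $F(a)\ge\rho(\int e^{-a}\eta_0\,dM) = e^{-a}\rho_0$. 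Likewise, for $a\le 0$ the choice $\tilde\eta := \eta_0\,e^{-a\theta}\in\mathfrak D$ gives $F(a)\ge\rho_0$. In particular $F(a)>1$ for every $a<\ln\rho_0$ when $\rho_0>1$, while $F(a)\le\int_{-1}^0 e^{a\theta}\,d\norm{\mu}(\theta)\to 0$ as $a\to+\infty$ because $\norm{\mu}(\{0\}) = 0$.

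To produce the exact eigenvalue, fix $a_1\in[\ln\rho_0-\delta,\ln\rho_0)$ (nonnegative when $\rho_0>1$); by the lower bound there is $\beta_1\in\mathfrak C$ with $h(a_1)>1$, where $h(a):=\rho(\int_{-1}^0 e^{a\theta}e^{i\beta_1(\theta)}\,dM(\theta))$. Since $h$ is continuous and $h(a)\to 0$ as $a\to+\infty$, the intermediate value theorem yields $a^\flat\ge a_1\ge\ln\rho_0-\delta$ with $h(a^\flat) = 1$. Thus $\int_{-1}^0 e^{a^\flat\theta}e^{i\beta_1(\theta)}\,dM(\theta)$ has an eigenvalue $\lambda$ of modulus $1$, say $\lambda = e^{i\psi}$; replacing $\beta_1$ by $\beta^\flat := \beta_1-\psi$ (a constant shift, still in $\mathfrak C$) turns this eigenvalue into exactly $1$. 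Feeding $(a^\flat,\beta^\flat)$ into the realization step of the first paragraph then gives a $\varphi$ with $\norm{\varphi-\id}_\infty<\epsilon$ and a root $s$ of $\Delta_\varphi$ with $\Re s\ge \ln\rho_0-\delta$, completing the proof.

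The main obstacle is the exactness: the high-frequency construction can only reproduce the target matrix $\int e^{a\theta}e^{i\beta}\,dM$ in the limit $b\to+\infty$, never exactly, because matching the modulus $e^{a\theta}$ forces $\varphi = \id$ and thereby destroys the phase freedom. Converting the approximate eigenvalue $1$ into a genuine zero of $\Delta_\varphi$ is where one must carefully invoke analytic perturbation of a simple eigenvalue — with a transversality check that the derivative $w^{\ast}(\int_{-1}^0 \theta\, e^{a^\flat\theta}e^{i\beta^\flat}\,dM)v$ is nonzero, which one arranges by taking $a^\flat$ at a transversal crossing of $h$ through $1$ (perturbing $\beta_1$ if necessary) — followed by continuity of roots à la Hurwitz. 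A secondary technical point is the knife-edge case $\rho_0 = 1$, where the strict inequality $F(a)>1$ is only available for $a<0$ and needs the mass of $\mu$ not to be entirely concentrated near $\theta = 0$; this is handled by the preliminary relocation of atoms together with a limiting argument.
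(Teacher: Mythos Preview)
Your approach is genuinely different from the paper's. The paper discretizes: it approximates a near-maximizer of $\rho_{\mathrm{HS}}(M)$ by a simple function $\sum_k \xi_k \mathbbm 1_{E_k}$ with $\sup E_k - \inf E_k < \epsilon/2$, picks rationally independent $\tau_k$ with $-\tau_k$ close to $E_k$ and bounded away from $0$, and takes $\varphi \equiv -\tau_k$ on $E_k$. The perturbed system is then the finite-delay difference equation $x(t) = \sum_k A_k x(t-\tau_k)$ with $A_k = \mu(E_k)$, to which the classical Hale--Silkowski criterion (Theorem~\ref{HS-Theorem}) applies directly after the rescaling $y(t) = e^{-\alpha t} x(t)$. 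Your high-frequency construction $\varphi_b = \id + c_b/b$ avoids invoking Theorem~\ref{HS-Theorem} as a black box, but at the cost of more delicate analysis.

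Your handling of the case $\rho_0 = 1$ is a genuine gap. The lower bound you obtain for $a \le 0$ via $\tilde\eta = \eta_0 e^{-a\theta}$ gives only $F(a) \ge \rho_0 = 1$, never strict inequality, so you cannot produce $\beta_1$ with $h(a_1) > 1$ and the intermediate-value step has nothing to work with. The appeal to ``relocation of atoms together with a limiting argument'' does not supply the missing strictness: relocating atoms away from $0$ does not change $\rho_{\mathrm{HS}}$, and the sup over $\mathfrak C$ need not be attained when $\rho_0=1$. By contrast, the paper's reduction handles both cases uniformly: the choice $\tau_k > \epsilon/8$ combined with $\alpha = -\delta/2$ and $\delta_1=\epsilon\delta/16$ makes the rescaled Hale--Silkowski condition fail, and Theorem~\ref{HS-Theorem} delivers the root. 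A secondary point: your transversality and simple-eigenvalue worries are unnecessary. Since $g_b(z) := \Delta_{\varphi_b}(z + ib) \to g(z) := \det\bigl(\I - \int_{-1}^0 e^{z\theta} e^{i\beta^\flat(\theta)}\,dM(\theta)\bigr)$ locally uniformly in $z$, with $g(a^\flat) = 0$ and $g \not\equiv 0$ (after atom relocation $g(z) \to 1$ as $\Re z \to +\infty$), Hurwitz's theorem alone yields a zero of $g_b$ near $a^\flat$; no eigenvalue-perturbation argument is needed.
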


\begin{proof}
To simplify the notations in the proof, set $\rho_0 = \rho_{\mathrm{HS}}(M)$. Let $\epsilon > 0$ and $\delta > 0$ be given. 
Note that it suffices to prove the result for $\epsilon>0$ and $\delta>0$ small enough, and thus we assume, without loss of generality, that $\epsilon < 1$, and $\delta < 2 \ln \rho_0$ in the case $\rho_0 > 1$. We set
\begin{equation}
\label{eq:delta-1}
\delta_1 = \begin{dcases*}
\frac{\delta}{2} & if $\rho_0 > 1$, \\
\frac{\epsilon \delta}{16} & if $\rho_0 = 1$.
\end{dcases*}
\end{equation}

By Proposition~\ref{prop:max_principle1}, there is a simple function $\eta_0\in \mathfrak D$ such that
\begin{equation}\label{eq:lemma:expinstab1}
\rho\left(\int_{-1}^0 \eta_0 (\theta)dM(\theta)\right)\ge \rho_0 e^{-\delta_1}.
\end{equation}
Without loss of generality, we write $\eta_0$ as
\[\eta_0=\sum_{k=1}^N \xi_k\mathbbm{1}_{E_k},\]
with $N \in \mathbb N$, $E_k \in \mathcal B_{[-1, 0]}$ and $\sup E_k - \inf  E_k<\frac{\epsilon}{2}$ for all $k \in \{1, \dotsc, N\}$, and $\max_k \abs{\xi_k}\leq 1$. Now we take $\tau_1, \dotsc,\tau_N\in (0, 1)$ rationally independent so that, for all $k \in \{1, \dotsc, N\}$, we have
\[ -\tau_k \in \left(\max\left\{\inf E_k-\frac{\epsilon}{4}, -1\right\}, \min\left\{\sup E_k+\frac{\epsilon}{4}, -\frac{\epsilon}{8}\right\}\right).\]
Note that all the above intervals have length at least $\frac{\epsilon}{8}$ and that
\begin{equation}
\label{eq:bound-tau}
\tau_k > \frac{\epsilon}{8} \quad \text{ for all } k \in \{1, \dotsc, N\}.
\end{equation}

We define $\varphi \colon [-1,0]\to [-1,0]$ so that $\varphi_{\rvert E_k}=-\tau_k$ for all $k \in \{1, \dotsc, N\}$. It clearly holds that $\varphi \in \mathbf B$, $\norm{\varphi - \id}_\infty<\epsilon$, $\varphi_\ast \mu \in \mathcal N \subset \mathcal G$, and, if we set $A_k=\int_{E_k} dM(\theta)$ for every $k\in \{1, \dotsc,N\}$, then the perturbed system \eqref{sys:perturbed-intro}
can be simply written as 
\begin{equation}
\label{eq:lemma:expinstab-sum}
x(t)=\sum_{k=1}^N A_k x(t-\tau_k).
\end{equation}
On the other hand, from \eqref{eq:lemma:expinstab1} we obtain
\[\rho\left(\sum_{k=1}^N \xi_k A_k\right)\ge \rho_0 e^{-\delta_1}.\]
Set $\beta = \rho_0 e^{-\delta_1}$ and $\alpha = \ln \rho_0 - \frac{\delta}{2}$. It follows from \eqref{eq:delta-1} and \eqref{eq:bound-tau} that $0 < \frac{e^{\alpha \tau_k}}{\beta} \leq 1$ for every $k \in \{1, \dotsc, N\}$ and 
\[\rho\left(\sum_{k=1}^N \frac{\xi_ke^{\alpha \tau_k}}{\beta} e^{-\alpha\tau_k}A_k\right)\ge \frac{\rho_0 e^{-\delta_1}}{\beta} = 1.\]
From Proposition~\ref{prop:max_principle}, we have
\[\max_{\theta\in[0,2\pi]^N} \rho \left( \sum_{k=1}^N e^{-\alpha\tau_k}A_k e^{i\theta_k} \right) \geq 1.\]
Thus, from Theorem~\ref{HS-Theorem}, the system 
\begin{equation}\label{eq:lemma:expinstab2}
y(t)=\sum_{k=1}^N e^{-\alpha \tau_k}A_k y(t-\tau_k)
\end{equation}
is not exponentially stable, implying that its characteristic function admits a root $z$ with $\Re z > -\frac{\delta}{2}$. Note that Systems~\eqref{eq:lemma:expinstab-sum} and \eqref{eq:lemma:expinstab2} are related by the change of variables $x(t) = e^{\alpha t} y(t)$, and hence the spectrum of \eqref{eq:lemma:expinstab-sum} is simply a translation of that of \eqref{eq:lemma:expinstab2} by $\alpha$. In particular, the characteristic function of  $\varphi_\ast \mu$ admits a root $s$ with $\Re s > \alpha - \frac{\delta}{2} = \ln \rho_0 - \delta$.
\end{proof}

\subsection{Proof of Theorem~\ref{GHS-Theorem} and further comments}

    \begin{proof}[Proof of Theorem \ref{GHS-Theorem}]
        It is clear from the definition that 2.\ implies 3.
        
        To show that 1.\ implies 2., let $\varphi \in \mathbf B$ be such that $\varphi_\ast \mu_M \in \mathcal{G}$. From Lemma~\ref{lemma:noroots}, there is $\alpha>0$ such that there are no roots of the characteristic function $\Delta_\varphi$ of $\varphi_* \mu_M$, namely
        \[\Delta_\varphi(s)=\det\left(\I-\int_{-1}^0e^{s\varphi(\theta)}dM(\theta)\right) \qquad s \in \mathbb C,\]
        on the vertical strip $\{a+ib\in \C : a\in [-\alpha, \alpha],\, b\in\R\}$. 
        We claim that $s=a+ib$ is not a root of $\Delta_\varphi$ if $a\ge 0$. Indeed, given $s = a + i b$ with $a \geq 0$, for $\eta\colon[-1,0]\to \bar{B}_1(0)$ given by $\eta(\theta)=e^{s\varphi(\theta)}$, it is clear that $\eta \in \mathfrak D$, so from the hypothesis and Proposition~\ref{prop:max_principle1} we obtain
        \[ \rho \left(\int_{-1}^0 e^{s\varphi(\theta)}dM(\theta)\right)\leq\sup_{\eta\in \mathfrak D} \rho \left(\int_{-1}^0 \eta(\theta)dM(\theta)\right)<1.\]
        This implies that $1$ is not an eigenvalue of $\int_{-1}^0 e^{s\varphi(\theta)}dM(\theta)$, i.e., $s$ is not a root of $\Delta_\varphi$. Hence,  $\omega_{\varphi_\ast\mu}\leq -\alpha$.

        To show that 3.\ implies 1., we argue by contradiction and hence assume that 1.\ does not hold true, i.e., $\rho_{\mathrm{HS}}(M) \geq 1$. Since $M$ is locally strongly stable with uniform rate, we can find $\alpha>0$ and $\epsilon>0$ such that, for all $\varphi\in \mathbf{B}$ with $\lVert\varphi - \id\rVert_{\infty}<\epsilon$ and $\varphi_\ast\mu_M\in \mathcal{G}$, all the roots $s$ of 
        \[\Delta_\varphi(s)=\det\left(\I-\int_{-1}^0e^{s\varphi(\theta)}dM(\theta)\right)\]
        satisfy $\Re s \leq -\alpha$. This is in contradiction with Lemma~\ref{lemma:roots}.
        \end{proof}

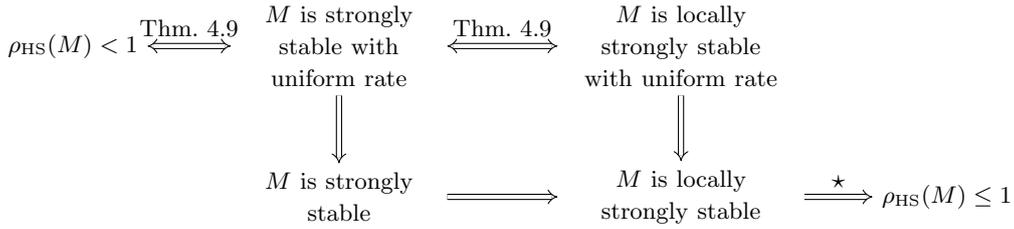
\begin{figure}[ht]
\centering
\begin{tikzpicture}
\node (rhoHS) at (0, 0) {\small $\rho_{\mathrm{HS}}(M) < 1$};
\node[text width=2.6cm, align = center] (SSUR) at (3.5, 0) {\small $M$ is strongly stable with uniform rate};
\node[text width=3cm, align = center] (LSSUR) at (8, 0) {\small $M$ is locally strongly stable with uniform rate};
\node[text width=2.6cm, align = center] (SS) at (3.5, -2) {\small $M$ is strongly stable};
\node[text width=3cm, align = center] (LSS) at (8, -2) {\small $M$ is locally strongly stable};
\node (rhoHSleq) at (11.5, -2) {\small $\rho_{\mathrm{HS}}(M) \leq 1$};

\draw[double equal sign distance, double, Implies-Implies] (rhoHS) --node[midway, above] {\small Thm.~\ref{GHS-Theorem}} (SSUR);
\draw[double equal sign distance, double, Implies-Implies] (SSUR) --node[midway, above] {\small Thm.~\ref{GHS-Theorem}} (LSSUR);
\draw[double equal sign distance, double, -Implies] (SSUR) -- (SS);
\draw[double equal sign distance, double, -Implies] (LSSUR) -- (LSS);
\draw[double equal sign distance, double, -Implies] (SS) -- (LSS);
\draw[double equal sign distance, double, -Implies] (LSS) --node[midway, above] {$\star$} (rhoHSleq);
\end{tikzpicture}
\caption{Diagram of the results of the paper on Conjecture~\ref{GHS-Conjecture}.}
\label{fig:results}
\end{figure}

Regarding Conjecture~\ref{GHS-Conjecture}, the results presented in this paper allow one to obtain the diagram given in Figure~\ref{fig:results}. Indeed, by Theorem~\ref{GHS-Theorem}, we have that \eqref{GHSC} implies strong stability with uniform rate, and then strong stability, and finally local strong stability. 
An argument for the implication $\star$ goes by contraposition as follows. If $\rho_{\mathrm{HS}}(M) > 1$, then by taking $\delta=\frac{\ln\rho_{\mathrm{HS}}(M)}{2}$ we can find, according to Lemma~\ref{lemma:roots},
a function $\varphi$ arbitrarily close to the identity function
such the corresponding characteristic function has a root with positive real part, hence $M$ cannot be locally strongly stable.



\appendix

\section{Appendix}
\label{sec:appendix}

\subsection{Results in measure theory}
\label{app:measure}

We gather here some measure-theoretical results needed in the paper. The first one asserts that some classical properties for scalar-valued functions of bounded variation and their associated Borel measures remain true in the matrix-valued context of this paper. In order to state that result, we set 
\[
\norm{\mu}_c([a,b])=\sup\left\{\sum_{j=1}^{+\infty} \norm{\mu(E_j)} :  E_1, E_2, \dotsc \in \mathcal{B}_{[a,b]} \text{ are disjoint and } \bigcup_{j=1}^{+\infty} E_j = [a,b]\right\}.
\]
We then have the following proposition (see also \cite[Chapter~3, Theorem~5.6]{Gripenberg1990} for the second part of the statement).

\begin{proposition}\label{proposition:tvcoincides}
    Let $M\colon [a, b]\to \mathcal M_d (\R)$ be of normalized bounded variation and $\mu_M$ be the measure associated with $M$ in the sense of Remark~\ref{remk:M-mu}. Then
    \begin{align}
    \label{eq:variations-coincide}
    \Var (M)_{\rvert [a,b]}=\norm{\mu_M}([a,b])=\norm{\mu_M}_c([a,b]).
    \end{align}
    Moreover, for $\phi\colon [a,b]\to \R^d$ and $\psi\colon [a,b]\to \C$ bounded Borel-measurable functions, we have
    \[\left\lvert\int_a^b dM(\theta)\phi(\theta)\right\rvert \leq \norm{\phi}_{\infty} \Var (M)_{\rvert [a,b]}\text{ and } \left\lVert\int_a^b \psi(\theta) dM(\theta)\right\rVert \leq \norm{\psi}_{\infty} \Var (M)_{\rvert [a,b]}.\]
\end{proposition}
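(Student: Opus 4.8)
The plan is to establish the three equalities in \eqref{eq:variations-coincide} through the chain of inequalities
\[
\Var(M)_{\rvert[a,b]} \;\le\; \norm{\mu_M}([a,b]) \;\le\; \norm{\mu_M}_c([a,b]) \;\le\; \Var(M)_{\rvert[a,b]},
\]
after which all three quantities coincide, and then to deduce the two integral bounds from the first equality by approximation with simple functions. The middle inequality is immediate: every finite partition $E_1,\dotsc,E_k$ of $[a,b]$ into disjoint Borel sets is also a countable one (complete it with empty sets, which contribute $\norm{\mu_M(\emptyset)}=0$), so the supremum defining $\norm{\mu_M}_c$ runs over a larger family than the one defining $\norm{\mu_M}$.

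For the first inequality I would use the correspondence $M(t)=\mu_M([a,t])$ for $t\in(a,b]$ together with the normalization $M(a)=0$. Given a partition $a=t_1<\dotsb<t_{k+1}=b$, one has $M(t_2)-M(t_1)=\mu_M([a,t_2])$ and $M(t_{i+1})-M(t_i)=\mu_M((t_i,t_{i+1}])$ for $i\ge 2$, whence
\[
\sum_{i=1}^k \norm{M(t_{i+1})-M(t_i)} = \norm{\mu_M([a,t_2])} + \sum_{i=2}^k \norm{\mu_M((t_i,t_{i+1}])}.
\]
Since $[a,t_2],(t_2,t_3],\dotsc,(t_k,t_{k+1}]$ form a finite partition of $[a,b]$ into disjoint Borel sets, the right-hand side is bounded by $\norm{\mu_M}([a,b])$; taking the supremum over partitions gives the claim. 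Note that any atom of $\mu_M$ at $a$ is absorbed in the first term, which is why I keep the leftmost piece closed at $a$.

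The heart of the argument, and the step I expect to be the main obstacle, is the last inequality $\norm{\mu_M}_c([a,b])\le\Var(M)_{\rvert[a,b]}$, where the variation over arbitrary countable Borel partitions must be controlled by the one over interval partitions. Let $\Lambda_0=\sum_{p,q}\abs{(\mu_M)_{pq}}$ be the finite positive measure given by the sum of the entrywise total-variation measures; by equivalence of norms on $\mathcal M_d(\R)$ there is $C>0$ with $\norm{\mu_M(A)}\le C\,\Lambda_0(A)$ for every Borel set $A$. Fix a countable partition $\{E_j\}_{j\ge1}$, an integer $N$, and $\epsilon>0$. Using outer regularity of $\Lambda_0$ on the compact metric space $[a,b]$, I would approximate each $E_j$ with $1\le j\le N$ by a finite union $V_j$ of disjoint intervals (the piece containing $a$ taken closed at $a$, the rest half-open) with $\Lambda_0(E_j\triangle V_j)$ as small as desired; after disjointifying by replacing $V_j$ with $V_j\setminus\bigcup_{l<j}V_l$ (which preserves being a finite union of intervals and, since the $E_j$ are disjoint, keeps $\Lambda_0(E_j\triangle V_j)$ small), the resulting intervals $\{I_{j,l}\}$ are pairwise disjoint. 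Then $\norm{\mu_M(V_j)}\le\sum_l\norm{\mu_M(I_{j,l})}$ and $\norm{\mu_M(E_j)-\mu_M(V_j)}\le C\,\Lambda_0(E_j\triangle V_j)$ give
\[
\sum_{j=1}^N \norm{\mu_M(E_j)} \;\le\; \sum_{j=1}^N\sum_l \norm{\mu_M(I_{j,l})} + C\sum_{j=1}^N \Lambda_0(E_j\triangle V_j).
\]
Since the $I_{j,l}$ are pairwise disjoint intervals, they extend to a finite interval partition of $[a,b]$ of the type used for the first inequality, whose associated sum is at most $\Var(M)_{\rvert[a,b]}$; dropping the complementary intervals only decreases the sum, so the first double sum is at most $\Var(M)_{\rvert[a,b]}$. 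Choosing the approximations so that the second sum is below $\epsilon$ and then letting $\epsilon\to0$ and $N\to+\infty$ yields the inequality. The delicate bookkeeping is precisely ensuring that finitely many Borel sets can be simultaneously approximated by \emph{disjoint} finite unions of intervals with all symmetric-difference errors kept under control.

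Finally, for the integral inequalities I would first treat simple functions. If $\psi=\sum_i c_i\mathbbm{1}_{A_i}$ with the $A_i$ disjoint, then $\norm{\int_a^b\psi\,dM}=\norm{\sum_i c_i\mu_M(A_i)}\le\norm{\psi}_\infty\sum_i\norm{\mu_M(A_i)}\le\norm{\psi}_\infty\,\norm{\mu_M}([a,b])=\norm{\psi}_\infty\Var(M)_{\rvert[a,b]}$, using the equality just proved; the vector-valued estimate for $\phi$ follows identically from $\abs{\mu_M(A_i)v}\le\norm{\mu_M(A_i)}\abs{v}$. The general case is then obtained by approximating a bounded Borel-measurable integrand uniformly by simple functions and passing to the limit, the bound being stable under uniform convergence exactly because of the estimate established for simple functions.
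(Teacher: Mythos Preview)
Your proposal is correct, and the overall architecture (the chain of inequalities, the treatment of the first inequality via the interval partition $[a,t_2],(t_2,t_3],\dotsc$, and the simple-function argument for the integral bounds) matches the paper's. The genuine difference lies in the main step, bounding the measure-theoretic variation by the function variation.

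The paper does not use regularity or approximation by interval algebras. Instead it introduces the nondecreasing scalar function $T_M(x)=\Var(M)_{\rvert[a,x]}$ and its associated positive Lebesgue--Stieltjes measure $\mu_{T_M}$, first proving $\norm{M(\hat s)-M(\hat t)}\le T_M(\hat s)-T_M(\hat t)$ for $s\ge t$ (including one-sided limits), hence $\norm{\mu_M(I)}\le\mu_{T_M}(I)$ for every subinterval $I$. It then extends the inequality $\norm{\mu_M(E)}\le\mu_{T_M}(E)$ from the algebra of finite unions of intervals to all Borel sets by the monotone class theorem, which immediately gives $\norm{\mu_M}([a,b])\le\mu_{T_M}([a,b])=T_M(b)=\Var(M)_{\rvert[a,b]}$. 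The remaining inequality $\norm{\mu_M}_c\le\norm{\mu_M}$ is dispatched separately by truncating a countable partition and using continuity of $\mu_M$ from above. Your route collapses these two steps into one, going directly from $\norm{\mu_M}_c$ to $\Var(M)$ via outer regularity of the auxiliary scalar measure $\Lambda_0$ and interval approximation. The paper's argument is cleaner in that it sidesteps exactly the ``delicate bookkeeping'' you flag (controlling all symmetric differences after disjointification); on the other hand, your argument avoids the monotone class theorem and the construction of $\mu_{T_M}$, trading a structural tool for an explicit approximation. Both are standard measure-theoretic devices and neither is materially shorter.
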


\begin{proof}
    Given $a=t_1<t_2<\dotsb<t_{k+1}=b$, we have
    \[M(t_{i+1})-M(t_{i})=\begin{cases}M(t_{2})=\mu_M([t_1,t_2]) & \text{if } i=1,
    \\
\mu_M([a,t_{i+1}])-\mu_M([a,t_i])=\mu_M((t_i,t_{i+1}]) & \text{if } i\ge 2.
\end{cases}\]
So the sets defined by $E_1=[t_1,t_2]$ and $E_i=(t_i,t_{i+1}], i\in \{2, \dotsc,k\}$ are disjoint, belong to $\mathcal{B}_{[a,b]}$, and $\bigcup_{i=1}^k E_i=[a,b]$. Hence $\Var(M)_{\rvert [a,b]}\leq \norm{\mu_M}([a,b])$.

To prove the converse inequality, we define $T_M(x)=\Var(M)_{\rvert [a,x]}$. We claim that $\norm{M(s)-M(t)}\leq T_M(s)-T_M(t)$, for every $s,t\in [a,b]$ with $s\geq t$. This is clearly true for $t = a$ or $t = s$ and, for $t > a$ and $s > t$, it is not difficult to see that
\begin{align*}
    T_M(s) & =\sup\left\{\sum_{i=1}^k \norm{M(t_{i+1})-M(t_{i})}: k\in \N \text{ and }a=t_1<t_2<\dotsb<t_{k+1}=s \right\}\\
    & = \sup\left\{\sum_{i=1}^k \norm{M(t_{i+1})-M(t_{i})}: k\in \N \text{ and }a=t_1<\dotsb<t_{j+1}=t<\dotsb<t_{k+1}=s \right\}.
\end{align*}
From this, it follows that
\begin{align*}
    T_M(s) & \ge \sup\left\{\sum_{i=1}^{j} \norm{M(t_{i+1})-M(t_{i})}: j\in \N \text{ and }a=t_1<\dotsb<t_{j+1}=t \right\}+ \norm{M(s)-M(t)}\\  & = T_M(t)+\norm{M(s)-M(t)},
\end{align*}
as desired. Using the continuity of $\norm{\cdot}$, we obtain, for $s\geq t$,
\[\norm {M(\hat{s})-M(\hat{t})} \leq T_M(\hat{s})-T_M(\hat{t})\]
where $\hat{s}\in \{s^-,s^+,s\}$ and $\hat{t}\in \{t^-,t^+,t\}$. Let $I$ be an interval contained in $[a,b]$ with endpoints $s\geq t$. Using the continuity from below and above of $\mu_M$ and $\mu_{T_M}$, we obtain, after some calculations, that
\[\mu_M(I)=M(\hat{s}(I))-M(\hat{t}(I)) \text{ and } \mu_{T_M}(I)=T_M(\hat{s}(I))-T_M(\hat{t}(I)),\]
where
\[\hat t(I)=\begin{cases}
t^-,& \text{if}\;t\ne a \text{ and }t\in I,\\
t^+,& \text{if}\;t=a \text{ and }t\notin I,\\
t,& \text{otherwise,} \\
\end{cases} \qquad \hat s(I)=\begin{cases}
s,& \text{if}\; s\in I\text{ and }I\ne [a,a],\\
s^-,& \text{if}\;s\notin I \text{ and }I\ne [a,a],\\
s^+& \text{if}\;I=[a,a].\\
\end{cases}\]
Therefore
\begin{equation}
\label{eq:mu_M-leq-mu_TM}
\norm{\mu_M(I)} \leq \mu_{T_M}(I),
\end{equation}
for every interval $I$ contained in $[a,b]$. The set $\mathcal A$ of all finite disjoint unions of intervals of $[a,b]$ is an algebra over $[a,b]$ and, thanks to \eqref{eq:mu_M-leq-mu_TM}, any element $E$ of $\mathcal A$ satisfies $\norm{\mu_M(E)} \leq \mu_{T_M}(E)$. If we set $\mathcal{D}=\{E\in \mathcal{B}_{[a,b]}: \norm{\mu_M(E)}\leq \mu_{T_M}(E)\}$, then $\mathcal{A}\subset \mathcal{D}$. Given $\{A_j\}_{j=1}^{+\infty} \subset \mathcal{D}$ with $A_1\subset A_2\subset \dotsb$, using the continuity from below of $\mu_M$ and $\mu_{T_M}$, we have
\begin{gather*}
    \left\lVert\mu_M\Bigg(\bigcup_{j=1}^{+\infty} A_j\Bigg)\right\rVert=\left\lVert\lim_{j\to +\infty}\mu_M (A_j)\right\rVert=\lim_{j\to +\infty}\left\lVert\mu_M (A_j)\right\rVert\leq \lim_{j\to +\infty}\mu_{T_M} (A_j)=\mu_{T_M}\Bigg(\bigcup_{j=1}^{+\infty} A_j\Bigg).
\end{gather*}
Similarly, given $\{B_j\}_{j=1}^{+\infty}\subset\mathcal D$ with $B_1\supset B_2\supset \dotsb$, we have
\[\left\lVert\mu_M\Bigg(\bigcap_{j=1}^{+\infty} B_j\Bigg)\right\rVert\leq \mu_{T_M}\Bigg(\bigcap_{j=1}^{+\infty} B_j\Bigg).\]
Therefore $\mathcal{D}$ is a monotone class containing $\mathcal{A}$ and, from the monotone class theorem for sets, the smallest monotone class containing $\mathcal{A}$ is precisely the $\sigma$-algebra generated by $\mathcal{A}$, which is exactly $\mathcal{B}_{[a,b]}$. Hence $\mathcal{D}=\mathcal{B}_{[a,b]}$. Finally,
\begin{align*}
    \norm{\mu_M}([a,b]) &=\sup\left\{\sum_{i=1}^k \norm{\mu_M(E_i)}: k\in \N, E_1, \dotsc,E_{k}\in \mathcal{B}_{[a,b]} \text{ are disjoint, } \bigcup_{j=1}^{k}E_j =[a,b]\right\}\\
    & \leq \sup\left\{\sum_{i=1}^k \mu_{T_M}(E_i): k\in \N, E_1, \dotsc,E_{k}\in \mathcal{B}_{[a,b]} \text{ are disjoint, } \bigcup_{j=1}^{k}E_j =[a,b]\right\}\\
    & =\sup\left\{ \mu_{T_M}\left(\bigcup_{i=1}^k E_i\right): k\in \N, E_1, \dotsc,E_{k}\in \mathcal{B}_{[a,b]} \text{ are disjoint, } \bigcup_{j=1}^{k}E_j =[a,b]\right\}\\
    & =\mu_{T_M}([a,b])=T_M(b)=\Var(M)_{\rvert [a,b]},
\end{align*}
yielding the first equality in \eqref{eq:variations-coincide}. 

It is clear that $\norm{\mu_M}([a,b])\leq \norm{\mu_M}_c([a,b])$. Given a family $\{E_i\}_{i \in \N}$ of disjoint sets in $\mathcal B_{[a, b]}$ such that $\bigcup_{i=1}^\infty E_i = [a,b]$, we denote $\widetilde{E_k}=\bigcup_{i=k}^{+\infty} E_i$ for every $k \in \N$. Then, for every $k \in \N$, we have
\[\sum_{i=1}^{k} \norm{\mu_M(E_i)}+\norm{\mu_M(\widetilde E_{k+1})}\leq \norm{\mu_M}([a,b]).\]
From the continuity from below of $\mu_M$, we have $\lim_{k\to +\infty} \norm{\mu_M(\widetilde E_{k+1})} = 0$, so, taking the limit as $k \to +\infty$ in the above expression, we get
\[\sum_{i=1}^{+\infty} \norm{\mu_M(E_i)}\leq \norm{\mu_M}([a,b]).\]
Since the family $\{E_i\}_{i \in \N}$ of disjoint sets in $\mathcal B_{[a, b]}$ such that $\bigcup_{i=1}^\infty E_i = [a,b]$ is arbitrary, we deduce that $\norm{\mu_M}_c([a,b])\leq \norm{\mu_M}([a,b])$, which gives the second equality in \eqref{eq:variations-coincide}. 

Proceeding now as in \cite[Section~4.1]{Cohn2013}, let $\phi\colon[a, b] \to \R^d$ be a simple function. Writing $\phi=\sum_{i=1}^k \phi_i\mathbbm{1}_{E_i}$ for some $E_1, \dotsc, E_k$ in $\mathcal B_{[a, b]}$ and $\phi_1, \dotsc, \phi_k$ in $\mathbb R^d$, we have
\[\left\lvert\int_a^b dM(\theta)\phi(\theta)\right\rvert=\left\lvert \sum_{i=1}^k\mu_M(E_i)\phi_i\right\rvert\leq \sum_{i=1}^k \norm{\mu_M(E_i)}\abs{\phi_i}\leq \norm{\phi}_\infty \Var (M)_{\rvert [a,b]}.\]
Since every bounded function is a limit of simple functions in the uniform norm, we obtain desired result. The second inequality is analogous.
\end{proof}

We now provide the explicit expression of the total variation of a matrix-valued function $M \in \mathcal N$, i.e., without a singular part.

\begin{proposition}\label{prop:tvnosingular}
Suppose that $M\in\mathcal{N}$ is given by 
\[M(\theta)=\sum_{k=1}^{+\infty} A_k \mathbbm{1}_{[-\tau_k,0]}(\theta)+N(\theta),\]
where $A_k\in \mathcal M_d (\R)$, the $\tau_k$'s, $k \in \N$, are two by two distinct and belong to $(0,1)$, and $N$ is absolute continuous. Then $\sum_{k=1}^{+\infty} \norm{A_k}<+\infty$ and
\begin{equation}\label{eq:tvequality}
    \Var (M)_{\rvert[-1,0]} = \sum_{k=1}^{+\infty} \norm{A_k} + \int_{-1}^0 \norm{N'(\theta)} d\theta.
\end{equation}
\end{proposition}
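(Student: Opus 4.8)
The plan is to transfer the problem to the associated matrix-valued measure $\mu_M$ and to exploit the fact that its atomic and absolutely continuous parts are mutually singular, so that their total variations simply add. By Proposition~\ref{proposition:tvcoincides} it suffices to compute $\norm{\mu_M}([-1,0])$ and to show that it equals the right-hand side of \eqref{eq:tvequality}.

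First I would settle the finiteness of $\sum_k \norm{A_k}$. Since $N$ is continuous, the jump of $M$ at each $-\tau_k$ is exactly $A_k$; more precisely, $M((-\tau_k)^+) - M((-\tau_k)^-) = A_k$. Using that for a function of bounded variation the norms of the jumps sum to at most the total variation --- which one sees by choosing, for any finite family of jump points, partition points $u_i < -\tau_{k_i} < v_i$ and letting $M(v_i) \to M((-\tau_{k_i})^+)$ and $M(u_i) \to M((-\tau_{k_i})^-)$ --- we obtain $\sum_k \norm{A_k} \le \Var(M)_{\rvert[-1,0]} < +\infty$. In particular $\mu_D := \sum_{k=1}^{+\infty} A_k \delta_{-\tau_k}$ is a well-defined matrix-valued measure of finite total variation, and $\mu_M = \mu_D + \mu_N$, where $\mu_N$ is absolutely continuous with density $N'$ with respect to Lebesgue measure $\lambda$ (here $N(-1)=0$ because $-\tau_k > -1$ for all $k$). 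Setting $T = \{-\tau_k : k \in \N\}$, the measure $\mu_D$ is carried by the $\lambda$-null set $T$ while $\mu_N(T)=0$, so $\mu_D$ and $\mu_N$ are mutually singular and $\mu_N$ is nonatomic.

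For the upper bound, any finite Borel partition $\{E_i\}$ of $[-1,0]$ satisfies $\norm{\mu_M(E_i)} \le \norm{\mu_D(E_i)} + \norm{\mu_N(E_i)}$; summing and using $\sum_i \norm{\mu_D(E_i)} \le \sum_k \norm{A_k}$ together with $\sum_i \norm{\mu_N(E_i)} \le \sum_i \int_{E_i} \norm{N'}\,d\lambda = \int_{-1}^0 \norm{N'}\,d\lambda$ gives $\norm{\mu_M}([-1,0]) \le \sum_k \norm{A_k} + \int_{-1}^0 \norm{N'}\,d\lambda$. For the lower bound, I would fix $\epsilon > 0$ and $m$ so large that $\sum_{k>m} \norm{A_k} < \epsilon$, and consider the partition of $[-1,0]$ made of the singletons $\{-\tau_1\},\dots,\{-\tau_m\}$ together with a finite Borel partition $\{F_i\}$ of $[-1,0] \setminus \{-\tau_1,\dots,-\tau_m\}$ refined enough that $\sum_i \norm{\mu_N(F_i)} > \int_{-1}^0 \norm{N'}\,d\lambda - \epsilon$. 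Since $\mu_N$ is nonatomic one has $\mu_M(\{-\tau_k\}) = A_k$, while $\sum_i \norm{\mu_D(F_i)} \le \sum_{k>m}\norm{A_k} < \epsilon$, so the corresponding variation sum is at least $\sum_{k\le m}\norm{A_k} + \int_{-1}^0\norm{N'}\,d\lambda - 2\epsilon$. Letting $m \to +\infty$ and then $\epsilon \to 0$ yields the reverse inequality, and \eqref{eq:tvequality} follows.

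The main obstacle is the lower bound: one must recover the full jump mass $\sum_k\norm{A_k}$ and the full absolutely continuous mass $\int_{-1}^0 \norm{N'}\,d\lambda$ within one and the same partition. This succeeds precisely because the atoms lie on a $\lambda$-null set and $\mu_N$ is nonatomic, so isolating finitely many atoms in singletons captures their mass at no cost to the $\mu_N$-variation. The only auxiliary fact I would prove with some care (or cite from the vector-measure literature) is the identity $\norm{\mu_N}(E) = \int_E \norm{N'}\,d\lambda$ for the absolutely continuous matrix-valued measure $\mu_N$; its nontrivial direction, namely that partitions of $E$ can make $\sum_i \norm{\mu_N(F_i)}$ approach $\int_E \norm{N'}\,d\lambda$, follows by approximating $N'$ in $L^1([-1,0],\mathcal M_d(\R))$ by a simple function $\sum_j C_j \mathbbm{1}_{G_j}$ and using that $\norm{\mu_N(G_j)} \approx \norm{C_j}\,\lambda(G_j)$.
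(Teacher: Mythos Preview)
Your argument is correct and takes a genuinely different route from the paper's proof. The paper first establishes the formula for the truncated function $M_m = \sum_{k\le m} A_k \mathbbm{1}_{[-\tau_k,0]} + N$ by ordering the jump points, splitting $[-1,0]$ into the consecutive subintervals $[-\tau_k,-\tau_{k+1}]$, and showing via an explicit $\epsilon/\delta$ argument with interval partitions that $\Var(M_m)_{\rvert[-\tau_k,-\tau_{k+1}]} = \norm{A_{k+1}} + \Var(N)_{\rvert[-\tau_k,-\tau_{k+1}]}$; it then passes to the limit $m\to\infty$ using $\Var(M-M_m)\to 0$. Your approach instead works directly at the measure level with $\mu_M=\mu_D+\mu_N$: by allowing general Borel partitions (via Proposition~\ref{proposition:tvcoincides}) you can isolate finitely many atoms as singletons, which the interval-partition viewpoint cannot do, and then exploit mutual singularity to add the variations without any truncation or limiting procedure on $M$. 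This is conceptually cleaner and extends verbatim to any decomposition into mutually singular pieces. The paper's approach, on the other hand, stays closer to the classical function-variation definition and never needs to invoke the Borel-partition characterization of $\norm{\mu_M}$. Both proofs ultimately rest on the same auxiliary identity $\Var(N)_{\rvert[-1,0]}=\int_{-1}^0\norm{N'}\,d\lambda$ (equivalently $\norm{\mu_N}(E)=\int_E\norm{N'}\,d\lambda$), which the paper simply recalls and which you correctly flag as the one point requiring a short separate argument.
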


\begin{proof}
Let $\widetilde N (\theta)=M(\theta)-N(\theta)$. Then $\Var \widetilde N\leq \Var M + \Var N < +\infty$. The measure $\mu_{\widetilde N}$ associated with $\widetilde N$ in the sense of Remark~\ref{remk:M-mu} is given by
\[\mu_{\widetilde N}= \sum_{k=1}^{+\infty} A_k \delta_{-\tau_k}.\]
If we set $E_k=\{-\tau_k\}$ for $k\ge 2$ and $E_1=[-1,0] \setminus \bigcup_{k=2}^{+\infty} E_k$, then, from Proposition~\ref{proposition:tvcoincides},
\[\sum_{k=1}^{+\infty} \norm{\mu_{\widetilde N} (E_k) } = \sum_{k=1}^{+\infty} \norm{A_k} \leq \Var \widetilde N < +\infty.\]

Let us prove first that, for $m\geq 1$, if $M_m$ is the function defined by
\[
M_m(\theta)=\sum_{k=1}^m A_k\,\mathbbm{1}_{[-\tau_k,0]}(\theta)+N(\theta),\quad \theta\in[-1,0],
\]
then
\begin{equation}\label{eq:tvequalitytruncated}
\Var(M_m)_{\rvert [-1,0]}  = \sum_{k=1}^m \norm{A_k} + \int_{-1}^0 \norm{N'(\theta)} d\theta.
\end{equation}
Indeed, up to reordering $\tau_1, \dotsc, \tau_m$, we assume that $-1< -\tau_1<\dotsb< -\tau_m<0$ and we set by convention $\tau_0 = 1$. We have
\begin{equation}\label{eq:tvsumequalsumtv}
\Var(M_m)_{\rvert [-1,0]} =  \sum_{k=0}^{m-1} \Var (M_m)_{\rvert [-\tau_k,-\tau_{k+1}]}+\Var (M_m)_{\rvert [-\tau_m,0]}.
\end{equation}
We first show that, for $0\leq k\leq m-1$,
\[
\Var (M_m)_{\rvert [-\tau_k,-\tau_{k+1}]}=\norm{A_{k+1}}+\Var (N)_{\rvert [-\tau_k,-\tau_{k+1}]}.
\]
Given $\epsilon>0$, by continuity of $N$ there exists $\delta>0$ such that, for every
$t\in(-\tau_{k+1}-\delta,-\tau_{k+1})$ we have $\norm{N(-\tau_{k+1})-N(t)}<\epsilon/4$.
Take a partition $-\tau_{k}=t_1<\dotsb<t_{n+1}=-\tau_{k+1}$ of $[-\tau_k,-\tau_{k+1}]$ such that
$\sum_{j=1}^n \norm{N(t_{j+1})-N(t_{j})} \geq \Var (N)_{\rvert [-\tau_k,-\tau_{k+1}]} -\epsilon/2$.
Without loss of generality we may assume $t_n\in(-\tau_{k+1}-\delta,-\tau_{k+1})$. Then
\begin{align*}
\sum_{j=1}^n \norm{M_m(t_{j+1})-M_m(t_j)} &= \sum_{j=1}^{n-1}\norm{N(t_{j+1})-N(t_j)}+\norm{A_{k+1}+N(t_{n+1})-N(t_n)} \\
&\geq\sum_{j=1}^{n-1} \norm{N(t_{j+1})-N(t_j)} + \norm{A_{k+1}} -  \norm{N(t_{n+1})-N(t_n)}\\
&=\sum_{j=1}^{n} \norm{N(t_{j+1})-N(t_j)} + \norm{A_{k+1}} -  2\norm{N(-\tau_{k+1})-N(t_n)}\\
&\geq \Var(N)_{\rvert [-\tau_k,-\tau_{k+1}]} -\frac{\epsilon}{2} + \norm{A_{k+1}} -2\frac{\epsilon}{4} \\
&= \Var(N)_{\rvert [-\tau_k,-\tau_{k+1}]}  + \norm{A_{k+1}} -\epsilon.
\end{align*}
Thus
\[\Var(M_m)_{\rvert [-\tau_k,-\tau_{k+1}]}  \geq  \Var (N)_{\rvert [-\tau_k,-\tau_{k+1}]}  + \norm{A_{k+1}} -\epsilon,\]
for every $\epsilon>0$, which implies
\[\Var (M_m)_{\rvert [-\tau_k,-\tau_{k+1}]} \geq  \Var (N)_{\rvert [-\tau_k,-\tau_{k+1}]}  + \norm{A_{k+1}},\quad 0\leq k \leq m-1.\]
The reverse inequality is clear, hence we have
\begin{equation}\label{eq:prop:tvnosingula1}
    \Var (M_m)_{\rvert [-\tau_k,-\tau_{k+1}]} =  \Var (N)_{\rvert [-\tau_k,-\tau_{k+1}]} + \norm{A_{k+1}},\quad 0\leq k \leq m-1.
\end{equation}
On the other hand, since $M_m =\sum_{k=1}^m A_k + N$ on $[-\tau_m, 0]$, we have $\Var(M_m)_{\rvert [-\tau_m, 0]} = \Var(N)_{\rvert [-\tau_m, 0]}$. Summing \eqref{eq:prop:tvnosingula1} over $k$ in $\{0, \dotsc, m-1\}$ and using the previous fact, we obtain the equality \eqref{eq:tvequalitytruncated} after recalling that $\Var (N)_{\rvert [-1,0]} = \int_{-1}^0 \norm{N'(\theta)}d\theta$ since $N$ is absolutely continuous.

To conclude, note that
\[\Var {(M-M_m)}=\Var {\Bigg(\sum_{k=m+1}^{+\infty} A_k\,\mathbbm{1}_{[-\tau_k,0]}\Bigg)} \leq \sum_{k=m+1}^\infty \norm{A_k}\to 0\]
as $m\to +\infty$. Since
$\abs{\Var(M)_{\rvert [-1,0]}-\Var(M_m)_{\rvert [-1,0]}}\leq \Var(M-M_m)_{\rvert [-1,0]}$, we have
\[\Var(M)_{\rvert [-1,0]}=\lim_{m\to +\infty}\Var(M_m)_{\rvert [-1,0]}=\lim_{m\to +\infty} \Big(\sum_{k=1}^m \norm{A_k} + \int_{-1}^0 \norm{N'(\theta)}\,d\theta\Big),\]
yielding \eqref{eq:tvequality}.
\end{proof}

Our next two results concern the uniqueness of weak-$*$ $L^\infty$ limits with respect to different measures. The first result, Lemma~\ref{lemma:measure1} below, is an intermediate step to get Proposition~\ref{prop:measure2}. To state and prove them, we use the standard notation $\mu \ll \nu$ to say that the measure $\mu$ is absolutely continuous with respect to the measure $\nu$, and $\mu \perp \nu$ to denote that the measures $\mu$ and $\nu$ are mutually singular.

\begin{lemma}\label{lemma:measure1}
        Let $\mathcal X = (X, \mathfrak S)$ be a measurable space, $\nu$ and $\mu$ be finite and positive measures on $\mathcal X$, and $(\eta_n)_{n\in\N}$ be a sequence of complex-valued measurable functions defined on $X$ such that
        \[\eta_n\xrightharpoonup{*} f \text{ in } L^\infty(\mathcal X,\C,\nu)\quad\text{ and }\quad\eta_n\xrightharpoonup{*} g \text{ in } L^\infty(\mathcal X,\C,\mu)  \]
        for some measurable functions $f$ and $g$. Suppose in addition that there exist positive measures $\mu_{ac}$ and $\bar\mu$ such that $\mu=\mu_{ac}+\bar\mu$, $\mu_{ac}\ll\nu$, and $\mu_{ac} \perp \bar\mu$. Then $f=g$ $\mu_{ac}$-a.e.
    \end{lemma}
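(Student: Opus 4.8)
The plan is to test both weak-$*$ convergences against suitable functions and to route them through the common measure $\mu_{ac}$. The two hypotheses on $\mu_{ac}$ play complementary roles here: the absolute continuity $\mu_{ac}\ll\nu$ lets me realize integration against $\mu_{ac}$ inside the $L^1(\nu)$-pairing via a Radon--Nikodym density, while the mutual singularity $\mu_{ac}\perp\bar\mu$ lets me isolate $\mu_{ac}$ inside $\mu$ by restricting to a carrier set.

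First I would invoke the Radon--Nikodym theorem to write $d\mu_{ac}=h\,d\nu$ with $h=\frac{d\mu_{ac}}{d\nu}\geq 0$ and $h\in L^1(\nu)$, using that $\mu_{ac}$ is finite. From $\mu_{ac}\perp\bar\mu$ I would fix a set $A\in\mathfrak S$ with $\mu_{ac}(X\setminus A)=0$ and $\bar\mu(A)=0$, so that $\mathbbm 1_A\,d\mu$ and $d\mu_{ac}$ agree as measures. Note also that the weak-$*$ limits satisfy $f\in L^\infty(\nu)$ and $g\in L^\infty(\mu)$, hence both are bounded $\mu_{ac}$-a.e.\ (for $f$ because $\mu_{ac}\ll\nu$, for $g$ because $\mu_{ac}\leq\mu$), so that $f-g\in L^1(\mu_{ac})$ since $\mu_{ac}$ is finite.

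Next, fix an arbitrary bounded measurable $\phi$ and compute $\lim_{n\to+\infty}\int\eta_n\phi\,d\mu_{ac}$ in two ways. On one hand, $\int\eta_n\phi\,d\mu_{ac}=\int\eta_n(\phi h)\,d\nu$; since $\phi$ is bounded and $h\in L^1(\nu)$, the test function $\phi h$ lies in $L^1(\nu)$, so the weak-$*$ convergence in $L^\infty(\nu)$ gives $\int\eta_n\phi\,d\mu_{ac}\to\int f(\phi h)\,d\nu=\int f\phi\,d\mu_{ac}$. On the other hand, because $\bar\mu(A)=0$ and $\mu_{ac}(X\setminus A)=0$, one has $\int\eta_n\phi\,d\mu_{ac}=\int\eta_n(\phi\mathbbm 1_A)\,d\mu$; since $\mu$ is finite, $\phi\mathbbm 1_A\in L^1(\mu)$, so the weak-$*$ convergence in $L^\infty(\mu)$ gives $\int\eta_n\phi\,d\mu_{ac}\to\int g(\phi\mathbbm 1_A)\,d\mu=\int g\phi\,d\mu_{ac}$. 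Equating the two limits yields $\int\phi(f-g)\,d\mu_{ac}=0$ for every bounded measurable $\phi$. Taking $\phi=\mathbbm 1_E$ for arbitrary $E\in\mathfrak S$ gives $\int_E(f-g)\,d\mu_{ac}=0$, and since $f-g\in L^1(\mu_{ac})$ this forces $f=g$ $\mu_{ac}$-a.e.

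The only genuine obstacle is bookkeeping of integrability, so that each weak-$*$ statement is legitimately invoked: the test function must sit in $L^1(\nu)$ on the $\nu$-side and in $L^1(\mu)$ on the $\mu$-side. This is precisely why one restricts to bounded $\phi$, and why the finiteness of the measures together with $h\in L^1(\nu)$ are needed. Once the two pairings are correctly set up, the identity bridging them through $\mu_{ac}$ closes immediately.
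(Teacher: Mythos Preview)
Your proof is correct and follows essentially the same approach as the paper's: both use the Radon--Nikodym density $h=\frac{d\mu_{ac}}{d\nu}$ to route the $\nu$-pairing through $\mu_{ac}$, and the carrier set coming from $\mu_{ac}\perp\bar\mu$ to route the $\mu$-pairing through $\mu_{ac}$, then test against indicators to conclude $f=g$ $\mu_{ac}$-a.e. Your presentation is somewhat more streamlined (computing one limit in two ways, and invoking the standard fact that $\int_E(f-g)\,d\mu_{ac}=0$ for all $E$ forces $f=g$ $\mu_{ac}$-a.e., where the paper spells this out via real/imaginary and positive/negative parts), but the underlying argument is the same.
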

\begin{proof}
        By hypothesis, we have
        \[\int_X\eta_n\varphi d\mu \longrightarrow \int_X g\varphi d\mu,\quad \forall \varphi\in L^1(\mathcal X,\C,\mu),\]
        or, equivalently,
        \begin{equation}
        \label{eq:convergence}
        \int_X \eta_n\varphi d\mu_{ac}+\int_X \eta_n\varphi d\bar\mu  \longrightarrow \int_X g\varphi d\mu_{ac}+\int_X g\varphi d\bar\mu,
        \end{equation}
        for every $\varphi\in L^1(\mathcal X,\C,\mu)$. Since $\mu_{ac} \perp \bar\mu$, there exist $A_1$ and $A_2$ in $\mathfrak S$ such that $A_1 \cap A_2 = \emptyset$, $A_1 \cup A_2 = X$, and $\mu_{ac}(A_1) = \bar\mu(A_2) = 0$.
        For any $\varphi\in L^1(\mathcal X,\C,\mu_{ac})$ it is clear that $\widetilde\varphi := \varphi\mathbbm{1}_{A_2}$ belongs to $L^1(\mathcal X,\C,\mu)$. So, from \eqref{eq:convergence}, we obtain, for every $\varphi\in L^1(\mathcal X,\C,\mu_{ac})$,
        \[\int_X \eta_n\varphi d\mu_{ac}=\int_X \eta_n\widetilde\varphi d\mu_{ac}+ \underbrace{\int_X \eta_n\widetilde\varphi d\bar\mu}_{=0}   \longrightarrow \int_X g\widetilde\varphi d\mu_{ac}+\underbrace{\int_X g\widetilde\varphi d\bar\mu}_{=0}=\int_X g\varphi d\mu_{ac}.\]
        Thus one has $\eta_n\xrightharpoonup{*}g$ in $L^\infty(\mathcal X,\C,\mu_{ac})$. 

        On the other hand, since $\mu_{ac}\ll \nu $ there exists $\psi=\frac{d\mu_{ac}}{d\nu}\in L^1(\mathcal X,\C,\nu)$. From the convergence
        \[ 
         \int_X \eta_n\varphi d\mu_{ac}\longrightarrow \int_X g\varphi d\mu_{ac},\quad\forall \varphi\in L^1(\mathcal X,\C,\mu_{ac})
        \]
        we have in particular
        \[\int_X \eta_n\varphi \psi d\nu\longrightarrow \int_X g\varphi\psi d\nu,\quad\forall \varphi\in \{\mathbbm{1}_A: A\in \mathfrak S\}.\]
        Since $\varphi\psi \in L^1(\mathcal X,\C,\nu)$ for every $\varphi\in \{\mathbbm{1}_A: A\in \mathfrak S \}$ and $\eta_n\xrightharpoonup{*} f \text{ in } L^\infty(\mathcal X,\C,\nu)$, we obtain
        \[\int_X \eta_n\varphi \psi d\nu \longrightarrow \int_X f\varphi\psi d\nu,\]
        so, from the uniqueness of the limit,
        \[\int_X g\varphi\psi d\nu=\int_X f\varphi\psi d\nu, \quad\forall \varphi\in \{\mathbbm{1}_A: A\in \mathfrak S \}. \]
        Therefore,
        \begin{equation}\label{eq:lemma:measure1}
            \int_X  (f-g) \varphi \psi d\nu=\int_X  (f - g) \varphi d\mu_{ac} = 0, \quad \forall \varphi\in \{\mathbbm{1}_A: A\in \mathfrak S \}.
        \end{equation}
        We claim that $f=g$ $\mu_{ac}$-a.e. Indeed, we set $D=\{\xi \in X:f(\xi)\ne g(\xi)\}$ and suppose by contradiction that $\mu_{ac}(D)>0$. Then we have $\mu_{ac}(D_{\Re})>0$ or $\mu_{ac}(D_{\Im})>0$, where $D_{\Re}=\{\xi \in X :\Re f(\xi)\ne \Re g(\xi)\}$ and $D_{\Im}=\{\xi \in X :\Im f(\xi)\ne \Im g(\xi)\}$. Without loss of generality, suppose that $\mu_{ac}(D_{\Re})>0$, so $\mu_{ac}(D_{\Re}^+)>0$ or $\mu_{ac}(D_{\Re}^-)>0$, where $D_{\Re}^+=\{\xi \in X :\Re f(\xi)>\Re g(\xi)\}$ and $D_{\Re}^-=\{\xi \in X :\Re f(\xi)<\Re g(\xi)\}$. If we assume $\mu_{ac}(D_{\Re}^+)>0$, then
        \[\int_X (\Re f - \Re g)\mathbbm{1}_{D_{\Re}^+} d\mu_{ac}>0\]
        which contradicts \eqref{eq:lemma:measure1}. The same contradiction happens if we assume $\mu_{ac}(D_{\Re}^-) > 0$, and therefore, the assertion holds.
    \end{proof}
    
We now generalize Lemma~\ref{lemma:measure1} to the case of an arbitrary number of measures.
    
\begin{proposition}\label{prop:measure2}
Let $\mathcal X = (X, \mathfrak S)$ be a measurable space, $\mu_1,\dotsc,\mu_K$  be finite and positive measures on $\mathcal X$, and $(\eta_n)_{n\in\N}$ be a sequence of complex-valued measurable functions defined on $X$. Suppose that, for every $k\in\{1,\dotsc,K\}$, there exists a measurable function $\bar{\eta}_k$ such that
\[\eta_n\xrightharpoonup{*} \bar{\eta}_k\text{ in } L^\infty(\mathcal X,\C,\mu_k).\]
Then there exists a measurable function $\bar{\eta}$ such that $\bar{\eta}=\bar{\eta}_k$ $\mu_k$-a.e., for every $k\in\{1, \dotsc,K\}$.
\end{proposition}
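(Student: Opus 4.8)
The plan is to reduce the general statement to the two-measure Lemma~\ref{lemma:measure1} by introducing a single dominating measure. First I would set $\nu = \sum_{k=1}^K \mu_k$, which is a finite positive measure on $\mathcal X$ satisfying $\mu_k \ll \nu$ for every $k\in\{1,\dotsc,K\}$: a $\nu$-null set is $\mu_j$-null for all $j$, and in particular $\mu_k$-null. This $\nu$ will play the role of the reference measure in Lemma~\ref{lemma:measure1}, against each $\mu_k$ in turn.

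The next step is to produce a weak-$*$ limit of $(\eta_n)$ with respect to $\nu$. By the Banach--Steinhaus theorem, each hypothesis $\eta_n\xrightharpoonup{*}\bar\eta_k$ in $L^\infty(\mathcal X,\C,\mu_k)$ forces $\sup_n \norm{\eta_n}_{L^\infty(\mathcal X,\C,\mu_k)} < +\infty$. Since a set is $\nu$-null if and only if it is $\mu_k$-null for every $k$, one has $\norm{\eta_n}_{L^\infty(\mathcal X,\C,\nu)} = \max_{1\le k\le K}\norm{\eta_n}_{L^\infty(\mathcal X,\C,\mu_k)}$, so $(\eta_n)$ is bounded in $L^\infty(\mathcal X,\C,\nu) = (L^1(\mathcal X,\C,\nu))^*$. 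As $\nu$ is finite, $L^1(\mathcal X,\C,\nu)$ is separable, so by the Banach--Alaoglu--Bourbaki theorem the bounded sequence $(\eta_n)$ admits a subsequence $(\eta_{n_j})_{j\in\N}$ and a function $\bar\eta\in L^\infty(\mathcal X,\C,\nu)$ with $\eta_{n_j}\xrightharpoonup{*}\bar\eta$ in $L^\infty(\mathcal X,\C,\nu)$.

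Finally, I would identify this limit against each $\mu_k$. Passing to a subsequence does not affect the given convergences, so $\eta_{n_j}\xrightharpoonup{*}\bar\eta_k$ in $L^\infty(\mathcal X,\C,\mu_k)$ for every $k$ as well. Fixing $k$ and applying Lemma~\ref{lemma:measure1} to the subsequence $(\eta_{n_j})$ with reference measure $\nu$, target measure $\mu = \mu_k$, and the trivial decomposition $\mu_k = \mu_k + 0$ (admissible because $\mu_k \ll \nu$ and $\mu_k \perp 0$), I obtain $\bar\eta = \bar\eta_k$ $\mu_k$-a.e. Since $k$ is arbitrary, a measurable representative of $\bar\eta$ is the desired function.

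The work here is conceptual rather than computational: the main point to get right is the extraction of a weak-$*$ convergent subsequence with respect to the single measure $\nu$, namely verifying the uniform $L^\infty(\nu)$-bound and invoking separability of $L^1(\nu)$ to apply Banach--Alaoglu--Bourbaki. Once that subsequence is in hand, Lemma~\ref{lemma:measure1} carries out the rest, the only subtlety being to recognize that the absolute continuity $\mu_k \ll \nu$ renders the Lebesgue-type decomposition of $\mu_k$ required by the lemma entirely trivial.
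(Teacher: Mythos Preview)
Your approach is genuinely different from the paper's. The paper proceeds by induction on $K$: given a common representative $\tilde\eta$ for $\mu_1,\dots,\mu_N$, it iteratively Lebesgue-decomposes $\mu_{N+1}$ against $\mu_N,\mu_{N-1},\dots,\mu_1$, applies Lemma~\ref{lemma:measure1} to each absolutely continuous piece, and then glues a new representative together by hand on explicitly constructed supporting sets. Your dominating-measure idea with $\nu=\sum_{k}\mu_k$ is much cleaner: once a weak-$*$ limit $\bar\eta$ in $L^\infty(\nu)$ is in hand, a single invocation of Lemma~\ref{lemma:measure1} per index $k$ (with the trivial decomposition $\mu_k=\mu_k+0$, admissible because $\mu_k\ll\nu$ and any measure is singular to the zero measure) finishes the job.

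There is, however, a gap in your extraction step. Finiteness of $\nu$ does \emph{not} imply that $L^1(\mathcal X,\C,\nu)$ is separable for an arbitrary measurable space $(X,\mathfrak S)$ --- consider an uncountable product of fair coins --- so Banach--Alaoglu--Bourbaki need not produce a weak-$*$ convergent \emph{subsequence}. Fortunately you do not need one. Since $\mu_k\le\nu$, every $\varphi\in L^1(\nu)$ lies in each $L^1(\mu_k)$, and therefore
\[
\int_X \eta_n\,\varphi\,d\nu \;=\; \sum_{k=1}^K\int_X\eta_n\,\varphi\,d\mu_k \;\longrightarrow\; \sum_{k=1}^K\int_X\bar\eta_k\,\varphi\,d\mu_k.
\]
This limit defines a bounded linear functional on $L^1(\nu)$ (the bound coming from your uniform $L^\infty(\nu)$ estimate via Banach--Steinhaus), and since $\nu$ is finite one has $(L^1(\nu))^*=L^\infty(\nu)$; hence the \emph{full} sequence $(\eta_n)$ already converges weak-$*$ in $L^\infty(\nu)$ to some $\bar\eta$. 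With this repair your argument goes through in the stated generality and is more transparent than the paper's inductive construction.
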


\begin{proof}
       Let us prove this by induction in $K$. For $K=1$, we just take $\bar{\eta}=\bar{\eta}_1$. Suppose that the result holds for the family $\mu_1, \dotsc, \mu_N$ for some $N \in \{1, \dotsc, K - 1\}$. Thus, by the induction hypothesis, there exists a measurable function $\widetilde{\eta}$ such that 
       \[\eta_n\xrightharpoonup{*} \widetilde{\eta}\text{ in } L^\infty(\mathcal X,\C,\mu_k), \quad \forall k\in\{1, \dotsc,N\} \]
       and 
       \[\eta_n\xrightharpoonup{*} \bar{\eta}_{N+1}\text{ in } L^\infty(\mathcal X,\C,\mu_{N+1}).\]
    Now, we decompose $\mu_{N+1}$ using the Radon--Nikodym--Lebesgue decomposition as follows:
    \begin{enumerate}
        \item We first write $\mu_{N+1} = \mu_{ac}^N + \mu_s^N$, where $\mu_{ac}^N\ll \mu_N$ and $\mu_{s}^N \perp \mu_N$;
        \item Recursively, for $j=N, \dotsc,2$, we decompose $\mu_s^j=\mu_{ac}^{j-1}+\mu_s^{j-1}$, where $\mu_{ac}^{j-1}\ll \mu_{j-1}$ and $\mu_s^{j-1}\perp \mu_{j-1}$.
    \end{enumerate}
    Thus, we have
    \[\mu_{N+1}=\mu_{ac}^N+\mu_{ac}^{N-1}+ \dotsb +\mu_{ac}^1+\mu_s^1.\]
    Now, for $k=1, \dotsc,N$ we have $\mu_s^k\perp\mu_k$, so we take $\widetilde{A}_k, \widetilde{B}_k\subset X$ such that $\widetilde{A}_k\cap\widetilde{B}_k=\emptyset$, $\widetilde{A}_k\cup \widetilde{B}_k=X$, and $\mu_{s}^k(\widetilde{A}_k)=0$, $\mu_k(\widetilde{B}_k)=0$. We define the sets $A_j,B_j$ for $j \in \{1, \dotsc,N\}$ by $A_j:=\widetilde{A}_j\cup\dotsb \cup \widetilde{A}_N $ and $B_j:=\widetilde{B}_j\cap\dotsb \cap \widetilde{B}_N$, and by construction we have $A_j \cap B_j = \emptyset$ and $A_j \cup B_j = X$.
    Consider $\bar{\eta}$ given by
    \[\bar{\eta}(\xi)=\begin{cases}
\widetilde{\eta}(\xi),& \text{if } \xi\in A_1, \\ 
\bar{\eta}_{N+1}(\xi),& \text{if } \xi\in B_1.
\end{cases}\]
To conclude the proof, we will show that $\bar{\eta}=\bar{\eta}_k$ $\mu_k$-a.e.\ for every $k\in\{1, \dotsc,N+1\}$. For every $k \in \{1, \dotsc, N\}$, note that the set $B=\{\xi \in X: \bar{\eta}(\xi)\ne \widetilde{\eta}(\xi)\}$ is a subset of $B_1$, thus $\mu_k(B)\leq \mu_k(B_1)\leq \mu_k(\widetilde{B}_k)=0$. Consequently, $\bar{\eta} = \widetilde{\eta} = \bar\eta_k$ $\mu_k$-a.e.\ for every $k \in \{1, \dotsc, N\}$. We are thus left to show that $\bar\eta = \bar\eta_{N + 1}$ $\mu_{N+1}$-a.e.

Since $\mu_s^k=\mu_{ac}^{k-1}+ \dotsb + \mu_{ac}^1+\mu_s^1$ for $k \in \{1, \dotsc, N\}$ and all those measures are positive, then $\mu_s^k(\widetilde{A}_k)=0$ implies $\mu_s^1(\widetilde{A}_k)=0$ for every $k\in\{1, \dotsc,N\}$. So $\mu_s^1(A_1)=0$. 

For every $k\in\{1, \dotsc,N\}$, we write $\mu_{N+1}=\bar \mu^k+\mu_{ac}^k$, where $\bar \mu^k=\mu_{ac}^N+\dotsb+\mu_{ac}^{k+1}+\mu_{s}^k$, and we set $A^k = \widetilde B_k \cup \widetilde A_{k+1}\cup \dotsb \cup \widetilde A_N$ and $B^k = X \setminus A^k = \widetilde A_k \cap \widetilde B_{k+1} \cap \dotsb \cap \widetilde B_N$. Since $\mu_s^j = \mu_{ac}^{j-1}+ \dotsb+\mu_{ac}^1+\mu_s^1$ for $j \in \{1, \dotsc, N\}$ and all those measures are positive, we deduce that, if $j \in \{k + 1, \dotsc, N\}$, then $\mu_{ac}^k(\widetilde A_j) \leq \mu_s^j(\widetilde A_j) = 0$. In addition, since $\mu_{ac}^k \ll \mu_k$ and $\mu_k(\widetilde B_k) = 0$, we have $\mu_{ac}^k(\widetilde B_k) = 0$. Those two facts show that $\mu_{ac}^k(A^k) = 0$. In addition, for $j \in \{k + 1, \dotsc, N\}$, we have from the definition of $B^k$ that $\mu_{ac}^j(B^k) \leq \mu_{ac}^j(\widetilde B_j) = 0$ and $\mu_{s}^k(B^k) \leq \mu_s^k(\widetilde A_k) = 0$, showing that $\bar \mu^k (B^k) = 0$. Hence $\mu_{ac}^k \perp \bar\mu^k$ and, applying Lemma~\ref{lemma:measure1} with $f = \widetilde\eta$, $g = \bar\eta_{N + 1}$, $\nu = \mu_k$, and $\mu = \mu_{N+1} = \mu_{ac}^k + \bar\mu^k$, we have that $\widetilde{\eta}=\bar{\eta}_{N+1}$ $\mu_{ac}^k$-a.e.\ for every $k \in \{1, \dotsc, N\}$. Thus, for $A=\{\xi \in X: \widetilde{\eta}(\xi)\ne \bar{\eta}_{N+1}(\xi)\}$, one has $\mu_{ac}^k(A)=0$, for every $k=1,\dotsc,N$. Since
\[A^* := \{\xi\in X: \bar{\eta}(\xi)\ne \bar{\eta}_{N+1}(\xi)\}=A_1\cap A,\]
we obtain
\[\mu_{N+1}(A^*)\leq \mu_{ac}^N(A)+ \dotsb+\mu_{ac}^1(A)+\mu_s^1(A_1)=0.\]
Therefore $\bar\eta=\bar\eta_{N+1}$ $\mu_{N+1}$-a.e., as required.
\end{proof}   
   
\subsection{Other technical results}
\label{app:other}

We next provide an elementary argument for the exponential bound \eqref{eq:exp-estimate} for solutions of \eqref{sys:general}. Recall that such a bound can also be obtained from standard functional-analytic tools, as detailed in \cite[Section~12.3]{Hale1977Theory}.


\begin{proof}[Proof of \eqref{eq:exp-estimate}]
    Notice that we may assume $\Var(M)_{\rvert [s,0]} \to 0$ as $s \to 0^-$; otherwise, define
    \[
        \widetilde{M}(\theta) = 
        \begin{cases}
            (\I - A)^{-1} M(\theta), & \text{if } -1 \leq \theta < 0,\\
            (\I - A)^{-1} M(0^-), & \text{if } \theta = 0,
        \end{cases}
    \]
    with $A = M(0) - M(0^-)$. Then $\Sigma_{\widetilde{M}}$ is equivalent to $\Sigma_M$ (in the sense that their trajectories coincide) and $\Var(\widetilde{M})_{\rvert [s,0]} \to 0$ as $s \to 0^-$.

   Let $x$ be the solution of \eqref{sys:general} with initial condition $\phi$. For $\eta>0$, define $y_\eta \colon [-1, +\infty) \to \mathbb R^d$ by $y_\eta(t) = e^{-\eta t} x(t)$. Then $y_\eta$ satisfies
    \[
        y_\eta(t) = \int_{-1}^0 dM_\eta(\theta)\, y_\eta(t+\theta),\quad t \geq 0,
    \]
    where
    \[
        M_\eta(\theta) = \int_{-1}^{\theta} e^{\eta \tau}\, dM(\tau), \quad \theta\in [-1,0].
    \]
    From the definition of $M_\eta$ and Proposition~\ref{proposition:tvcoincides}, it follows that, for every $s \in (-1,0)$,
    \[
        \Var(M_\eta)_{\rvert[-1,0]} 
        = \Var(M_\eta)_{\rvert[-1,s]} + \Var(M_\eta)_{\rvert[s,0]} 
        \le e^{\eta s}\Var(M)_{\rvert[-1,s]} + \Var(M)_{\rvert[s,0]}.
    \]
    Since $\Var(M)_{\rvert[s,0]} \to 0$ as $s \to 0^-$, fix $\bar{s} \in (-1, 0)$ so that $\Var(M)_{\rvert[\bar{s},0]}<1/2$. Then, for 
    $\eta$ sufficiently large,
    $e^{\eta \bar{s}}\Var(M)_{\rvert[-1,\bar{s}]}<1/2$ and hence $\Var(M_\eta)_{\rvert [-1,0]} < 1$. Applying Proposition~\ref{proposition:tvcoincides} once more, we have, for all $t \geq 0$,
    \[
        \abs{y_\eta(t)} 
        = \left\lVert \int_{-1}^0 dM_\eta(\theta)\, y_\eta(t+\theta) \right\rVert
        \le \Var(M_\eta)_{\rvert[-1,0]} \lVert (y_\eta)_t \rVert_\infty.
    \]
    By \cite[Lemma~1]{Mazenc2015Trajectory}, one deduces that there exist positive constants $\widetilde{K}$ and $\beta$ such that
    \begin{equation*}
        \lVert (y_\eta)_t \rVert_\infty 
        \le \widetilde{K} e^{-\beta t} \lVert (y_\eta)_0 \rVert_\infty,
        \quad t \ge 0.
    \end{equation*}
    Since $x(t) = e^{\eta t} y_\eta(t)$, one gets the conclusion.
\end{proof}

Our next result proves that $\mathcal N\subset \mathcal L$, where the classes $\mathcal N$
and $\mathcal L$ have been defined in \eqref{eq:def-N-L-G}.

\begin{proposition}\label{prop:positiveinf}
    Suppose that $M\in\mathcal{N}$ is given by \eqref{eq:NS}, then $S_M < +\infty$ and
    \[\inf_{\Re s \ge \alpha} \abs{\Delta_M(s)} > 0\]
    for every $\alpha>S_M$.
\end{proposition}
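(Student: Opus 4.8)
The plan is to study the entire function $\Delta_M(s) = \det(\I - H(s))$ through the decomposition
\[
H(s) := \int_{-1}^0 e^{s\theta}\,dM(\theta) = \sum_{k=1}^{+\infty} A_k e^{-s\tau_k} + \Phi(s), \qquad \Phi(s) := \int_{-1}^0 e^{s\theta} N'(\theta)\,d\theta.
\]
Here I use that $\sum_k \norm{A_k} < +\infty$ by Proposition~\ref{prop:tvnosingular}, so the series converges in $\mathcal M_d(\C)$ uniformly on any half-plane $\{\Re s \ge \alpha\}$, making $H$ (hence $\Delta_M$) holomorphic there. The finiteness of $S_M$ is the easy part: from $\norm{H(s)} \le \sum_k \norm{A_k} e^{-\Re s\,\tau_k} + \int_{-1}^0 e^{\Re s\,\theta}\norm{N'(\theta)}\,d\theta$, both terms depend only on $\Re s$ and tend to $0$ as $\Re s \to +\infty$ by dominated convergence (for the series and the integral, respectively). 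Hence there is $R_0$ with $\norm{H(s)} < 1$ for $\Re s \ge R_0$, so $\I - H(s)$ is invertible and $\Delta_M(s) \ne 0$ there, giving $S_M \le R_0 < +\infty$. Enlarging $R_0$ to some $R$ with $\norm{H(s)} < 1/2$ for $\Re s \ge R$ yields $\abs{\Delta_M(s)} > 2^{-d}$ on that region (the eigenvalues of $\I - H(s)$ have modulus $> 1/2$), which already settles the second claim for large real parts.

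For the second assertion, fix $\alpha > S_M$ and a value $\alpha' \in (S_M, \alpha)$; by the above it suffices to bound $\abs{\Delta_M}$ from below on the strip $\alpha \le \Re s \le R$. I argue by contradiction: suppose there are $s_n = a_n + i b_n$ with $a_n \in [\alpha, R]$ and $\Delta_M(s_n) \to 0$, and pass to a subsequence with $a_n \to a^* \in [\alpha, R]$. If $(b_n)$ were bounded, a further subsequence would converge to a genuine zero of $\Delta_M$ with real part $\ge \alpha > S_M$, contradicting the definition of $S_M$; hence $\abs{b_n} \to +\infty$. The absolutely continuous part then disappears: since $\norm{\Phi(a_n + ib_n) - \Phi(a^* + ib_n)} \to 0$ (dominated convergence as $a_n \to a^*$) and $\Phi(a^* + ib_n) \to 0$ by the Riemann--Lebesgue lemma applied to $\theta \mapsto e^{a^*\theta}N'(\theta) \in L^1$, we obtain $\Phi(s_n) \to 0$. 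By uniform continuity of $\det$ on bounded sets, this forces $\det(\I - D(s_n)) \to 0$, where $D(s) := \sum_k A_k e^{-s\tau_k}$.

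The heart of the argument is now the almost-periodic nature of $D$ in the imaginary direction. Using compactness of the infinite torus $\prod_{k}\{z \in \C : \abs z = 1\}$ (Tychonoff), I extract a further subsequence along which $e^{-i b_{n_j}\tau_k} \to \zeta_k$ for every $k$, and set $D_\infty(s) := \sum_k \zeta_k A_k e^{-s\tau_k}$. A dominated-convergence estimate, whose tails are controlled uniformly by $\sum_k \norm{A_k} e^{-\alpha'\tau_k}$, shows $D(\cdot + i b_{n_j}) \to D_\infty$ uniformly on $\{\Re s \ge \alpha'\}$; together with $a_{n_j} \to a^*$ this yields $D(s_{n_j}) \to D_\infty(a^*)$ and hence $\det(\I - D_\infty(a^*)) = 0$. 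Combining this with the uniform vanishing of $\Phi(\cdot + i b_{n_j})$ on compact subsets (Riemann--Lebesgue, uniform over the compact $L^1$-family $\{\theta \mapsto e^{s\theta}N'(\theta) : s \in K\}$), I get $\Delta_M(\cdot + i b_{n_j}) \to \widetilde\Delta := \det(\I - D_\infty)$ uniformly on compact subsets of $\{\Re s > \alpha'\}$, all functions being holomorphic. Since $D_\infty(s) \to 0$ as $\Re s \to +\infty$, we have $\widetilde\Delta \to 1$ there, so $\widetilde\Delta \not\equiv 0$; as $\widetilde\Delta(a^*) = 0$ with $a^* > S_M$, Hurwitz's theorem forces $\Delta_M(\cdot + i b_{n_j})$ to have a zero near $a^*$ for large $j$, i.e.\ $\Delta_M$ vanishes at some $s$ with $\Re s > S_M$ --- the desired contradiction.

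The routine ingredients are the two dominated-convergence limits and the finiteness of $S_M$. I expect the \emph{main obstacle} to be the regime $\abs{\Im s} \to +\infty$ inside the strip, where $\norm{H(s)}$ need not be small because the discrete part $D$ does not decay; the resolution is precisely the translation-compactness (Bohr almost periodicity) of $D$ together with Hurwitz's theorem, which promotes an ``approximate zero at infinity'' into a genuine zero with real part exceeding $S_M$. The points requiring care are the measure-theoretic uniformities (uniform Riemann--Lebesgue on compact $L^1$-families, uniform tail bounds for the series over $\{\Re s \ge \alpha'\}$) and verifying that the extracted limit $\widetilde\Delta$ is nontrivial, which is exactly what makes Hurwitz applicable.
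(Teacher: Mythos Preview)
Your proof is correct and follows essentially the same approach as the paper: contradiction via a sequence with $\abs{\Im s_n} \to +\infty$, Riemann--Lebesgue to eliminate the absolutely continuous part, extraction of the limits $e^{-ib_n\tau_k} \to \zeta_k$ via compactness (the paper uses a diagonal procedure, you use Tychonoff), and Hurwitz's theorem to produce a genuine zero with real part exceeding $S_M$. Your write-up is slightly more careful in a couple of places --- you explicitly verify that the limit $\widetilde\Delta$ is nontrivial before invoking Hurwitz, and you split off the region $\Re s \ge R$ first --- but the overall structure and all key ideas coincide with the paper's argument.
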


\begin{proof}
Since $\mathcal N \subset \mathcal W$, we have $S_M \leq \omega_M < +\infty$. Suppose by contradiction that there exists $\alpha > S_M$ such that $\inf_{\Re s \ge \alpha} \abs{\Delta_M(s)}=0$. Then there exists a sequence $(s_n)_{n\in \N}$ such that $\Delta_M(s_n)\to 0$ as $n\to +\infty$ with $\Re s_n\ge \alpha$. Since $\Re s_n \ge \alpha > S_M$, it follows that $\abs{s_n} \to +\infty$, otherwise we would obtain a subsequence converging to a zero of $\Delta_M$ located in the half-plane $\{z \in \C : \Re z \geq \alpha\}$. If $(\Re s_n)_{n \in \mathbb N}$ were unbounded, then, up to a subsequence, we would have $\Re s_n\to +\infty$, which would imply $\Delta_M(s_n)\to \det(\I)=1$. Thus $(\Re s_n)_{n \in \mathbb N}$ is bounded and converges, up to a subsequence, to some $\beta\ge \alpha$. Since $\abs{s_n} \to +\infty$, we have $\abs{\Im s_n} \to +\infty$ and, up to replacing by the complex conjugate, we may assume $\Im s_n\to +\infty$. Writing $s_n=x_n+iy_n$, we obtain $x_n\to \beta$ and $y_n\to +\infty$. Define
\[
\Delta_n(s)=\Delta_M(s+iy_n)=\det \left(\I - \sum_{k=1}^{+\infty} A_k e^{-s\tau_k}e^{-iy_n\tau_k}-\int_{-1}^0 N'(\theta) e^{s\theta}e^{iy_n\theta}\,d\theta\right).
\]
Up to extracting subsequences through a diagonal procedure, we get that, for every $k\geq 1$, the sequence $(e^{-iy_n\tau_k})_{n \in \N}$ converges to $e^{i\theta_k}$ for some angle $\theta_k \in [0, 2\pi)$. By the Riemann--Lebesgue lemma, we have
\[
\int_{-1}^0 N^\prime(\theta)e^{s\theta}e^{iy_n\theta}\,d\theta \xrightarrow[n \to +\infty]{} 0,
\]
uniformly with respect $s$ belonging to any compact subset of the complex plane. As a consequence,
\[
\Delta_n(s) \xrightarrow[n \to +\infty]{} \Delta_{\ast}(s)=\det\left(\I - \sum_{k=1}^{+\infty} A_k e^{-s\tau_k}e^{i\theta_k}\right),
\]
uniformly with respect to $s$ belonging to a compact neighborhood of $\beta$. Observe that
\[\abs{\Delta_M(x_n+iy_n)-\Delta_\ast(\beta)}\leq \abs{\Delta_M(x_n+iy_n)-\Delta_M(\beta+iy_n)}+\abs{\Delta_n(\beta)-\Delta_\ast(\beta)}.\]
Since $\det(\cdot)$ is uniformly continuous on compact sets, we have that $\lim_{n\to +\infty} \abs{\Delta_M(x_n+iy_n)-\Delta_M(\beta+iy_n)} = 0$, so that $\Delta_\ast (\beta)=\lim_{n\to +\infty} \Delta_M(s_n)=0$. By~\cite[Hurwitz's Theorem, p.~231]{Gamelin2001}, for $\delta=\alpha-S_M$ there exists, for $n$ sufficiently large, some $s' \in \C$ such that $\abs{s'-\beta} < \delta$ and $\Delta_n(s')=0$. Therefore $s^*=s'+iy_n$ is a root of $\Delta_M$ satisfying $\Re s^*= \Re s' > \beta - \delta \ge \alpha - \delta = S_M$, yielding the desired contradiction. 
\end{proof}

The last technical result we present is a generalization of the maximum modulus principle to matrix-valued functions of several variables, replacing the modulus both by a norm and by the spectral radius.

\begin{lemma}\label{lemma:max_principle}
    Let $C_1, \dotsc ,C_N\subset\C$ be open sets and $f\colon C_1\times\dotsb\times C_N \to \mathcal{M}_d(\C)$ be analytic. For every $j\in \{1, \dotsc,N\}$, suppose that $D_j$ is an open, bounded and connected set such that $\cl D_j \subset C_j$, then
    \begin{equation}\label{eq:lemma:max_modulus_principle}
        \max_{z\in \cl D_1\times \dotsb\times \cl D_N} \norm{f(z)} = \max_{z\in \partial D_1\times \dotsb\times \partial D_N} \norm{f(z)},
    \end{equation}
    where $\partial D_j=\cl D_j \setminus D_j$. Moreover, 
    \begin{equation}\label{eq:lemma:max_modulus_principle1}
        \max_{z\in \cl D_1\times \dotsb\times \cl D_N}\rho(f(z)) = \max_{z\in \partial D_1\times \dotsb\times \partial D_N}  \rho(f(z)).
    \end{equation}
     
\end{lemma}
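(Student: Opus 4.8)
The plan is to bootstrap from scalars to matrices: first prove a several-variable maximum modulus principle for scalar analytic functions on the polydomain $\cl D_1 \times \dotsb \times \cl D_N$ with the distinguished boundary $\partial D_1 \times \dotsb \times \partial D_N$, then deduce \eqref{eq:lemma:max_modulus_principle} for the norm by duality, and finally deduce \eqref{eq:lemma:max_modulus_principle1} for the spectral radius via Gelfand's formula.

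First I would show that, for any scalar function $g$ analytic on a neighborhood of $\cl D_1 \times \dotsb \times \cl D_N$, one has $\max_{\cl D_1 \times \dotsb \times \cl D_N} \abs{g} = \max_{\partial D_1 \times \dotsb \times \partial D_N} \abs{g}$. This follows by a finite induction that freezes all variables but one: for fixed $(z_2, \dotsc, z_N) \in \cl D_2 \times \dotsb \times \cl D_N$, the map $z_1 \mapsto g(z_1, z_2, \dotsc, z_N)$ is analytic on a neighborhood of $\cl D_1$ (since $\cl D_j \subset C_j$ and $C_1 \times \dotsb \times C_N$ is open), so by the one-variable maximum modulus principle on the bounded connected set $D_1$ its maximum over $\cl D_1$ is attained on $\partial D_1$; iterating on $z_2, \dotsc, z_N$ moves the maximum onto the distinguished boundary.

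Next, for the norm, I would use the finite-dimensional duality $\norm{A} = \sup\{\abs{\ell(Ax)} : \abs{x} \le 1,\ \norm{\ell}_\ast \le 1\}$, where $\ell$ ranges over the $\C$-linear functionals on $\C^d$ and $\norm{\cdot}_\ast$ is the dual norm of $\abs{\cdot}$. For each fixed $x$ and $\ell$, the function $z \mapsto \ell(f(z)x)$ is scalar analytic, so the previous step gives $\max_{\cl D_1 \times \dotsb} \abs{\ell(f(z)x)} = \max_{\partial D_1 \times \dotsb} \abs{\ell(f(z)x)}$. Choosing $z^\ast$ realizing the maximum of $\norm{f}$ over $\cl D_1 \times \dotsb \times \cl D_N$, and then $x^\ast, \ell^\ast$ realizing $\norm{f(z^\ast)}$, one may interchange the supremum over $(x,\ell)$ with the maximum over $z$ to obtain $\max_{\cl D_1 \times \dotsb} \norm{f} = \sup_{x,\ell} \max_{\partial D_1 \times \dotsb} \abs{\ell(f(z)x)} \le \max_{\partial D_1 \times \dotsb} \norm{f}$; the reverse inequality is immediate, proving \eqref{eq:lemma:max_modulus_principle}.

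Finally, for the spectral radius, I would apply \eqref{eq:lemma:max_modulus_principle} to the analytic family $z \mapsto f(z)^n$ and use Gelfand's formula $\rho(A) = \inf_{n} \norm{A^n}^{1/n} = \lim_n \norm{A^n}^{1/n}$. Writing $K = \partial D_1 \times \dotsb \times \partial D_N$ and $r_n(z) = \norm{f(z)^n}^{1/n}$, the pointwise bound $\rho(f(z)) \le r_n(z)$ together with \eqref{eq:lemma:max_modulus_principle} applied to $f^n$ yields $\max_{\cl D_1 \times \dotsb} \rho(f) \le \max_K r_n$ for every $n$. To pass to the limit, I would exploit the submultiplicativity $r_{nm} \le r_n$ to pick a divisibility subsequence $n_k$ (e.g.\ $n_k = k!$) along which $(r_{n_k})_k$ is a decreasing sequence of continuous functions converging pointwise on the compact set $K$ to the continuous limit $\rho(f(\cdot))$; by Dini's theorem this convergence is uniform, so $\max_K r_{n_k} \to \max_K \rho(f)$, giving $\max_{\cl D_1 \times \dotsb} \rho(f) \le \max_K \rho(f)$, with the reverse inequality trivial. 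The main obstacle is precisely this last interchange of the maximum over $z$ with the limit in $n$: the pointwise inequality $\rho \le r_n$ does not by itself control the maxima, and it is the monotonicity along the divisibility subsequence, through Dini's theorem on the compact distinguished boundary, that supplies the needed uniformity.
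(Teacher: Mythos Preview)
Your argument is correct, but both halves take a different route from the paper. For \eqref{eq:lemma:max_modulus_principle}, the paper works directly with $\norm{f}$: it fixes a maximizer $z^*$ and, on each coordinate slice, invokes the maximum modulus principle for \emph{vector-valued} analytic functions (citing Dunford--Schwartz) to conclude that $\norm{g(\cdot)}$ is constant on $D_j$, hence the maximum can be moved coordinate by coordinate to $\partial D_1\times\dotsb\times\partial D_N$. You instead reduce to the \emph{scalar} case via the duality $\norm{A}=\sup_{\abs{x}\le 1,\,\norm{\ell}_\ast\le 1}\abs{\ell(Ax)}$, freeze variables at the scalar level, and then commute the two suprema. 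Your route is more elementary in that it avoids the vector-valued black box; the paper's is shorter once that black box is granted. For \eqref{eq:lemma:max_modulus_principle1}, both arguments apply the norm result to $f^n$ and use Gelfand's formula, but the paper argues by contradiction and quotes the \emph{uniform} convergence $\norm{A^n}^{1/n}\to\rho(A)$ on compact sets of matrices as a cited lemma. Your Dini argument along the divisibility subsequence $n_k=k!$ is effectively a self-contained proof of exactly that uniformity (restricted to the compact image $f(K)$): submultiplicativity gives the monotonicity, continuity of $\rho$ gives the continuous limit, and Dini closes the gap. So the two approaches are equivalent in strength; yours trades a citation for an explicit monotone-convergence step.
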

\begin{proof}
    Let $z^*=(z^*_1, \dotsc,z^*_N)\in \cl D_1\times \dotsb\times \cl D_N $ such that 
    \[
        \norm{f(z^*)}=\max_{z\in \cl D_1\times \dotsb\times \cl D_N} \norm{f(z)}.
    \]
    Let us show that there exists $\bar z= (\bar z_1, \dotsc,\bar z_N)\in \partial D_1\times \dotsb\times \partial D_N$ such that $\norm{f(z^*)}=\norm{f(\bar z)}$. For $j\in \{1, \dotsc,N\}$, if $z^*_j\in \partial D_j$ we set $\bar z_j:=z^*_j$, otherwise we define $g\colon C_j \to \mathcal{M}_d(\C)$ by
    \[g(\xi)=f(z^*_1, \dotsc,z^*_{j-1},\xi,z^*_{j+1}, \dotsc,z^*_N).\]
    So $g_{\rvert C_j}$ is analytic and, from the maximum modulus principle theorem for vector-valued functions (see \cite[Section~III.14, p.~203]{dunford1988linear}), we have that $\norm{g(\cdot)}$ is constant on $D_j$, hence for any $\bar z_j\in \partial D_j$, we have $\norm{g(\bar z_j)} = \norm{g(z^*_j)}$. If we apply this argument for any coordinate, we get the desired $\bar z= (\bar z_1, \dotsc,\bar z_N)\in \partial D_1\times \dotsb\times \partial D_N $ and this shows \eqref{eq:lemma:max_modulus_principle}.

    Suppose by contradiction that \eqref{eq:lemma:max_modulus_principle1} does not hold. We set
    \[\epsilon=\max_{z\in \cl D_1\times \dotsb\times \cl D_N}\rho(f(z)) -\max_{z\in \partial D_1\times \dotsb\times \partial D_N}  \rho(f(z))>0.\]
    From Gelfand's formula, we have  
    \[\lim_{n\to +\infty} \norm{A^n}^{\frac{1}{n}}=\rho(A),\]
    and the limit is uniform on compact sets of matrices (see, e.g., \cite[Lemma~4.1]{Colonius2019Decay}). Since $\bar D:= \cl D_1\times \dotsb \times \cl D_N$ is compact and $f$ is continuous, then $f(\bar D)$ is compact, so there exists $M\in \N$ such that
    \begin{equation}\label{eq:lemma:max_modulus_principle2}
        \abs*{\rho(f(z))-\norm{f(z)^n}^{\frac{1}{n}}} < \frac{\epsilon}{2}
    \end{equation}
    for every $n\geq M$ and $z\in \bar D$. Let $z^*\in \bar D$ be such that $\rho(f(z^*)) = \max_{z\in \bar D}\rho (f(z))$ and $z_*\in \partial D:= \partial D_1\times \dotsb\times \partial D_N$ such that $\rho(f(z_*)) = \max_{z\in \partial D}\rho (f(z))$. From \eqref{eq:lemma:max_modulus_principle2} and recalling that $\rho(f(z^*))=\rho(f(z_*))+\epsilon$ we have
    \begin{align*}
        \norm{f(z^*)^M}^\frac{1}{M}&>\rho(f(z^*))-\frac{\epsilon}{2}\\
        &=\rho(f(z_*))+\frac{\epsilon}{2}=\max_{z\in \partial D} \rho(f(z))+\frac{\epsilon}{2}\\
        &\geq \max_{z\in \partial D}\norm{f(z)^M}^\frac{1}{M}.
    \end{align*}
    Therefore $\norm{f(z^*)^M}>\max_{z\in \partial D}\norm{f(z)^M}$, which contradicts \eqref{eq:lemma:max_modulus_principle} applied to $F(z)=f(z)^M$.
\end{proof}

\bibliographystyle{abbrv}
\bibliography{main}

\end{document}